\newcommand{\N}{\mathbb{N}}
\newcommand{\R}{\mathbb{R}}
\newcommand{\p}{\mathbb{P}}
\newcommand{\E}{\mathbb{E}}
\newcommand{\En}{\mathcal{E}}
\newcommand{\tEn}{\widetilde{\En}}
\newcommand{\mM}{\mathcal{M}}
\newcommand{\bv}{\overline{v}}
\newcommand{\bg}{\overline{g}}
\newcommand{\bK}{\overline{K}}
\newcommand{\bA}{\overline{A}}
\newcommand{\TV}{\mathrm{TV}}
\newcommand{\vol}{\mathrm{vol}}
\newcommand{\Jac}{\mathrm{Jac}}
\newcommand{\Diff}{\mathrm{Diff}}
\newcommand{\Hol}{\mathrm{Hol}}
\newcommand{\Homeo}{\mathrm{Homeo}}
\newcommand{\Leb}{\mathrm{Leb}}
\newcommand{\supp}{\mathop{\mathrm{supp}}}
\newcommand{\Lip}{\mathop{\mathrm{Lip}}}
\newcommand{\fL}{\mathfrak{L}}
\newcommand{\br}{\bar{r}}
\newcommand{\bx}{\bar{x}}
\newcommand{\by}{\bar{y}}
\newcommand{\bz}{\bar{z}}
\newcommand{\dd}{\, \mbox{d}}
\newcommand{\SL}{\mathrm{SL}}
\newcommand{\mgr}{\mu}
\newcommand{\msp}{\nu}
\newcommand{\dm}{m}
\newcommand{\mTt}{\widehat{\Theta}}
\newcommand{\mTtn}{\widehat{\theta}}
\newcommand{\mTf}{\Theta'}
\newcommand{\mTfn}{\theta'}
\newcommand{\mTn}{\theta}
\newcommand{\dens}{\rho}
\newcommand{\mT}{\Theta}
\newcommand{\tdelta}{\tilde{\delta}}
\newcommand{\dn}{\delta_E}
\newcommand{\Cn}{C_E}
\newcommand{\tC}{\widetilde{C}}
\newcommand{\Ind}{\mathrm{1\!\!I}}
\newcommand{\eps}{\varepsilon}
\newcommand{\diam}{\mathrm{diam}}
\newtheorem{theorem}{Theorem}[subsection]
\newtheorem{lemma}[theorem]{Lemma}
\newtheorem{proposition}[theorem]{Proposition}
\newtheorem{corollary}[theorem]{Corollary}
\theoremstyle{definition}
\newtheorem{definition}[theorem]{Definition}
\newtheorem{remark}[theorem]{Remark}
\renewcommand\thesubsection{%
  \ifnum\c@subsection=0 \thesection.1%
  \else \thesection.\arabic{subsection}%
  \fi}
\numberwithin{equation}{subsection}
\begin{document}

\title{Log-H\"older regularity of stationary measures}

\date{\today}

\author{Grigorii Monakov}

\address{Grigorii Monakov, Department of Mathematics, University of California, Irvine, CA~92697, USA}

\email{gmonakov@uci.edu}

\begin{abstract}

    We consider Lipschitz and H\"{o}lder continuous random dynamical systems defined by a distribution with a finite logarithmic moment. We prove that under suitable non-degeneracy conditions every stationary measure must be $\log$-H\"{o}lder continuous.

\end{abstract}

\maketitle


\section{Introduction}

\subsection{Outline of the main results}

H\"{o}lder continuity is an important concept that appears naturally in various fields, which include but are not limited to: stochastic processes (see, for example, \cite{MP}), dynamical systems (see \cite{BP, BrK, BV, BBS} and references therein), and spectral theory (e.g. see \cite{CS, DG, GS, HV, Mu, V}).

Recently H\"{o}lder continuity was established for stationary measures of random dynamical systems under very general assumptions:

\begin{theorem}[Gorodetski, Kleptsyn, M., \cite{GKM}] \label{thm:hold}
    Let $M$ be a closed Riemannian manifold. Suppose that $\mgr$ is a probability distribution on $\Diff^1(M)$ such that $\int \|f\|_{\Diff^1}^\gamma d\mgr(f) < \infty$ for some $\gamma > 0$. Suppose also that there is no probability measure $m$ on the manifold $M$ invariant under every map $f \in \supp(\mgr)$. Then every stationary measure of a random dynamical system defined by the distribution $\mgr$ is H\"older continuous.
\end{theorem}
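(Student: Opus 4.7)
The plan is to establish the Hölder bound $\nu(B(x,r)) \le C r^{\alpha}$ by iterating the stationarity identity and extracting a strict multiplicative contraction from the non-degeneracy hypothesis. Writing $\sigma_n = f_n \circ \cdots \circ f_1$ with $f_i \sim \mgr$ i.i.d.\ and $F(r) := \sup_{x \in M} \nu(B(x,r))$, the starting point is
$$\nu(B(x,r)) \;=\; \E\bigl[\nu(\sigma_n^{-1}(B(x,r)))\bigr].$$
The goal is to prove a one-step inequality of the form $F(r) \le (1-\delta)\,F(r/\rho) + O(r^{\beta})$, with constants $\delta \in (0,1)$, $\rho \in (0,1)$, and $\beta > 0$ that are uniform in $r$. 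Iterating this at the geometric sequence of scales $\rho^k r_0$ then yields $F(\rho^k r_0) \le C(1-\delta)^k$, which is exactly the required Hölder estimate with $\alpha = -\log(1-\delta)/\log(1/\rho)$.

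The moment hypothesis $\int \|f\|_{\Diff^1}^{\gamma} d\mgr(f) < \infty$ would first be used to produce a large-deviation tail bound on $\|\sigma_n^{-1}\|_{\Lip}$. Integrability of $\log \|f\|_{\Diff^1}$ (a consequence of the polynomial moment) together with Kingman's subadditive theorem give that $\tfrac{1}{n}\log \|\sigma_n^{-1}\|_{\Lip}$ converges almost surely; the polynomial moment upgrades this to power-law tails, so that outside an event of $\mgr^n$-probability $O(r^{\beta})$ one has $\sigma_n^{-1}(B(x,r)) \subset B(\sigma_n^{-1}(x), r/\rho)$. The exceptional event contributes the $O(r^{\beta})$ term above, so the remaining task is to bound the non-exceptional contribution $\E[\nu(B(\sigma_n^{-1}(x), r/\rho))\,\Ind_{\text{good}}]$ by $(1-\delta)\,F(r/\rho)$.

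This last bound is the heart of the argument and the place where the non-degeneracy hypothesis enters. If no such uniform $\delta$ existed, one could find sequences of configurations in which the averaged pushforward $\int f_\ast m \, d\mgr(f)$ nearly fixes a probability measure $m$ concentrated on the sites of maximal $\nu$-mass at the relevant scale; a weak-$\ast$ compactness argument would then upgrade this to a genuine measure on $M$ invariant under every $f \in \supp(\mgr)$, contradicting the hypothesis. The main obstacle I would expect is making this compactness step quantitative and scale-uniform: one must ensure that $\delta$ does not degrade to zero as $r \to 0$. This typically requires careful control of the depth $n = n(r)$---balancing the Lipschitz tails of $\sigma_n^{-1}$ against the number of ``independent shuffling'' steps needed to spread a small ball---together with a uniform lower bound on the gap extracted from the support of $\mgr$.
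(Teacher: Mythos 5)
Your overall scheme (a one-step contraction $F(r)\le(1-\delta)F(r/\rho)+O(r^{\beta})$ for $F(r)=\sup_x\nu(B(x,r))$, with a compactness contradiction supplying the uniform gap $\delta$) is not the route taken in \cite{GKM} or in the present paper, and it contains a genuine gap at exactly the step you identify as the heart of the argument. The problem is the passage from ``the averaged pushforward nearly fixes a probability measure $m$'' to ``a measure invariant under every $f\in\supp(\mgr)$''. A fixed point of the averaging operator $m\mapsto \mgr*m=\int f_*m\,\dd\mgr(f)$ is a \emph{stationary} measure, not a common invariant one; stationary measures always exist on a compact manifold (Krylov--Bogolyubov), and the measure $\nu$ you are trying to estimate is itself such a fixed point. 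So producing, in the limit $r\to0$, a limit measure $m$ with $\mgr*m=m$ contradicts nothing. To reach a contradiction you must show that $f_*m$ is almost surely \emph{independent of $f$}, i.e.\ violate the \textbf{no deterministic image} condition, which by Proposition~\ref{prop:conv} is what the no-common-invariant-measure hypothesis yields for a suitable convolution power $\mgr^{*k}$. Your sup-based framework has no mechanism for extracting this almost-sure rigidity in $f$ from an inequality that has already been averaged over $f$.

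This is precisely what the energy method supplies and why the paper (following \cite{GKM}) uses it. One works with a quadratic energy $\En_{\alpha,\eps}(\nu)=\iint U_{\alpha,\eps}(d(x,z))\,\dd\nu\,\dd\nu$, rewritten as the squared $L^2(M,\Leb)$ norm of a density $\dens_{\alpha,\eps}[\nu]$. The Hilbert-space identity $\E\langle v-\bv,v-\bv\rangle=\E\langle v,v\rangle-\langle\bv,\bv\rangle$ (Lemma~\ref{l:var}) shows that if the energy of $\mgr*\nu$ fails to contract relative to $\E_{\mgr}[\En_{\alpha,\eps}(f_*\nu)]$, then the variance of $f\mapsto\dens_{\alpha,\eps}[f_*\nu]$ is small, hence $f_*\nu$ is close to a single deterministic measure for $\mgr$-most $f$; the weak-* compactness argument then genuinely contradicts the no-deterministic-image condition. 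The bound $\nu(B_r(x))\le Cr^{\alpha}$ is then read off from a uniform energy bound via a Chebyshev/Frostman-type inequality (the analogue of Lemma~\ref{lm:EnLowerBound}), rather than by iterating a pointwise inequality for $F(r)$. Your large-deviation control of $\|\sigma_n^{-1}\|_{\Lip}$ under the polynomial moment is plausible as far as it goes, but without replacing the supremum functional by a quantity for which averaging over $f$ can only decrease the value --- and for which near-equality forces deterministic images --- the key one-step inequality remains unproved.
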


This theorem gives rise to a series of natural questions, that are known to be important in that area. Namely, one can ask
\begin{itemize}
    
    \item What do we get if we impose a weaker condition on the tails of distribution~$\mgr$? For instance, if we only assume that $\int (\log \|f\|_{\Diff^1})^\gamma d\mgr(f) < \infty$.

    \item Is it possible to weaken the regularity assumption on the dynamical system? For example, what if we only assume that homeomorphisms $f \in \supp(\mgr)$ are H\"{o}lder continuous and not differentiable?

\end{itemize}

The aim of present paper is to address these questions. Our main results are the following two theorems (for rigorous formulations and further discussion see Section~\ref{sec:results}):

\begin{theorem}
    Let $\mgr$ be a probability measure on $\Lip(M)$ -- space of bi-Lipschitz homeomorphisms of $M$, satisfying the following assumptions:
    \begin{itemize}
        \item Lipschitz constant has finite $\alpha$-logarithmic moment with respect to $\mgr$.
        \item There is no measure $m$ on $M$, such that $f_* m = m$ for $\mgr$-a.e.~$f$.
    \end{itemize}
    Then every $\mgr$-stationary measure is $\alpha/2$-$\log$-H\"older.
\end{theorem}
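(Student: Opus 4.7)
I will follow the general architecture of the proof of Theorem~\ref{thm:hold} from \cite{GKM}, replacing each polynomial-moment tail estimate by its logarithmic-moment analogue, with the conclusion correspondingly weakening from H\"older to log-H\"older. Set $\phi(r) := \sup_{x \in M} \nu(B(x,r))$; the goal is $\phi(r) \le C/|\log r|^{\alpha/2}$ for small $r$. Because $g_n^{-1}$ is $L_{g_n^{-1}}$-Lipschitz, one has $g_n^{-1} B(x,r)\subseteq B(g_n^{-1}(x), L_{g_n^{-1}} r)$, and hence by stationarity
\begin{equation*}
    \phi(r) \le \int \phi\bigl(L_{g_n^{-1}}\cdot r\bigr)\, d\mgr^{\otimes n}(f_\bullet), \qquad g_n := f_n \circ \cdots \circ f_1,
\end{equation*}
together with $\log L_{g_n^{-1}} \le \sum_{i=1}^{n} \log L_{f_i^{-1}}$. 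This identity gives no decay on its own and must be coupled with a contraction mechanism supplied by the non-invariance hypothesis.

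The first and conceptually hardest step is to convert the qualitative ``no common invariant measure'' hypothesis into a uniform \emph{quantitative spreading} statement: roughly, that there exist $n_0 \in \N$ and $\varepsilon_0, p_0 > 0$ such that, for every probability measure $m$ on $M$ concentrated in an $\varepsilon_0$-ball, the random push-forward $(g_{n_0})_* m$ escapes any prescribed $\varepsilon_0$-ball with $\mgr^{\otimes n_0}$-probability at least $p_0$. The natural strategy is a soft compactness argument in $\mathcal{P}(M)$: the failure of such a uniform statement would, after extracting a weak limit, yield a measure invariant under $\mgr$-a.e.\ $f$, contradicting the hypothesis. Because we work purely metrically (bi-Lipschitz, not $C^1$), the execution is technically more involved than in \cite{GKM}, and this is the principal obstacle of the whole argument.

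The remainder is quantitative bookkeeping. The finite $\alpha$-logarithmic moment yields a heavy-tailed-sum tail bound of the form $\mgr^{\otimes n}\bigl[\log L_{g_n^{-1}} > T\bigr] \le c n / T^{\alpha}$. Splitting the stationarity identity over a ``good'' event (scales bounded by $e^T$ and the spreading of Step~1 is effective) and the complementary ``bad'' event produces a recursion of the schematic form $\phi(r) \le (1-p_0)\phi(r e^{T}) + c n/T^{\alpha}$. Iterating this $k$ times while respecting $kT \asymp |\log r|$ and optimizing over the free parameters $n$, $T$, $k$ (heuristically $T\asymp |\log r|/\log|\log r|$, $k \asymp \log|\log r|$, $n \asymp |\log r|^{\alpha/2}$) forces both the geometric contribution $(1-p_0)^k$ and the accumulated tail $k\cdot cn/T^\alpha$ to decay like $|\log r|^{-\alpha/2}$, yielding the announced bound. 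The exponent $\alpha/2$ is thus the outcome of a square-root-type balance between a polynomial tail of index $\alpha$ and a polynomially growing number of iterations; once Step~1 is in hand, the remaining manipulations are expected to be technical but essentially routine adaptations of the scheme of \cite{GKM}.
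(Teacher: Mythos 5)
Your proposal takes a genuinely different route from the paper (which proves this statement via a potential-theoretic energy $\En_{\alpha,\eps}(\msp)=\iint U_{\alpha,\eps}(d(x,z))\,\dd\msp(x)\dd\msp(z)$, an $L^2$ variance identity, and a compactness/contraction argument as in Proposition~\ref{prop:contr}, the exponent $\alpha/2$ coming from Cauchy--Schwarz in Lemma~\ref{lm:EnLowerBound}), but the route you propose has a genuine gap at its central step: the bridge between your quantitative spreading statement and the claimed recursion for $\phi(r)=\sup_x\nu(B(x,r))$. Stationarity gives $\nu(B(x,r))=\E\big[\nu(g_n^{-1}B(x,r))\big]$, and to gain the factor $(1-p_0)$ you must show that, with probability at least $p_0$, the \emph{random} set $g_n^{-1}B(x,r)$ --- which is contained in a ball of radius $e^{T}r$ around the random point $g_n^{-1}(x)$ --- carries $\nu$-mass at most $(1-c)\phi(e^{T}r)$. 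Your Step~1 controls the forward image of a measure concentrated in a ball of the \emph{fixed} scale $\eps_0$, chosen \emph{before} the randomness; here the relevant concentrated measure would be $\nu$ restricted to $B(g_n^{-1}(x),e^{T}r)$, which depends on the sampled maps, lives at a scale $e^{T}r\to0$ rather than $\eps_0$, and must be compared against the supremum $\phi(e^{T}r)$ over \emph{all} balls of that scale. Decomposing $\nu$ into pieces supported on small balls and applying the spreading to each piece does not close the argument either: the pieces' bad events need not be compatible, and each piece contributes its full mass on its own bad event. This mismatch between a sup-type quantity and an $f$-average is precisely what the energy method is designed to circumvent: the energy is an average, is quadratic in $\msp$ (so Hilbert-space geometry applies via $\dens_{\alpha,\eps}[\msp]$), and the contraction is extracted by a compactness argument on the normalized densities $\mTn_{\alpha,\eps}$, with the \textbf{no invariant measure} hypothesis entering only through the limiting deterministic-image contradiction.

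Two secondary red flags. First, your parameter bookkeeping does not actually produce $\alpha/2$: with the recursion as written and $n=n_0$ fixed (there is no reason for $n$ to grow), taking $k\asymp\log|\log r|$ with a large constant makes the geometric term decay faster than any power of $|\log r|$ while the accumulated tail is of order $|\log r|^{-\alpha}$ up to polylogarithmic factors, so your scheme would ``prove'' the exponent $\alpha$ rather than $\alpha/2$ --- a sign that the recursion is too strong to be derivable. In the paper the loss from $\alpha$ to $\alpha/2$ is structural (the energy is quadratic in $\msp$). Second, the tail bound $\mgr^{\otimes n}[\log L_{g_n^{-1}}>T]\le cn/T^{\alpha}$ is correct only for $\alpha\le1$; for $\alpha>1$ one gets $c(n/T)^{\alpha}$. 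Neither of these is fatal by itself, but the first step's gap is.
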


\begin{theorem}
    Let $\mgr$ be a probability measure on $\Hol(M)$ -- space of bi-H\"{o}lder homeomorphisms of $M$, satisfying the following assumptions:
    \begin{itemize}
        \item H\"{o}lder constant has a finite logarithmic moment with respect to $\mgr$.
        \item There is no measure $m$ on $M$, such that $f_* m = m$ for $\mgr$-a.e.~$f$.
    \end{itemize}
    Then every $\mgr$-stationary measure is $\log$-H\"older.
\end{theorem}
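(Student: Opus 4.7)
\noindent\textit{Proof plan.} The plan is to mimic the strategy for the preceding (Lipschitz) theorem, carrying over the overall architecture and making the quantitative adjustments required by the weaker bi-H\"older regularity. I envisage four steps.

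First, I would establish a \emph{local contraction lemma}: using the no-common-invariant-measure hypothesis, derive the existence of $N \in \N$, $\lambda \in (0,1)$, $p > 0$, and $\epsilon_0 > 0$ such that for every $x \in M$,
\[
\mgr^{\otimes N}\bigl\{(f_1,\dots,f_N) : \diam\bigl(f_N\circ\cdots\circ f_1(B(x,\epsilon_0))\bigr) < \lambda\epsilon_0\bigr\} \ge p.
\]
Lemmas of this type are known from the Lipschitz setting; their proofs rely only on topological properties of $\supp(\mgr)$ together with the absence of a common invariant measure, and should adapt without essential change to the bi-H\"older category.

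Second, I would decompose time into blocks of length $N$. On each block a contraction event (shrinkage by factor $\lambda$) occurs independently with probability at least $p$, while on the complementary event the expansion of the image of a small ball is bounded by the block's random H\"older data. Third, using the finite logarithmic moment assumption, I would estimate the escape time $T(r) := \inf\{k : d_k \ge \epsilon_0\}$ of the associated scale process, showing $T(r) \le C |\log r|$ with probability at least $1 - C'/|\log r|^c$ for some $c > 0$; the key input is a Markov-type tail bound applied to the cumulative log-H\"older random walk. Fourth, I would substitute into the stationarity identity $\nu(B(x,r)) = \E\bigl[\nu\bigl((f_n\circ\cdots\circ f_1)^{-1}(B(x,r))\bigr)\bigr]$ with $n$ on the order of $|\log r|$, splitting the expectation into a typical event (with small preimage) and an exceptional one (controlled by the moment bound), to obtain $\nu(B(x,r)) \le C''/|\log r|^c$.

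The main obstacle I anticipate is the \emph{loss of H\"older exponent under composition}: iterating $n$ bi-H\"older maps yields a map with H\"older exponent $\prod_{i=1}^n \beta_i$, which decays geometrically in $n$. Consequently any naive diameter bound of the form $\diam\bigl((f_n\circ\cdots\circ f_1)^{-1}(B(x,r))\bigr) \le C \cdot r^{\prod \beta_i}$ becomes vacuous as soon as $\prod \beta_i \lesssim 1/|\log r|$. To overcome this one must ensure that the contraction in Step~(1) operates at the fixed macroscopic scale $\epsilon_0$, so that the exponent loss does not enter the dominant quantitative estimate, and must carefully balance the block length $N$, the contraction probability $p$, and the factor $\lambda$ against the random H\"older data. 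Making this balance precise---in particular, checking that the first logarithmic moment indeed suffices---is the technical heart of the argument.
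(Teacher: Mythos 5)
Your Step 1 is where the argument breaks down: the local contraction lemma you posit does not follow from the absence of a common invariant measure once $\dim M \ge 2$, and is in fact false in that generality. Take $M = S^1 \times S^1$ and let $\mgr$ be supported on finitely many maps of the form $f(x,y) = (f_1(x), R(y))$, where $R$ is a fixed rotation and the maps $f_1$ admit no common invariant measure on the first circle (e.g.\ two north--south diffeomorphisms with disjoint fixed-point sets). Since any common invariant measure would project to a common invariant measure for the $f_1$'s, the product system satisfies the hypotheses; yet every composition maps each fibre $\{x\}\times S^1$ isometrically, so $\diam\bigl(f_N\circ\cdots\circ f_1(B(x,\eps_0))\bigr) \ge 2\eps_0$ almost surely and your contraction event has probability zero for every $N$, $\lambda<1$, $\eps_0$. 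Local-contraction statements of the kind you invoke (Antonov, Malicet) are special to one-dimensional dynamics or require proximality-type hypotheses; here no such lemma is available, which is precisely why the paper does not argue through shrinking of balls. Even granting Step 1, the obstacle you correctly identify --- the geometric decay of the composite H\"older exponent $\prod_i\gamma(f_i)$, which makes preimage-diameter bounds vacuous at scale $r$ once $n\sim|\log r|$ --- is acknowledged but not resolved in your sketch, so the proposal is incomplete on its own terms as well.

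The paper's route avoids tracking balls altogether. It introduces a Frostman-type energy $\En_{\alpha,\eps}(\msp) = \iint U_{\alpha,\eps}(d(x,z))\,\dd\msp(x)\,\dd\msp(z)$ whose kernel has singularity $|\log r|^{\alpha}$, so that a uniform energy bound yields the $\log$-H\"older estimate by a Markov-type inequality (Lemma \ref{lm:EnLowerBound}). A single bi-H\"older map changes this energy by at most the factor $\bigl(2(1+\log L(f))/\gamma(f)\bigr)^{\alpha}$, whose $\mgr$-expectation is close to $1$ for small $\alpha$ by the logarithmic moment hypothesis; the decisive step is an $L^2$ variance identity for the densities $\dens_{\alpha,\eps}$, showing that $\mgr*\msp$ has strictly smaller energy than $\msp$ unless the normalized measures $\mTn_{\alpha,\eps}(f_*\msp)$ are nearly independent of $f$, and a compactness argument converts persistent failure of contraction into a measure with a deterministic image, contradicting the hypothesis. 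Iterating the resulting contraction $\En_{\alpha,\eps}(\mgr*\msp)<\max(\lambda\En_{\alpha,\eps}(\msp),\tC)$ down to scale $\eps=\lambda^{n/k}$ finishes the proof. If you wish to keep a contraction-based outline, the object that contracts must be the energy, not the diameter of balls.
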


\subsection{Applications} \label{subsection:applications}

Stationary measures of random dynamical systems are analogues of invariant measures of  deterministic maps, and their properties are of crucial importance for many results in random dynamics, see \cite{A, BH, BQ, BL, Fu, Fu1, Kif1, Kif2, LQ, M} and references therein.

Various results regarding regularity of stationary measures are known. A well studied special case is action of random projective maps on a projective space.

Let $A_1, A_2, \ldots$ be i.i.d. random matrices from $\SL(k, \R)$, distributed with respect to measure $\mgr$. Every $A \in \SL(k, \R)$ induces a projective map $f_{A}: \mathbb{RP}^{k-1}\to \mathbb{RP}^{k-1}$. Denote by $S_{\mgr}$ the closed semigroup in $\SL(k, \R)$ generated by matrices from $\supp(\mgr)$. Under certain irreducibility assumptions (which we won't define rigorously since they are beyond the scope of our paper) on $S_{\mgr}$ Guivarc'h was able to prove the following regularity result for the stationary measure:

\begin{theorem}[Guivarc’h, \cite{Gu}]\label{t.g}
    Suppose that, in the setting above, $S_{\mgr}$ is strongly irreducible and proximal, and
    \begin{equation*}
        \E_{\mgr} \|A_1\|^{\gamma} = \int_{\SL(k, \R)} \|A\|^{\gamma} \dd \mgr(A) < \infty
    \end{equation*}
    for some $\gamma > 0$. Then the corresponding random dynamical system on $\mathbb{RP}^{k-1}$ has unique stationary measure $\msp$ and $\msp$ is H\"older continuous.
\end{theorem}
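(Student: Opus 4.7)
The plan is to follow the Furstenberg--Le Page strategy. First, existence of a stationary measure $\msp$ on $\mathbb{RP}^{k-1}$ follows from Markov--Kakutani applied to the compact space. Uniqueness comes from the combination of strong irreducibility and proximality: together these yield Furstenberg's simplicity of the top Lyapunov exponent ($\lambda_1 > \lambda_2$) and contraction of the projective dynamics onto a random point $Z(\omega) \in \mathbb{RP}^{k-1}$ for almost every trajectory, so any stationary measure must coincide with the law of $Z$.

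The core step is a quantitative contraction estimate. For $s > 0$ small enough that $\E\|A_1\|^s < \infty$, consider the family of transfer-type operators
\begin{equation*}
    P_s \phi(\bar v) = \int \frac{\|A v\|^s}{\|v\|^s} \, \phi\bigl(\overline{A v}\bigr) \, d\mgr(A)
\end{equation*}
acting on the Banach space of $\beta$-H\"older functions on $\mathbb{RP}^{k-1}$ for a small $\beta > 0$. Using strong irreducibility and proximality, I would establish a Doeblin--Fortet inequality showing that $P_s$ is quasi-compact with a spectral gap, which delivers Le Page's exponential moment bound
\begin{equation*}
    \sup_{v \in \R^k \setminus \{0\}} \E\left[\frac{\|v\|^s}{\|A_n \cdots A_1 \, v\|^s}\right] \leq C \, e^{-\delta n}
\end{equation*}
for some $\delta = \delta(s) > 0$.

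With this in hand, H\"older continuity of $\msp$ follows by a standard argument. Using stationarity one writes $\msp(B(\bar x, r)) = \int \msp(A^{-1} B(\bar x, r)) \, d\mgr^{*n}(A)$. A projective computation bounds the radius of $A^{-1} B(\bar x, r)$ in terms of $r$ and ratios of the form $\|v\|/\|A^{-1} w\|$ for appropriate lifts, quantities that are controlled by the Le Page estimate applied to the transposed cocycle (which satisfies the same hypotheses). Splitting the $\mgr^{*n}$-integral into a good regime, where the preimage ball has small radius and the measure is bounded inductively, and a bad regime of exponentially small $\mgr^{*n}$-mass, and optimizing by choosing $n \sim C \log(1/r)$, yields $\msp(B(\bar x, r)) \leq C r^\alpha$ for some $\alpha > 0$.

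The main technical obstacle is establishing the spectral gap for $P_s$. Strong irreducibility rules out near-invariance of $P_s$ on functions concentrated on proper projective subvarieties, and proximality supplies the uniform one-point contraction needed to close the Doeblin--Fortet argument. Verifying the inequality uniformly on pairs of points in $\mathbb{RP}^{k-1}$ while simultaneously keeping track of the norm-ratio weights $\|Av\|^s/\|v\|^s$ is the delicate part of the proof, and it is where the precise interplay between the moment hypothesis and the algebraic assumptions on $S_{\mgr}$ enters.
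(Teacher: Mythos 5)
This statement is quoted background: the paper attributes it to Guivarc'h \cite{Gu} and gives no proof of its own, so there is no internal argument to compare yours against. Judged on its own terms, your outline is the standard Furstenberg--Le Page--Guivarc'h route and is correct as a roadmap: existence by averaging/fixed point on the compact convex set of probability measures, uniqueness from the martingale convergence of $(A_1\cdots A_n)_*\msp$ to a random Dirac mass forced by proximality and strong irreducibility, and regularity from the Le Page moment estimate $\sup_v \E\bigl[\|v\|^s\,\|A_n\cdots A_1 v\|^{-s}\bigr]\le Ce^{-\delta n}$. Two remarks. First, the entire analytic weight of the theorem sits in the quasi-compactness/Doeblin--Fortet step for $P_s$, which you only describe; as written this is a plan, not a proof, and you should at least state precisely which Lasota--Yorke inequality you need and why proximality gives the required contraction of the projective distance on average. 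Second, Guivarc'h's own passage from the Le Page bound to H\"older continuity is cleaner than your inductive good/bad splitting with $n\sim\log(1/r)$: one shows directly that $\sup_{\by}\int \delta(\bx,\by)^{-s}\,\dd\msp(\bx)<\infty$ for the projective distance $\delta$ (using the identity relating $\delta(\overline{Ax},\by)$ to $\delta(\bx,\overline{A^{T}y})$ and the transposed cocycle, exactly the mechanism you invoke), after which $\msp(B(\by,r))\le r^{s}\sup_{\by}\int\delta(\bx,\by)^{-s}\dd\msp(\bx)$ is immediate by Chebyshev. Your version can be made to work, but the uniform-moment formulation avoids the induction entirely.
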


A regularity estimate similar to the one in Theorem \ref{t.g} is one of the key ingredients in the proof of the following Central Limit Theorem for matrix products:

\begin{theorem}[Le Page, \cite{L}, Guivarc'h, Raugi, \cite{GR}, Gol'dshe\u{\i}d,  Margulis, \cite{GM}] \label{t.LP}
    Let $\{A_k, k \ge 1\}$ be independent and identically distributed random matrices in $SL(k, \R)$, distributed with respect to measure $\mgr$. Assume that $S_{\mgr}$ is \textit{proximal} and \textit{strongly irreducible} and
    \begin{equation} \label{ExpMom}
        \E_{\mgr} \|A_1\|^{\gamma} = \int_{\SL(k, \R)} \|A\|^{\gamma} \dd \mgr(A) < \infty
    \end{equation}
    for some $\gamma > 0$. Then there exists $a > 0$ such that the random variables
    \begin{equation*}
        \frac{\log \|A_n\ldots A_1\|-n\lambda_F}{\sqrt{n}},
    \end{equation*}
    where $\lambda_F > 0$ is the Lyapunov exponent, converge in distribution to $\mathcal{N}(0,a^2)$.
\end{theorem}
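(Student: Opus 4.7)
The strategy is the classical Furstenberg--Le Page approach: convert the norm growth into a Birkhoff-type sum over a Markov chain on $\mathbb{RP}^{k-1}$, then extract the CLT from a spectral gap of the associated Markov operator acting on a Hölder space. First I would fix a unit vector $v \in \R^k$ and write
\begin{equation*}
    \log \|A_n \cdots A_1 v\| = \sum_{j=1}^{n} \sigma(A_j, x_{j-1}), \qquad \sigma(A, x) := \log \|A \bar{v}_x\|,
\end{equation*}
where $x_j = [A_j \cdots A_1 v] \in \mathbb{RP}^{k-1}$ and $\bar{v}_x$ is a unit representative of $x$. Under $\mgr \otimes \delta_{x_0}$ the sequence $(x_j)$ is a Markov chain with transition operator $Pf(x) = \int f(Ax) \, d\mgr(A)$, which by Theorem~\ref{t.g} admits a unique $\mgr$-stationary measure $\msp$, and $\lambda_F = \int \sigma \, d\mgr \, d\msp$ is the top Lyapunov exponent. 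The error $\log \|A_n \cdots A_1\| - \log \|A_n \cdots A_1 v\|$ is controlled by $-\log |\sin \angle(v, V_n)|$ for the top singular direction $V_n$; under proximality this error stays $O(1)$ in probability with tails that are negligible on the $\sqrt{n}$ scale, so it suffices to prove a CLT for the cocycle sum.

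The core analytic input is a spectral gap for $P$ on the space $C^\beta(\mathbb{RP}^{k-1})$ of $\beta$-Hölder functions for some $\beta > 0$: there exist $\rho < 1$ and $C$ such that $\|P^n(f - \msp(f))\|_{C^\beta} \le C \rho^n \|f\|_{C^\beta}$. This follows from the combination of (i)~Hölder regularity of $\msp$ (Theorem~\ref{t.g}) and (ii)~the projective contraction estimate $\E_\mgr[\,\mathrm{dist}(Ax, Ay)^\beta\,] \le \kappa \cdot \mathrm{dist}(x, y)^\beta$ for small $\beta$ and some $\kappa < 1$, a consequence of positivity of the top Lyapunov exponent together with strong irreducibility and proximality. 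The exponential moment hypothesis~\eqref{ExpMom} is used precisely to make these $\beta$-moments of $\|A\|^\beta$ finite.

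Having the spectral gap, I would run the Nagaev--Guivarc'h perturbation scheme. Define the twisted operators
\begin{equation*}
    P_t f(x) = \int e^{i t \sigma(A, x)} f(Ax) \, d\mgr(A), \quad t \in \R,
\end{equation*}
acting on $C^\beta(\mathbb{RP}^{k-1})$. Since $P_0 = P$ has a simple dominant eigenvalue $1$ separated from the rest of the spectrum, analytic perturbation theory yields, for $t$ in a neighbourhood of $0$, a simple dominant eigenvalue $\lambda(t)$ with $\lambda(0) = 1$, $\lambda'(0) = i \lambda_F$ and $\lambda''(0) = -a^2$ for some $a^2 \ge 0$. A standard computation gives
\begin{equation*}
    \E\bigl[\,e^{i t (\log \|A_n \cdots A_1 v\| - n \lambda_F)/\sqrt{n}}\,\bigr] = \lambda(t/\sqrt{n})^n \cdot e^{-i t \sqrt{n} \lambda_F} \cdot (1 + o(1)) \xrightarrow[n \to \infty]{} e^{-a^2 t^2/2},
\end{equation*}
and Lévy's continuity theorem concludes. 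Non-degeneracy $a^2 > 0$ is obtained by checking that $\sigma - \lambda_F$ is not a coboundary in $C^\beta$: if it were, the cocycle would be cohomologous to a constant, contradicting proximality via a standard argument on contracting elements in $S_\mgr$.

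The main obstacle is the spectral gap step: the contraction estimate requires delicate use of proximality (to see that typical products contract projective distances exponentially) combined with the irreducibility-based exclusion of invariant subspaces, and must be matched with the Hölder exponent chosen both for $\msp$ and for the operator norm. Once the spectral gap is in hand on an appropriate Hölder space, the remaining perturbative analysis and identification of $a^2$ are comparatively routine.
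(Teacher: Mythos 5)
This theorem is quoted in the paper as classical background (attributed to Le Page, Guivarc'h--Raugi, and Gol'dshe\u{\i}d--Margulis) and is not proved there, so there is no internal proof to compare against; your sketch follows the standard route of the cited references, namely Le Page's spectral-gap method combined with the Nagaev--Guivarc'h perturbation scheme. As an outline it is correct and identifies the right ingredients (reduction to a cocycle sum over the projective Markov chain, contraction of $P$ on a H\"older space $C^\beta$ for small $\beta$, analytic perturbation of the twisted operators $P_t$, and the coboundary argument for $a^2>0$), though each of the steps you flag as ``routine'' --- in particular the uniform control of $\log\|A_n\cdots A_1\|-\log\|A_n\cdots A_1v\|$ on the $\sqrt{n}$ scale and the verification that the dominant eigenvalue stays isolated under the twist --- carries real technical weight in the original sources.
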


It is known that assumption (\ref{ExpMom}), which is usually referred to as \textit{finite exponential moment}, is not optimal for the Central Limit Theorem. Recently Benuist and Quint were able to improve it in the following way: 

\begin{theorem}[Benuist, Quint, \cite{BQ16}]
    Let $\{A_k, k \ge 1\}$ be independent and identically distributed random matrices in $SL(k, \R)$, distributed with respect to measure $\mgr$. Assume that $S_{\mgr}$ is \textit{proximal} and \textit{strongly irreducible} and
    \begin{equation} \label{LogMom}
        \E_{\mgr} \left[ (\log \|A_1\|)^2 \right] = \int_{\SL(k, \R)} (\log \|A\|)^2 \dd \mgr(A) < \infty.
    \end{equation}
    Then there exists $a > 0$ such that the random variables
    \begin{equation*}
        \frac{\log \|A_n\ldots A_1\|-n\lambda_F}{\sqrt{n}},
    \end{equation*}
    where $\lambda_F > 0$ is the Lyapunov exponent, converge in distribution to $\mathcal{N}(0,a^2)$.
\end{theorem}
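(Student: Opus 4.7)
The plan is to run the classical Gordin martingale--coboundary method on the cocycle CLT for the projective action on $\mathbb{RP}^{k-1}$, the subtlety being that the hypothesis $\E_\mgr[(\log\|A\|)^2]<\infty$ rules out the standard spectral-gap approach on H\"older spaces and forces one to use a softer regularity of the stationary measure. First I would pass from the operator norm to the additive cocycle $\sigma(g,\bar v)=\log(\|gv\|/\|v\|)$ on $\mathbb{RP}^{k-1}$; proximality and strong irreducibility imply that $A_n\cdots A_1$ aligns exponentially fast along a unique random direction, so that $\log\|A_n\cdots A_1\|-\log\|A_n\cdots A_1 v_0\|=O(1)$ a.s.\ for a generic fixed $v_0$, in particular $o(\sqrt n)$ a.s. This reduces the problem to the CLT for $S_n=\sum_{i=0}^{n-1}\sigma(A_{i+1},X_i)$ along the Markov chain $X_i=\overline{A_i\cdots A_1 v_0}$, whose unique stationary measure is the Furstenberg measure $\msp$, with drift $\lambda_F=\int\sigma\,d\mgr\,d\msp$.

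Letting $P\phi(\bar v)=\int\phi(\overline{gv})\,d\mgr(g)$ and $\bar\sigma_\mgr(\bar v)=\int\sigma(g,\bar v)\,d\mgr(g)-\lambda_F$, I would then solve the Poisson equation $\phi-P\phi=\bar\sigma_\mgr$ with $\phi\in L^2(\msp)$. Given such $\phi$,
\begin{equation*}
S_n-n\lambda_F=M_n+\phi(X_0)-\phi(X_n),\qquad M_n=\sum_{i=1}^n\bigl[\phi(X_i)-P\phi(X_{i-1})\bigr],
\end{equation*}
decomposes the centered cocycle sum as a square-integrable stationary ergodic martingale plus a boundary term that is $o(\sqrt n)$ in probability (by Chebyshev and stationarity of $\phi(X_n)$); Brown's martingale CLT then yields $M_n/\sqrt n\Rightarrow\mathcal N(0,a^2)$ with $a^2=\|\phi\|_{L^2(\msp)}^2-\|P\phi\|_{L^2(\msp)}^2$. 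Non-vanishing $a>0$ is standard under proximality and strong irreducibility, via the fact that $\bar\sigma_\mgr$ is not a measurable coboundary.

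The main obstacle is the $L^2$-solvability of the Poisson equation. The projective action of $A$ is bi-Lipschitz with constant bounded by a power of $\|A\|\cdot\|A^{-1}\|$, so the hypothesis $\E_\mgr[(\log\|A\|)^2]<\infty$ supplies a finite logarithmic moment of the Lipschitz constant, and the first theorem of this paper forces $\msp$ to be log-H\"older. This regularity is what is needed to control the integrals $\int\sigma(g,\bar v)\,d\msp(\bar v)$ and their iterates: combined with the contraction of projective distances along random products, it yields a quantitative decay of $\|P^n\bar\sigma_\mgr\|_{L^2(\msp)}$ summable enough to verify Gordin's criterion $\sum_n\|\E[\bar\sigma_\mgr(X_n)\mid\mathcal F_0]\|_{L^2}<\infty$, and hence to construct $\phi$ as the sum of the Neumann series. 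Everything else is soft once this step is in hand; the proof then concludes by combining the martingale CLT with the a.s.\ reduction from $\log\|A_n\cdots A_1\|$ to $S_n$ performed in the first step.
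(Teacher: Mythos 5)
First, a point of order: the paper does not prove this theorem. It is quoted from Benoist--Quint \cite{BQ16} purely as background motivation for studying log-H\"older regularity, so there is no proof in the paper to compare yours against; I can only assess your proposal on its own terms against the argument in \cite{BQ16}.

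Your architecture (reduction to the norm cocycle, Poisson equation, martingale CLT, non-degeneracy via the non-coboundary property) is indeed the skeleton of the Benoist--Quint proof, and you have correctly identified log-regularity of the Furstenberg measure as the crucial input. But the step where you produce the $L^2$ solution $\phi$ of the Poisson equation has a genuine gap, in two respects. (i) You propose to verify Gordin's criterion $\sum_n\|P^n\bar\sigma_{\mgr}\|_{L^2(\msp)}<\infty$ by combining log-H\"older regularity with ``contraction of projective distances along random products.'' Under only a finite second logarithmic moment there is no spectral gap and no known summable decay rate for $P^n$ applied to the drift function; this summability is not a soft consequence of contraction plus regularity, and it is not how \cite{BQ16} proceeds. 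They instead exhibit the coboundary \emph{explicitly}: $\bar\sigma_{\mgr}-\lambda_F=\psi-P\psi$ with $\psi$ an integral of $\log\delta(x,y)$ against the stationary measure on a dual flag space, and the entire difficulty is the $L^2(\msp)$ bound on $\psi$, which reduces to the integrated estimate $\sup_y\int|\log\delta(x,y)|^2\,\dd\msp(x)<\infty$ (their Proposition 4.5). (ii) Even granting the explicit coboundary, the regularity supplied by the present paper is quantitatively too weak to close your argument: a finite second logarithmic moment of the Lipschitz constant yields, via Theorem \ref{thm:mainLip-1} with $\alpha=2$, only $\msp(B_r(y))\le C|\log r|^{-1}$, and a measure with exactly this ball decay can have $\int|\log d(x,y)|^2\,\dd\msp(x)=+\infty$ (substitute $r=e^{-\sqrt{t}}$ and note that $\int_1^\infty t^{-1/2}\,\dd t$ diverges). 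So citing ``the first theorem of this paper'' cannot supply the needed $L^2$ bound; the estimate of \cite[Proposition 4.5]{BQ16} is strictly stronger than the pointwise log-H\"older bound proved here and is obtained by a different mechanism. The remaining steps (the a.s.\ comparison of $\log\|A_n\cdots A_1\|$ with the cocycle sum, the martingale CLT, positivity of $a$) are indeed standard as you say.
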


Condition (\ref{LogMom}), which is usually referred to as \textit{finite second moment} is known to be optimal. In order to establish this more general result (as well as other limit theorems in the same setting) the authors proved a form of log-H\"{o}lder regularity of the stationary measure (see \cite[Proposition 4.5]{BQ16} for details) under \textit{finite second moment} assumption. This result illustrates the importance of log-H\"{o}lder continuity for proving limit theorems with optimal assumptions. At the end of the paper (see Appendix \ref{appendix:B}) we provide a slightly more precise and technical result (in the spirit of $\log$-H\"older continuity) about interaction of two measures generated by non-stationary random dynamical systems. That statement allows us to prove a non-stationary Central Limit Theorem for products of random $\SL(2, \R)$ matrices with optimal assumption on the tails (finite $2 + \eps$ moment for any $\eps > 0$). The proof will be included in the next revision of \cite{GKM2}. We are grateful to the anonymous reviewer for pointing out an unnecessary overestimate in the first version of this work. The resulting improvement was the last missing piece in proving optimal non-stationary Central Limit Theorem.


Log-H\"older regular measures appears naturally in other problems as well. For example, in \cite{CS} Craig and Simon proved that integrated density of states for discrete Schr\"{o}dinger operators with ergodic potentials is $\log$-H\"{o}lder. Curiously, the assumption they found sufficient was a finite logarithmic moment of the function that generates the potential, which is very similar to assumptions (\ref{LogMom}) and (\ref{RegCondLip}). Regularity of integrated density of states of discrete Schr\"{o}dinger operators was extensively studied by many authors, see \cite{B, BK, CK, DG, GK, GS, HV, MS, Mu, ST, V} and references therein.

The paper is structured as follows: in Section \ref{sec:results} we formulate the main results, Section \ref{sec:HolProof} contains a proof of Theorem \ref{thm:mainHol-3}, Section \ref{sec:LipProof} outlines the proof of Theorem \ref{thm:mainLip-3}, Appendix \ref{appendix} collects computational lemmata necessary for the proofs, and Appendix \ref{appendix:B} contains the theorem  about joint regularity of two measures mentioned above.



\section{Main results} \label{sec:results}

Let $M$ be a smooth closed Riemannian  manifold of dimension $k$ and $\mgr$ be a Borel probability measure on $\Homeo(M)$, the set of continuous homeomorphisms of~$M$. Consider the corresponding random dynamical system, given by the compositions \[
T_{n}:=f_{n}\circ\dots\circ f_1,
\]
where $f_i\in \Homeo(M)$ are chosen randomly and independently, with respect to the distribution $\mgr$.

If an initial point $x \in M$ is distributed with respect to a probability measure $\msp$, one can consider the distribution $\mgr * \msp$ of its random image $f(x)$. In other words, $\mgr * \msp$ is the $\mgr$-averaged push-forward image of the measure $\msp$:
\[
    \mgr * \msp := \int_{\Homeo(M)} (f_* \msp) \, \dd \mgr(f).
\]

The measure $\msp$ is called \emph{$\mgr$-stationary} if $\mgr * \msp = \msp$. Let $\mM$ denote the space of all Borel probability measures on $M$ equipped with the weak-* topology.

\begin{definition}
    A measure $\msp \in \mM$ is \textit{$(C,\alpha)$-H\"older} if
    \[
        \forall x\in X \quad \forall r>0 \quad \msp(B_r(x)) \le Cr^{\alpha}.
    \]

    A measure $\msp \in \mM$ is \textit{$(C, \alpha)$-$\log$-H\"older} if
    \[
        \forall x\in X \quad \forall r>0 \quad \msp(B_r(x)) \le C |\log(r)|^{-\alpha}.
    \]
\end{definition}

Let $\Hol(M)$ denote the space of all bi-H\"older continuous homeomorphisms of $M$ and $\Lip(M)$ denote the space of all bi-Lipschitz continuous homeomorphisms of $M$. We define H\"older and Lipschitz constants by the following formulae:

\begin{definition}
    For a map $f:M \to M$ and $\gamma > 0$, let
    \begin{equation*}
        H_{\gamma} (f) = \sup_{\substack{x, y \in M \\ x \ne y}} \frac{d(f(x), f(y))}{d(x, y)^{\gamma}}
    \end{equation*}
    be the (possibly infinite) $\gamma$-H\"older norm of this map, and if $f$ is a homeomorphism, let
    \begin{equation*}
        H'_{\gamma} (f) = \max \left( H_{\gamma} (f), H_{\gamma} \left( f^{-1} \right) \right).
    \end{equation*}
    Next, for $f \in \Hol(M)$ let 
    \begin{equation*}
        \gamma(f) = \sup\{\gamma > 0 \;: \; H'_{\gamma} (f) < \infty\}
    \end{equation*}
    be the critical H\"older regularity for $f$ (that might not be attained), and define
    \begin{equation*}
        L(f) = H'_{\gamma(f)/2} (f)
    \end{equation*}
    be the H\"older constant corresponding to the regularity $\gamma(f)/2$, that is automatically finite. We also introduce the quantity
    \begin{equation*}
        \Lambda(f) = 2 \cdot \frac{1 + \log(L(f))}{\gamma(f)}
    \end{equation*}
    as a (convenient for the future computations) measure of (ir)regularity of a bi-H\"older homeomorphism. Finally, for a bi-Lipschitz $f \in \Lip(M)$ we denote the corresponding bi-Lipschitz constant by
    \begin{equation*}
        \fL(f) = H'_1 (f).
    \end{equation*}
    
\end{definition}

Now we are ready to formulate our main results.

\begin{theorem} \label{thm:mainHol-1}
    Let $\mgr$ be a probability measure on $\Hol(M)$, satisfying the following assumptions:
    \begin{itemize}
        \item 
        There exists $\beta > 0$ such that
        \begin{equation*}
            \int_{\Hol(M)} \Lambda(f)^{\beta} \; \dd \mgr(f) < \infty.
        \end{equation*}
        \item \textbf{(no invariant measure)} There is no measure $m \in \mM$, such that $f_*m=m$ for $\mgr$-a.e.~$f$.
    \end{itemize}
    Then there exist $\alpha>0$ and $C$ such that every $\mgr$-stationary probability measure $\msp$ on $M$ is $(\alpha,C)$-$\log$-H\"older.
\end{theorem}

Moreover, we prove the following regularity estimate for a nonstationary random dynamical systems after finitely many iterations:

\begin{theorem} \label{thm:mainHol-3}
    Let $K$ be a compact set (with respect to weak-* topology) in the space of Borel probability measures on $\Homeo(M)$, satisfying the following assumptions:
    \begin{itemize}
        \item For every $\mgr \in K$ we have $\supp(\mgr) \in \Hol(M)$.
        \item 
        There exists $\beta > 0$ and $C_0$ such that for every $\mgr \in K$
        \begin{equation} \label{RegCondHol}
            \int_{\Hol(M)} \Lambda(f)^{\beta} \; \dd \mgr(f) < C_0.
        \end{equation}
        \item \textbf{(no deterministic image)} For every $\mgr \in K$ there are no measures $m_1, m_2 \in \mM$, such that $f_*m_1=m_2$ for $\mgr$-a.e.~$f$.
    \end{itemize}
    Then there exist $\alpha > 0$, $C$ and $\kappa > 1$ such that for every initial measure $\msp_0$, every sequence of measures $\mgr_1, \mgr_2, \ldots \in K $, every number of iterations $n \in \N$, and every $x\in M$ one has:
    \begin{equation*}
        \text{if}\ \  r>e^{-\kappa^n} \ \ \text{then}\ \ [\mgr_n * \mgr_{n - 1} * \ldots \mgr_1 * \msp_0](B_r(x)) < C |\log(r)|^{-\alpha}.
    \end{equation*}
\end{theorem}

We would like to mention the following connection between the \textbf{no invariant measure} and the \textbf{no deterministic image} assumptions. 

\begin{proposition} \label{prop:conv}
    Let $\mgr$ be a probability measure on $\Homeo (M)$ that satisfies \textbf{no invariant measure} condition (there is no common invariant measure for all $f \in \text{supp}\,\mgr$). Then there exists $k \in \mathbb{N}$ such that $\mgr^{*k}$ satisfies \textbf{no deterministic images} condition (i.e. there are no probability measures $\msp, \msp'$ on~$M$ such that $f_* \msp = \msp'$ with $f = f_k \circ f_{k - 1} \circ \ldots \circ f_1$ for $\mgr \times \mgr \times \ldots \times \mgr$-almost all $(f_1, f_2, \ldots, f_k) \in \left(\Homeo (M) \right)^k$).
\end{proposition}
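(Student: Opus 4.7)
My plan is to argue by contraposition: assuming that $\mgr^{*k}$ admits a pair of deterministic images for every $k \ge 1$, I will construct a measure $m$ invariant under $\mgr$-a.e.\ $f$, contradicting the hypothesis. To this end, define
\[
    \mathcal{D}_k := \left\{ \msp \in \mM : (f_k \circ \dots \circ f_1)_* \msp\ \text{is $\mgr^{\otimes k}$-a.s.\ independent of}\ (f_1, \dots, f_k) \right\}.
\]
By hypothesis each $\mathcal{D}_k$ is nonempty, and the common value of the pushforward in this definition is necessarily $\mgr^{*k} * \msp$.

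The crucial step will be the monotonicity $\mathcal{D}_{k+1} \subseteq \mathcal{D}_k$. Given $\msp \in \mathcal{D}_{k+1}$ with image $\msp'$, Fubini yields, for $\mgr$-a.e.\ $f_{k+1}$, the identity $(f_k \circ \dots \circ f_1)_* \msp = (f_{k+1}^{-1})_* \msp'$ for $\mgr^{\otimes k}$-a.e.\ $(f_1, \dots, f_k)$. Fixing any single such $f_{k+1}$ turns the right-hand side into a single measure $\msp''$, showing $\msp \in \mathcal{D}_k$. This step relies crucially on $\supp(\mgr) \subseteq \Homeo(M)$, so that each $f_{k+1}$ is invertible.

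A soft argument, using weak-* continuity of $T \mapsto T_* \msp$ together with the intersection of countably many full-measure sets, shows that each $\mathcal{D}_k$ is closed in the weak-* topology. Since $\mM$ is weak-* compact, the finite intersection property produces some $\msp_0 \in \bigcap_k \mathcal{D}_k$. Set $\msp_k := \mgr^{*k} * \msp_0$; unpacking $\msp_0 \in \mathcal{D}_{k+1}$ gives $f_* \msp_k = \msp_{k+1}$ for $\mgr$-a.e.\ $f$, and intersecting countably many such full-measure sets produces a single set $N \subseteq \Homeo(M)$ with $\mgr(N) = 1$ and $f_* \msp_k = \msp_{k+1}$ for every $f \in N$ and every $k \ge 0$.

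To conclude, let $m$ be any weak-* subsequential limit of the Cesaro averages $\bar\msp_n := \tfrac1n \sum_{k=0}^{n-1} \msp_k$. For $f \in N$,
\[
    f_* \bar\msp_n \;=\; \bar\msp_n + \tfrac1n \bigl( \msp_n - \msp_0 \bigr),
\]
and the error term vanishes in the weak-* topology as $n \to \infty$. Continuity of $f_*$ then yields $f_* m = m$ for every $f \in N$, contradicting the no-invariant-measure hypothesis. I expect the main obstacle to be the monotonicity step $\mathcal{D}_{k+1} \subseteq \mathcal{D}_k$, where the invertibility of $f_{k+1}$ is essential; the remainder is a combination of weak-* compactness of $\mM$ with a Krylov-Bogolyubov-style time-averaging.
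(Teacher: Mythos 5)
Your proposal is correct: the monotonicity $\mathcal{D}_{k+1}\subseteq\mathcal{D}_k$ via invertibility of $f_{k+1}$, the weak-* closedness of each $\mathcal{D}_k$ (using metrizability of $\mM$ and countable intersections of full-measure sets), the finite intersection property, and the Krylov--Bogolyubov averaging all go through, and the resulting $m$ with $f_*m=m$ for $\mgr$-a.e.\ $f$ contradicts the hypothesis (the a.e.\ and $\supp(\mgr)$ formulations agree since $\{f: f_*m=m\}$ is closed). The paper itself does not prove this proposition but defers to \cite{GKM}, and your argument is essentially the one used there, so there is nothing to flag.
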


Indeed, if for any $k \in \N$ there exists a measure with the same image under every $f \in \supp(\mgr^k)$ then any accumulation point of Krylov-Bogolyubov time averages of these measures has to be an invariant measure (see \cite{GKM} for more details).

Applying Theorem \ref{thm:mainHol-3} to a one-point set $K = \{ \mgr \}$ and taking into account Proposition \ref{prop:conv} we obtain the following 

\begin{corollary} \label{cor:Hol}
    Assume that $\mgr$ satisfies the assumptions of Theorem~\ref{thm:mainHol-1}. Then there exist $\alpha>0$, $C$ and $\kappa>1$ such that for every initial measure $\msp_0$, every number of iterations $n \in \N$, and every $x\in M$ one has:
    \begin{equation} \label{e.scale1}
        \text{if}\ \  r>e^{-\kappa^n} \ \ \text{then}\ \ [\mgr^{*n}*\msp_0](B_r(x)) < C |\log(r)|^{-\alpha}.
    \end{equation} 
\end{corollary}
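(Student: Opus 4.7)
The plan is to apply Theorem \ref{thm:mainHol-3} to a singleton compact set, after using Proposition \ref{prop:conv} to upgrade the \textbf{no invariant measure} hypothesis to \textbf{no deterministic images}. Let $k \in \N$ be the integer provided by Proposition \ref{prop:conv}, and set $\tilde{\mgr} := \mgr^{*k}$ and $K := \{\tilde{\mgr}\}$. The singleton $K$ is trivially weak-* compact, and by construction $\tilde{\mgr}$ satisfies the \textbf{no deterministic images} condition.

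The first technical step is to verify that the integral bound (\ref{RegCondHol}) transfers from $\mgr$ to $\tilde{\mgr}$. For a composition $F = f_k \circ \ldots \circ f_1$, iterating the basic estimate
\[
    d(f \circ g(x), f \circ g(y)) \le L(f)\,\bigl(L(g) \, d(x,y)^{\gamma(g)/2}\bigr)^{\gamma(f)/2}
\]
yields a lower bound $\gamma(F) \ge c_1(k) \prod_i \gamma(f_i)$ and an upper bound $\log L(F) \le c_2(k) \sum_i \log L(f_i) + c_3(k)$ (using $\gamma(f_j) \le 1$, which holds for every nontrivial H\"older homeomorphism of $M$). Consequently $(1 + \log L(F))/\gamma(F)$ is dominated by a sum of $k$ terms, each of the form $(1+\log L(f_i)) \cdot \prod_j \gamma(f_j)^{-1}$ times a constant depending only on $k$. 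Since hypothesis (\ref{RegCondHol}) forces both $(1 + \log L(f))^\beta$ and $\gamma(f)^{-\beta}$ to be $\mgr$-integrable, independence of $f_1, \ldots, f_k$ under $\mgr^{\times k}$ together with the elementary inequality $(\sum a_i)^{\beta'} \le C(k,\beta') \sum a_i^{\beta'}$ yields finiteness of $\int ((1+\log L(F))/\gamma(F))^{\beta'} \, \dd\tilde{\mgr}(F)$ for some $\beta' \in (0,\beta]$.

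With the hypotheses of Theorem \ref{thm:mainHol-3} verified for $K = \{\tilde{\mgr}\}$, applying it with the constant sequence $\mgr_j = \tilde{\mgr}$ produces constants $\tilde{\alpha}, \tilde{C}, \tilde{\kappa}$ such that $[\tilde{\mgr}^{*n} * \msp](B_r(x)) < \tilde{C}|\log r|^{-\tilde{\alpha}}$ whenever $r > \tilde{\kappa}^n$, uniformly in $\msp \in \mM$. For arbitrary $N \in \N$, I write $N = kn + s$ with $0 \le s < k$ and observe that $\mgr^{*N} * \msp_0 = \tilde{\mgr}^{*n} * (\mgr^{*s} * \msp_0)$. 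Applying the previous estimate to the initial measure $\mgr^{*s} * \msp_0$ gives the required bound whenever $r > \tilde{\kappa}^n$. Setting $\kappa := \tilde{\kappa}^{1/(2k)}$, the constraint $r > \kappa^N$ implies $r > \tilde{\kappa}^n$ for all $N$ beyond a fixed threshold; the finitely many small values of $N$ are absorbed by enlarging the constant $C$.

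The main obstacle is the composition step: the asymmetric definition of $L(f)$ (at exponent $\gamma(f)/2$ rather than $\gamma(f)$) requires careful bookkeeping to track how the H\"older data of $F$ depends on that of the factors, and one needs both a positive logarithmic moment of $L$ and a negative moment of $\gamma$ to emerge cleanly from the single hypothesis (\ref{RegCondHol}). Once this verification is in hand, the passage from multiples of $k$ back to arbitrary iteration counts $N$ is a routine rescaling of $\kappa$.
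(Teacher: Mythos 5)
Your overall route --- upgrading \textbf{no invariant measure} to \textbf{no deterministic image} via Proposition~\ref{prop:conv}, applying Theorem~\ref{thm:mainHol-3} to the singleton $K=\{\mgr^{*k}\}$ with the constant sequence, and absorbing the remainder in $N=kn+s$ by replacing $\kappa$ with a root of it and enlarging $C$ --- is exactly what the paper's one-sentence derivation intends, and that part of your bookkeeping is fine. The gap is in the step you yourself flag as the main obstacle: the claimed bound $\log L(F)\le c_2(k)\sum_i\log L(f_i)+c_3(k)$ for $F=f_k\circ\dots\circ f_1$ is false. Iterating $d(f\circ g(x),f\circ g(y))\le L(f)\bigl(L(g)d(x,y)^{\gamma(g)/2}\bigr)^{\gamma(f)/2}$ controls $d(F(x),F(y))$ only at the exponent $\prod_i\gamma(f_i)/2^{k}$, whereas $L(F)$ is by definition a supremum of $d(F(x),F(y))/d(x,y)^{\gamma(F)/2}$ with $\gamma(F)\ge\prod_i\gamma(f_i)$, i.e.\ at an exponent larger by a factor $2^{k-1}$ than the one you control; since $d(x,y)$ raised to the (negative) difference of exponents blows up as $d(x,y)\to 0$, the chain estimate yields no bound on $L(F)$. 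This is not merely a bookkeeping issue: on $S^1$ one can build $f_1,f_2$ with $\gamma(f_i)=1/2$ and $L(f_i)=O(1)$ such that $F=f_2\circ f_1$ maps an interval of tiny length $d_0$ onto one of length $d_0^{1/16}$ while $\gamma(F)=1/4$, whence $L(F)\ge d_0^{1/16}/d_0^{1/8}=d_0^{-1/16}$ is arbitrarily large; spreading mass over a sequence of such pairs with rapidly decreasing $d_0$ even produces a $\mgr$ satisfying \eqref{RegCondHol} for every $\beta$ whose square convolution violates it for every exponent. So the hypothesis of Theorem~\ref{thm:mainHol-3}, read literally for $\tilde{\mgr}=\mgr^{*k}$, cannot be verified the way you propose.

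The repair is to observe that the proofs of Proposition~\ref{prop:EnChangeHol}, Corollary~\ref{cor:EnOneStepForm2}, Lemma~\ref{l:TV-prime}, etc., use $L(f)$ and $\gamma(f)$ only through the two-sided distortion inequalities $d(f(x),f(z))\ge \left(d(x,z)/L(f)\right)^{2/\gamma(f)}$ (and the analogous one for $f^{-1}$). Such inequalities do compose: $F$ admits the pair $\Lambda=\prod_iL(f_i)$, $\Gamma=2^{-(k-1)}\prod_i\gamma(f_i)$ with $d(F(x),F(z))\ge(d(x,z)/\Lambda)^{2/\Gamma}$, and $(1+\log\Lambda)/\Gamma$ has exactly the finite $\beta'$-moment you compute from independence, the inequality $\gamma(f_i)\le 1$, and $\bigl(\sum a_i\bigr)^{\beta'}\le C(k,\beta')\sum a_i^{\beta'}$. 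You should therefore run the argument of Theorem~\ref{thm:mainHol-3} for $\mgr^{*k}$ with this inherited pair in place of the intrinsic $(L(F),\gamma(F))$ --- equivalently, restate the theorem's hypothesis in terms of admissible distortion pairs --- rather than attempting to verify \eqref{RegCondHol} for $\mgr^{*k}$ as written. With that modification the rest of your argument goes through.
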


\begin{remark} \label{rem:Main}
    It is also easy to see that Theorem \ref{thm:mainHol-1} follows from Corollary \ref{cor:Hol}, hence all we need to prove is Theorem \ref{thm:mainHol-3}.
\end{remark}

Using similar techniques we are also able to provide more refined estimates for the case of bi-Lipschitz homeomorphisms.

\begin{theorem} \label{thm:mainLip-1}
    Let $\mgr$ be a probability measure on $\Lip(M)$, satisfying the following assumptions:
    \begin{itemize}
        \item 
        There exists $\alpha > 0$ such that
        \begin{equation} \label{RegCondLip}
            \int_{\Lip(M)} (\log(\fL(f)))^{\alpha} \; d\mgr(f) < \infty.
        \end{equation}
        \item 
        There is no measure $m \in \mM$, such that $f_*m=m$ for $\mgr$-a.e.~$f$.
    \end{itemize}
    Then there exists $C$, such that every stationary measure $\msp \in \mM$ is $(C, \alpha/2)$-$\log$-H\"older.
\end{theorem}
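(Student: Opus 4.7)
The plan is to follow the architecture of Theorem \ref{thm:mainHol-3}: prove a uniform finite-time non-concentration bound, and deduce Theorem \ref{thm:mainLip-1} by applying it to the stationary identity $\msp = \mgr^{*n} * \msp$ with $n$ chosen so that $\kappa^n < r$. The target finite-time statement is: there exist $C$ and $\kappa < 1$ such that for every initial $\msp_0$, every $n \in \N$, every $x \in M$, and every $r > \kappa^n$,
\[
    [\mgr^{*n} * \msp_0](B_r(x)) \le C |\log r|^{-\alpha/2}.
\]
To access the no-deterministic-image setting used in the proof, I would invoke Proposition \ref{prop:conv} and replace $\mgr$ by an iterate $\mgr^{*k_0}$; since $\log \fL$ is subadditive under composition, this preserves the $\alpha$-log-moment up to a multiplicative constant.

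The main estimate is set up through the non-concentration function $\phi_n(r) := \sup_{x, \msp_0} [\mgr^{*n} * \msp_0](B_r(x))$. For any $f \in \Lip(M)$ one has $f^{-1}(B_r(x)) \subseteq B_{\fL(f) r}(f^{-1}(x))$, so one step of the random walk yields
\[
    \phi_n(r) \le \int \phi_{n-1}(\fL(f) r) \dd \mgr(f).
\]
Iterating, the preimage after $n$ steps lies in a ball of radius $R_n = r \cdot \prod_{i=1}^n \fL(f_i)$, whose logarithm $\log r + \sum_i \log \fL(f_i)$ is an additive i.i.d.\ random walk with finite $\alpha$-moment. This is the crucial simplification over the H\"older setting of Theorem \ref{thm:mainHol-3}, where the analogous radius is a mixed polynomial/multiplicative expression in both $L(f_i)$ and $\gamma(f_i)$. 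Combined with the quantitative spreading estimate coming from the no-deterministic-image condition (responsible for the $r > \kappa^n$ threshold, exactly as in the proof of Theorem \ref{thm:mainHol-3}), the recursion is controlled by the tail of $\sum_i \log \fL(f_i)$.

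The exponent $\alpha/2$ should be extracted via a Markov-type tail bound. For $\alpha \in (0, 1]$ subadditivity of $x \mapsto x^\alpha$ gives
\[
    \p\Bigl(\sum_{i=1}^n \log \fL(f_i) > T\Bigr) \le \frac{n \E_\mgr[(\log \fL)^\alpha]}{T^\alpha},
\]
with Marcinkiewicz-Zygmund covering $\alpha > 1$. Balancing the threshold $T$ and iteration length $n$ against the spreading rate -- essentially, iterating the recursion along a dyadic sequence of scales from $r$ up to a fixed small scale -- should then produce the bound $C |\log r|^{-\alpha/2}$. The main obstacle I anticipate is keeping the exponent sharp through this balancing and, in particular, establishing the spreading lemma uniformly in scale using only a finite $\alpha$-log-moment (rather than the exponential moments required by classical Guivarc'h-type arguments). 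My intent would be to adapt the spreading lemma from the proof of Theorem \ref{thm:mainHol-3}, replacing polynomial radius scaling by the linear one available in the bi-Lipschitz setting, which is precisely what allows the improvement from H\"older to $\alpha/2$-$\log$-H\"older regularity.
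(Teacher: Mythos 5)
Your overall architecture (reduce to a finite-time non-concentration bound at scales $r>\kappa^n$, then apply it to $\msp=\mgr^{*n}*\msp$) matches the paper's reduction of Theorem \ref{thm:mainLip-1} to Theorem \ref{thm:mainLip-3}, and the reduction to the no-deterministic-image setting via Proposition \ref{prop:conv} is also correct. But the engine you propose for the finite-time bound is not the paper's, and it has a genuine gap. The recursion $\phi_n(r)\le\int\phi_{n-1}(\fL(f)r)\,\dd\mgr(f)$ is true but by itself only controls how fast concentration can \emph{grow}; all of the decay must come from the ``quantitative spreading estimate coming from the no-deterministic-image condition,'' which you never state or prove. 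You say you would ``adapt the spreading lemma from the proof of Theorem \ref{thm:mainHol-3},'' but that proof contains no such lemma: it is a potential-theoretic argument in which non-concentration is encoded globally by the energy $\En_{\alpha,\eps}(\msp)=\iint U_{\alpha,\eps}(d(x,z))\,\dd\msp(x)\dd\msp(z)$, the one-step estimate (\ref{EnLipOneStep}) controls the energy growth, and the actual contraction (Proposition \ref{prop:contr} and its Lipschitz analogue) is obtained by contradiction from the Hilbert-space variance identity of Lemma \ref{l:var} together with the Wasserstein estimates and a weak-* compactness argument that produces a forbidden deterministic image. There is no pointwise or scale-by-scale spreading statement that could be fed into your recursion, so the core difficulty of the theorem is left entirely unresolved in your proposal.

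A secondary but real problem is the exponent. In the paper the factor $\alpha/2$ is intrinsic to the quadratic structure of the energy: Lemma \ref{lm:EnLowerBound} gives $\msp(B_r(x))\le C\sqrt{\En_{\alpha,\eps}(\msp)}\,(-\log r)^{-\alpha/2}$, so the square root of the (bounded) energy is what produces the halved exponent, after choosing $\eps=\lambda^{n/k}$ as in the proof of Theorem \ref{thm:mainHol-3}. Your Markov/Marcinkiewicz--Zygmund balancing of $T$ against $n$ is only sketched (``should then produce the bound''), and a naive choice such as $T\sim|\log r|$, $n\sim|\log r|$ gives $n\,T^{-\alpha}\sim|\log r|^{1-\alpha}$, which does not even decay for $\alpha\le 1$; it is not clear that any balancing recovers $\alpha/2$ without an additional $L^2$-type mechanism. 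So while your observation that the bi-Lipschitz radius scaling is additive on the logarithmic scale correctly identifies \emph{why} the Lipschitz case yields a better exponent than the H\"older case, the proposal as written does not constitute a proof: both the spreading input and the exponent extraction are missing.
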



Similar to the case of H\"older continuous homeomorphisms, a nonstationary version of Theorem \ref{thm:mainLip-1} is available.

\begin{theorem} \label{thm:mainLip-3}
    Let $K$ be a compact set (with respect to weak-* topology) in the space of Borel probability measures on $\Homeo(M)$, satisfying the following assumptions:
    \begin{itemize}
        \item For every $\mgr \in K$ we have $\supp(\mgr) \subset \Lip(M)$.
        \item 
        There exists $\alpha > 0$ and $C_0$ such that for every $\mgr \in K$
        \begin{equation} \label{AssumptionLip}
            \int_{\Lip(M)} \left( \log(\fL(f)) \right)^{\alpha} \; d\mgr(f) < C_0.
        \end{equation}
        \item 
        For every $\mgr \in K$ there are no measures $m_1, m_2 \in \mM$, such that $f_*m_1=m_2$ for $\mgr$-a.e.~$f$.
    \end{itemize}
    Then there exist $C$ and $\kappa > 1$ such that for every initial measure $\msp_0$, every sequence of measures $\mgr_1, \mgr_2, \ldots \in K$, every number of iterations $n \in \N$, and every $x\in M$ one has:
    \begin{equation*}
        \text{if}\ \  r>e^{-\kappa^n} \ \ \text{then}\ \ [\mgr_n * \mgr_{n - 1} * \ldots \mgr_1 * \msp_0](B_r(x)) < C |\log(r)|^{-\frac{\alpha}{2}}.
    \end{equation*}
\end{theorem}

The following corollary also holds:
\begin{corollary} 
    Assume that $\mgr$ satisfies the assumptions of Theorem~\ref{thm:mainLip-1}. Then there exist $C$ and $\kappa > 1$ such that for every initial measure $\msp_0$, every number of iterations $n \in \N$, and every $x \in M$ one has:
    \begin{equation*}
        \text{if}\ \  r>e^{-\kappa^n} \ \ \text{then}\ \ [\mgr^{*n}*\msp_0](B_r(x)) < C |\log(r)|^{-\frac{\alpha}{2}}.
    \end{equation*}
\end{corollary}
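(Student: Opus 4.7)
The plan is to deduce the corollary from Theorem \ref{thm:mainLip-3} by applying it to a compact singleton $K=\{\mgr^{*k}\}$ for a suitably chosen $k$, and then to bootstrap from multiples of $k$ to all $n$. The three checks needed are: that $\mgr^{*k}$ satisfies the \textbf{no deterministic images} condition; that $\mgr^{*k}$ still has a finite $\alpha$-logarithmic moment of the Lipschitz constant; and that the conclusion for iterates of the form $qk$ can be promoted to arbitrary $n$ with a mild loss in $\kappa$.

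First, Proposition \ref{prop:conv} supplies a $k\in\N$ such that $\mgr^{*k}$ satisfies \textbf{no deterministic images}, using the given \textbf{no invariant measure} hypothesis. Next, since $\Lip(M)$ is a semigroup under composition with $\fL(f\circ g)\le \fL(f)\fL(g)$, for $T_k=f_k\circ\cdots\circ f_1$ we have
\[
\log\fL(T_k)\le \sum_{i=1}^{k}\log\fL(f_i).
\]
Using $(x_1+\dots+x_k)^\alpha\le k^{\max(\alpha,1)}(x_1^\alpha+\dots+x_k^\alpha)$ for $x_i\ge 0$ together with the independence of the $f_i$, the integral $\int(\log\fL(f))^\alpha\,d\mgr^{*k}(f)$ is bounded by $k^{\max(\alpha,1)+1}\int(\log\fL(f))^\alpha\,d\mgr(f)<\infty$. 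Thus the singleton $K=\{\mgr^{*k}\}$ satisfies all hypotheses of Theorem \ref{thm:mainLip-3}, producing constants $C'$ and $\kappa'<1$ such that for every initial measure $\msp_0'$, every $q\in\N$, and every $x\in M$,
\[
\text{if } r>(\kappa')^q,\ \text{then}\ [(\mgr^{*k})^{*q}*\msp_0'](B_r(x))<C'|\log r|^{-\alpha/2}.
\]

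Finally, for a general $n\in\N$, write $n=qk+s$ with $0\le s<k$ and set $\msp_0':=\mgr^{*s}*\msp_0$, so that
\[
\mgr^{*n}*\msp_0 = (\mgr^{*k})^{*q}*\msp_0'.
\]
Choose $\kappa\in(\kappa'^{1/k},1)$; then for $n$ sufficiently large, $\kappa^n\ge (\kappa')^{q}$ because $q\ge n/k-1$, so the bound for iterates of $\mgr^{*k}$ yields the claim with the same constant $C=C'$. For the finitely many small $n$ where this fails, the conclusion holds trivially after enlarging $C$ (since probability measures always satisfy $\msp(B_r(x))\le 1\le C|\log r|^{-\alpha/2}$ on any fixed bounded range of $r$, using $r<1$ on $M$ which we may arrange by rescaling $\kappa$). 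This gives the corollary with the desired $\kappa$ and $C$.

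The only genuinely delicate step is the moment propagation in the second paragraph; the rest is essentially bookkeeping in rewriting $n=qk+s$. Since $\alpha$ is arbitrary and may be less than $1$, one must use the correct direction of the power-mean inequality (hence the factor $k^{\max(\alpha,1)}$ rather than $k^{\alpha-1}$), but no deeper issue appears. All other work is absorbed into Theorem \ref{thm:mainLip-3}, whose proof is the substantive content.
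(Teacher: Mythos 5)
Your proof is correct and follows exactly the route the paper intends (and only sketches): apply the nonstationary Theorem~\ref{thm:mainLip-3} to the singleton $K=\{\mgr^{*k}\}$ with $k$ supplied by Proposition~\ref{prop:conv}, and reduce general $n$ to multiples of $k$. You in fact supply two details the paper leaves implicit — the submultiplicativity $\fL(f\circ g)\le\fL(f)\fL(g)$ combined with the power-mean inequality to propagate the logarithmic moment to $\mgr^{*k}$, and the $n=qk+s$ bookkeeping with the adjusted $\kappa$ — and both are handled correctly.
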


\section{H\"older continuous homeomorphisms} \label{sec:HolProof}

\subsection{Outline of the proof} \label{subsection:plan}

This section is devoted to the proof of Theorem \ref{thm:mainHol-3}. As it was already mentioned in Corollary \ref{cor:Hol} and Remark \ref{rem:Main}, it will immediately imply Theorem \ref{thm:mainHol-1}. The regularity of a measure $\msp$ on $M$ can be established by considering the integral
\begin{equation} \label{IntForInt}
    \iint_{M^2} |\log(d(x, z))|^{\alpha} \dd \msp(x) \dd \msp(z).
\end{equation}
Namely, if it is finite, then the measure of any ball $B_r(x)$ by the Markov inequality doesn't exceed
\begin{equation*}
    \msp(B_r(x)) \le c |\log(r)|^{-\frac{\alpha}{2}}.
\end{equation*}
Moreover, a similar estimate can be obtained given finiteness of any integral
\begin{equation} \label{EnInt}
    \En_{U}(\msp) = \iint_{M \times M} U(d(x, z)) \dd \msp(x) \dd\msp(z),
\end{equation}
where the function $U(r)$ has the same singularity as $|\log(r)|^{\alpha}$ as $r \to 0$. Indeed, if a lower bound $U(r) > c |\log(r)|^{\alpha}$ holds for some $r > \eps$, then for the same $r$ one will have the bound for the measures $\msp(B_r(x))$.

To estimate integrals of the type \eqref{IntForInt}, we will apply the machinery introduced in \cite{GKM}. Namely, we will consider the quantity $\En_{\alpha, \eps} (\msp) = \En_{U_{\alpha, \eps}} (\msp)$, associated to some function $U_{\alpha, \eps}$, and satisfying the following properties:

\begin{enumerate}[label=(\roman*)]

    \item\label{i} For a fixed $\alpha > 0$ it admits a uniform upper bound, depending only on the choice of the minimal radius $\eps$:
    \begin{equation} \label{upperInt}
        \En_{\alpha, \eps} (\msp) \le U_{\alpha, \eps} (0)
    \end{equation}
    (see Lemma \ref{lm:EnUpperBound}).
    
    \item\label{ii} Outside the radius $\eps$ it satisfies the lower bound 
    \begin{equation} \label{lowerInt}
        U_{\alpha, \eps} (r) \ge c |\log(r)|^{\alpha}
    \end{equation}
    (see Proposition \ref{prop:UAEps} part \textbf{(III)}).
    
    \item\label{iii} A convolution with any $\mgr \in K$ reduces the quantity $\En_{\alpha, \eps}$ by a linear factor: for some $\lambda < 1$, $\tilde{C}$ for any $\eps > 0$, any $\msp$ and any $\mgr \in K$ one has
    \begin{equation} \label{contrInt}
        \En_{\alpha, \eps} (\mgr * \msp) < \max(\lambda \En_{\alpha, \eps} (\msp), \tilde{C}) 
    \end{equation}
    (see Proposition \ref{prop:contr} and Corollary \ref{c:En}).
    
\end{enumerate}

Having constructed such functions, for every $n$ we choose $\eps$ so that the upper bound
\begin{equation} \label{unifUpperInt}
    \En_{\alpha, \eps} (\mgr_n * \mgr_{n - 1} * \ldots * \mgr_1 * \msp) < \max(\lambda^n U_{\alpha, \eps} (0), \tilde{C})
\end{equation}
obtained from joining \ref{i} and \ref{iii}, would be equal to $\tilde{C}$. Then, the application of the Markov inequality to \ref{ii} concludes the proof.

The most technically involved part of the proof is establishing inequality \eqref{contrInt}. The key idea is to replace the energy $\En_{\alpha, \eps} (\msp)$ with an equivalent one $\tEn_{\alpha, \eps} (\msp)$ (see Section \ref{subsection:tEn}), where $\tEn_{\alpha, \eps}$ can be represented as a square of the  norm of an $L^2(M, \Leb)$ function. Let us call this function $\rho_{\alpha, \eps}[\msp] (y)$. It will follow from the construction that $$\rho_{\alpha, \eps}[\mgr * \msp](y) = \E_{\mgr} \rho_{\alpha, \eps}[f_* \msp](y),$$ and that functions $\rho_{\alpha, \eps}[f_* \msp](y)$ are comparable in $L^2$ norm with $\rho_{\alpha, \eps}[\msp] (y)$ for bi-H\"older $f$. Now the idea of the proof can be formulated as follows: the energy $\tEn_{\alpha, \eps} (\mgr * \msp)$ is a square of norm of a convex combination of $L^2(M, \Leb)$ vectors of norm comparable to the one of $\rho_{\alpha, \eps}[\msp]$:
\begin{equation*}
    \tEn_{\alpha, \eps} (\mgr * \msp) = \| \E_{\mgr} \rho_{\alpha, \eps}[f_* \msp] \|^2_{L^2}, \quad \text{and} \quad \|\rho_{\alpha, \eps}[f_* \msp]\|_{L^2} \sim \|\rho_{\alpha, \eps}[\msp]\|_{L^2}.
\end{equation*}
If the contraction doesn't happen (inequality \eqref{contrInt} doesn't hold), then the convex combination has a norm that is comparable to that of its components. This fact implies that the components are aligned with each other. In other words, the functions $\rho_{\alpha, \eps}[f_* \msp](y)$ are close to one another for $\mgr$-almost all $f$. Hence, so are the measures given by 
\begin{equation*}
    \theta_{\alpha, \eps} [f_* \msp] = \frac{\rho_{\alpha, \eps}^2 [f_* \msp] (y) \dd \Leb(y)}{\tEn_{\alpha, \eps} (f_* \msp)}
\end{equation*}
(notice that due to our choice of $\tEn_{\alpha, \eps}(\msp)$ and $\rho_{\alpha, \eps} [\msp]$ the measure $\theta_{\alpha, \eps} [\msp]$ always has total mass equal to 1). Finally, we show that measures $\theta_{\alpha, \eps} [f_* \msp]$ and $f_* \theta_{\alpha, \eps} [\msp]$ are also close for any bi-H\"older $f$. Now it remains to notice that for $\mgr$-almost every $f$ the measures $f_* \theta_{\alpha, \eps} [\msp]$ have to be close to one another, which turns the measure $\theta_{\alpha, \eps} [\msp]$ into a measure with an ``almost deterministic image''. After passing to a limit, we obtain a true measure with deterministic image and hence arrive to a contradiction.

The plan for the rest of of this section is as follows:
\begin{itemize}
    \item In Section \ref{subsection:Uphi} we define functions $U_{\alpha, \eps}$ and $\varphi_{\alpha, \eps}$ (which are needed to construct $\En_{\alpha, \eps}$ and $\tEn_{\alpha, \eps}$ mentioned above).
    \item In Section \ref{subsection:Uae} we collect some useful properties of $U_{\alpha, \eps}$, including the lower bound \eqref{lowerInt} (Proposition \ref{prop:UAEps} part \textbf{(III)}).
    \item In Section \ref{subsection:En} we introduce $\En_{\alpha, \eps}$. We prove the upper bound \eqref{upperInt} (Lemma \ref{lm:EnUpperBound}), estimates on how $\En_{\alpha, \eps}$ behaves if we replace $\msp$ with $f_* \msp$ for a bi-H\"older $f$ (Proposition \ref{prop:EnChangeHol}), and a Markov's type inequality (Lemma \ref{lm:EnLowerBound}) to be used in the end of the proof of Theorem \ref{thm:mainHol-3}.
    \item In Section \ref{subsection:tEn} we introduce $\tEn_{\alpha, \eps}$, $\rho_{\alpha, \eps}$, and $\theta_{\alpha, \eps}$ mentioned above. We prove that energies $\En_{\alpha, \eps}$ and $\tEn_{\alpha, \eps}$ are equivalent (Proposition \ref{prop:EnEquiv}).
    \item In Section \ref{subsection:theta} we prove that measures $\theta_{\alpha, \eps} [f_* \msp]$ and $f_* \theta_{\alpha, \eps} [\msp]$ are close to each other for any bi-H\"older $f$ (Proposition \ref{prop:WassEst}). We also show that the average (with respect to $\mgr$) value of $\tEn_{\alpha, \eps} (f_* \msp)$ is close to $\tEn_{\alpha, \eps} (\msp)$ (see Proposition \ref{lm:tailsEst}). This is where the tail estimate \eqref{RegCondHol} comes into play.
    \item In Section \ref{subsection:contr} we prove the contraction property \eqref{contrInt} (Proposition \ref{prop:contr}).
    \item In Section \ref{subsection:finalProof} we establish the upper bound \eqref{unifUpperInt} and finish the proof of Theorem \ref{thm:mainHol-3}. 
\end{itemize}

\subsection{Choice of the functions $U_{\alpha, \eps}$ and $\varphi_{\alpha, \eps}$} \label{subsection:Uphi}

As mentioned above, we would like to find a function $\varphi_{\alpha} (r)$, such that $\varphi_{\alpha} * \varphi_{\alpha} (r)$ has a singularity of order $|\log(r)|^{\alpha}$ at the origin. A natural candidate is given by a square root of the derivative of $|\log(r)|^{\alpha}$. As we will be working on a Riemannian manifold of dimension $k$ we adjust $\varphi_{\alpha}$ accordingly:

\begin{definition} \label{def:phia}
    Define
    \begin{equation*}
        \varphi_{\alpha} (x) =
        \begin{cases}
            \frac{( - \log(x))^{\frac{\alpha - 1}{2}}}{x^{\frac{k}{2}}}, \quad &\text{if $0 < x < \frac{1}{e}$,}\\
            0, \quad &\text{otherwise,}
        \end{cases}
    \end{equation*}
    and for $r > 0$
    \begin{multline*}
        U_{\alpha} (r) = \int_{\R^k} \varphi_{\alpha} (|\bx|) \varphi_{\alpha} (|\br - \bx|) \dd \bx = \\
        = \int_{\{|\bx| < \frac{1}{e}\} \cap \{|\br - \bx| < \frac{1}{e}\}} \varphi_{\alpha} (|\bx|) \varphi_{\alpha} (|\br - \bx|) \dd \bx,
    \end{multline*}
    where $\bx \in \R^k$ and $\br = (r, 0, \dots, 0) \in \R^k$.
\end{definition}

The function $U_{\alpha} (r)$ has the same singularity at the origin as $|\log(r)|^{\alpha}$ 
(see Lemma \ref{lm:partInt}), but still needs a little adjustment, because the integral 
\begin{equation*}
    \iint_{M^2} U_{\alpha} (d(x, z)) \dd \msp(x) \dd \msp(z)
\end{equation*}
might be infinite and the estimate mentioned in Part \ref{i} of the outline will be meaningless. To fix that problem we define the following family of cut-offs:

\begin{definition} \label{def:phiae}

    Define
    \begin{equation*}
        \varphi_{\alpha, \eps} (x) = \begin{cases}
            \varphi_{\alpha} (x), \quad \text{if $x \ge \eps$}; \\
            \varphi_{\alpha}(\eps), \quad \text{if $0 < x < \eps$}
        \end{cases}
    \end{equation*}
    and
    \begin{multline} \label{def:UAE}
        U_{\alpha, \eps} (r) = \int_{\R^k} \varphi_{\alpha, \eps} (|\bx|) \varphi_{\alpha, \eps} (|\br - \bx|) \dd \bx = \\
        = \int_{\{|\bx| < \frac{1}{e}\} \cap \{|\br - \bx| < \frac{1}{e}\}} \varphi_{\alpha, \eps} (|\bx|) \varphi_{\alpha, \eps} (|\br - \bx|) \dd \bx,
    \end{multline}
    where $\bx \in \R^k$ and $\br = (r, 0, \dots, 0) \in \R^k$.

\end{definition}

\subsection{Properties of $U_{\alpha, \eps}$} \label{subsection:Uae}
In this section we collect some important properties of $U_{\alpha, \eps}$.

An important symmetry that the function $U_{\alpha, \eps}$ possesses is described in the following

\begin{remark} \label{rem:symm}
    Note that the definition of $U_{\alpha, \eps}$ can be reformulated as follows: for any two points $\bx, \bz \in \R^k$ consider the integral
    \begin{equation} \label{eq:I}
        I_{\alpha, \eps} (\bx, \bz) := \int_{\R^k} \varphi_{\alpha, \eps}(|\by - \bx|) \varphi_{\alpha, \eps}(|\by - \bz|) \, \dd \Leb(\by).
    \end{equation}
    Due to the spherical symmetry, this integral depends only on the distance $r = |\bx - \bz|$ between these two points:
    \begin{equation*}
        I_{\alpha, \eps} (\bx, \bz) = U_{\alpha, \eps}(|\bx - \bz|)
    \end{equation*}
    for some function $U_{\alpha, \eps}$ of $r = |\bx - \bz|$ (which of course has to coincide with $U_{\alpha, \eps}$ defined by \eqref{def:UAE}). We can take this as a definition of~$U_{\alpha, \eps} (r)$.
\end{remark}

Let us denote by $c_{\alpha, k}$ the following constant:
\begin{equation*}
    c_{\alpha, k} = \frac{k \omega_k}{\alpha},
\end{equation*}
where $\omega_k$ it the volume of a $k$-dimensional unit ball. 
The following Proposition establishes the properties of $U_{\alpha, \eps}$ outlined in parts \ref{i} and \ref{ii} of the plan, together with some other estimates that will be useful later.
\begin{proposition} \label{prop:UAEps}
    For every dimension $k \ge 1$ the following holds:
    \begin{enumerate}

        \item[\textbf{(I)}] For every $\alpha > 0$ and $\eps > 0$ the function $U_{\alpha, \eps}$ is non-increasing.

        \item[\textbf{(II)}] For every $\alpha_0 > 0$ there exist $r_0 > 0$ and $\eps_0 > 0$, such that for every $0 < \alpha < \alpha_0$, every $0 < \eps < \eps_0$, and every $0 < r < r_0$ we have:
        \begin{equation} \label{ineq:UB}
            U_{\alpha, \eps} (r) < 2 c_{\alpha, k} (- \log(r))^{\alpha}.
        \end{equation}

        \item[\textbf{(III)}] For every $\alpha_0 > 0$ there exist $\eps_0 > 0$, such that for every $0 < \alpha < \alpha_0$, and every $r, \eps$, such that $0 < \eps < r < \eps_0$ we have:
        \begin{equation} \label{ineq:LB}
            U_{\alpha, \eps} (r) > \frac{c_{\alpha, k}}{2} (- \log(r))^{\alpha}.
        \end{equation}

        \item[\textbf{(IV)}] For every $\alpha_0 > 0$ and every $\delta > 0$ there exist $\eps_0 > 0$ and $r_0 > 0$ such that for every $0 < \alpha < \alpha_0$, every $0 < \eps < \eps_0$, and every $0 < r_1 \le r_2 \le r_0$ we have
        \begin{equation} \label{UOneStep}
            U_{\alpha, \eps} (r_1) \le (1 + \delta) \left( \frac{\log(r_1)}{\log(r_2)} \right)^{\alpha} U_{\alpha, \eps} (r_2).
        \end{equation}

    \end{enumerate}

\end{proposition}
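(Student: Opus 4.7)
My plan is to derive all four conclusions from a single sharp asymptotic
\[
    U_{\alpha, \eps}(r) = c_{\alpha, k}(-\log r)^{\alpha}\bigl(1 + o(1)\bigr),
\]
in which the error term tends to $0$ uniformly in $\alpha \in (0, \alpha_0)$ as $\max(r, \eps) \to 0$.

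For (I), I would first verify that $\varphi_{\alpha, \eps}$ is radially non-increasing. A direct computation gives $\tfrac{d}{dx}\log \varphi_\alpha(x) = \tfrac{1}{2x}\bigl(\tfrac{\alpha - 1}{\log x} - k\bigr)$. On $(0, 1/e)$ the term $\tfrac{\alpha-1}{\log x}$ is $\le 0$ when $\alpha \ge 1$, and for $\alpha < 1$ it equals $\tfrac{1-\alpha}{|\log x|} \le 1 - \alpha < 1$; since $k \ge 1$ the bracket is $\le 1 - k \le 0$ for every $\alpha > 0$. Hence $\varphi_\alpha$ is decreasing on $(0, 1/e)$, and combined with the plateau on $(0, \eps)$ and the vanishing beyond $1/e$ the map $y \mapsto \varphi_{\alpha, \eps}(|y|)$ is radially non-increasing on $\R^k$. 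A layer-cake decomposition of each factor as a non-negative combination of indicators of concentric balls then expresses $U_{\alpha, \eps}(r)$ as a non-negative combination of intersection volumes $\vol\bigl(B(0, s) \cap B(\br, t)\bigr)$, each of which is non-increasing in $r = |\br|$, giving (I).

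For (II) and (III) the key rescaling is $\bx = r\by$: the Jacobian $r^k$ cancels with the product $(r|\by|)^{-k/2}(r|e_1 - \by|)^{-k/2}$, where $e_1 = \br/r$, leaving
\[
    U_\alpha(r) = \int_D \frac{[-\log(r|\by|)]^{\frac{\alpha - 1}{2}} [-\log(r|e_1 - \by|)]^{\frac{\alpha - 1}{2}}}{|\by|^{k/2}\,|e_1 - \by|^{k/2}}\, \dd\by,
\]
with $D = \{|\by| < 1/(er)\} \cap \{|e_1 - \by| < 1/(er)\}$. I would split $D$ into the ``bulk'' $\{|\by|, |e_1 - \by| \ge 1/2\}$ and two ``near'' regions. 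In the bulk, for $|\by| \ge 2$ one has $|e_1 - \by| = |\by|(1 + O(1/|\by|))$; switching to polar coordinates in $\by$ and substituting $u = -\log(r|\by|)$ reduces the radial integral to $k\omega_k \int_1^{-\log r} u^{\alpha - 1}\, du = c_{\alpha, k}\bigl[(-\log r)^\alpha - 1\bigr]$, which supplies the main term. Each near region contributes $O\bigl((-\log r)^{\alpha - 1}\bigr) = o\bigl((-\log r)^\alpha\bigr)$ by direct estimation. Since $\varphi_{\alpha, \eps} \le \varphi_\alpha$ pointwise, the upper bound (II) is immediate; for the lower bound (III), the hypothesis $\eps < r$ ensures $|\bx|, |\br - \bx| > \eps$ throughout the bulk, so the cut-off is inactive there and the same estimate persists.

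Part (IV) then follows by dividing the sharp asymptotic at $r_1$ and at $r_2$: the ratio is $(\log r_1/\log r_2)^\alpha$ times $(1+o(1))/(1-o(1))$, which is bounded above by $1+\delta$ once $r_0$ is small. The only case needing extra care is $r_1 \le \eps \le r_2$, where I would use monotonicity from (I) to write $U_{\alpha, \eps}(r_1) \le U_{\alpha, \eps}(0) = \int \varphi_{\alpha, \eps}(|\by|)^2\,\dd\by = c_{\alpha, k}(-\log \eps)^\alpha(1 + o(1)) \le c_{\alpha, k}(-\log r_1)^\alpha(1 + o(1))$ and then proceed as before. The principal analytic obstacle I anticipate is the uniform-in-$\alpha$ control of the near-region contributions, particularly where $|e_1 - \by| \to 0$ and the logarithm $-\log(r|e_1 - \by|)$ grows without bound; handling this cleanly requires pairing such subregions with the shrinking radial weight $|e_1 - \by|^{-k/2}$ and exploiting the cut-off of $\varphi_\alpha$ at $1/e$ to ensure these tails remain below the main term.
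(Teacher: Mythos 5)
Your plan is correct and follows essentially the same route as the paper: both proofs reduce all four parts to the uniform sharp asymptotic $U_{\alpha,\eps}(r)=c_{\alpha,k}(-\log r)^{\alpha}(1+o(1))$ (the paper's Proposition~\ref{prop:singEst}), obtained from the exact polar-coordinate computation $\int_{r<|\bx|<1/e}\varphi_{\alpha}^2(|\bx|)\dd\bx=c_{\alpha,k}((-\log r)^{\alpha}-1)$ together with the observation that neighbourhoods of the two singularities contribute only $O((-\log r)^{\alpha-1})$, uniformly in $\alpha\in(0,\alpha_0)$. The only cosmetic differences are that the paper splits the convolution integral by the first coordinate $x_1$ (a reflection/monotonicity trick) rather than by rescaled distance to the two singularities, and that it cites \cite[Lemma 6.6]{GKM} for the layer-cake monotonicity argument you spell out for part \textbf{(I)}.
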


We will postpone the proof of Proposition \ref{prop:UAEps} until Appendix \ref{appendix}. 

\subsection{Definition and properties of $\En_{\alpha, \eps}$: behaviour under images and convolutions} \label{subsection:En}

We are ready to define the energy 
mentioned in \eqref{EnInt}.

\begin{definition} \label{def:En}
    \begin{equation*}
        \En_{\alpha, \eps} (\msp) = \iint_{M^2} U_{\alpha, \eps} (d(x, z)) d \msp(x) d \msp(z).
    \end{equation*}
\end{definition}

Let us point out a few useful properties of $\En_{\alpha, \eps}$. We start with a precise estimate for the energy of an arbitrary measure $\msp$ for given $\alpha$ and $\eps$ (as mentioned in part \ref{i} of the outline):

\begin{lemma} \label{lm:EnUpperBound}
    For every $\alpha > 0$, every $\msp \in \mM$ and every $0 < \eps < 1/e$ we have the following upper bound:
    \begin{equation} \label{EnUpperBound}
        \En_{\alpha, \eps} (\msp) \le (\omega_k + c_{\alpha, k}) (- \log(\eps))^{\alpha}
    \end{equation}
\end{lemma}

\begin{proof}
    A straightforward computation shows that 
    \begin{equation*}
        \En_{\alpha, \eps} (\msp) \le U_{\alpha, \eps} (0) = \int_{0 < |\bar{x}| < \eps} \varphi_{\alpha}^2 (\eps) \dd \bar{x} + \int_{\eps < |\bar{x}| < \frac{1}{e}} \varphi_{\alpha}^2 (|\bar{x}|) \dd \bar{x}.
    \end{equation*}
    Both summands can be computed exactly. For the first one we substitute the definition of $\varphi_{\alpha} (\eps)$:
    \begin{equation*}
        \int_{0 < |\bar{x}| < \eps} \varphi_{\alpha}^2 (\eps) \dd \bar{x} = \frac{\omega_k \eps^k (- \log(\eps))^{\alpha - 1}}{\eps^k} = \omega_k (- \log(\eps))^{\alpha - 1}.
    \end{equation*}
    For the second one we use spherical coordinates (see Lemma \ref{lm:partInt} for details): 
    \begin{equation*}
        \int_{\eps < |\bar{x}| < \frac{1}{e}} \varphi_{\alpha}^2 (|\bar{x}|) \dd \bar{x} = c_{\alpha, k} ((-\log(\eps))^{\alpha} - 1).
    \end{equation*}
    Adding together the two inequalities above we arrive to \eqref{EnUpperBound}.
    
\end{proof}

Next, we show a Markov type inequality that estimates $\msp(B_{r}(x))$ through $\En_{\eps, \alpha} (\msp)$:

\begin{lemma} \label{lm:EnLowerBound}
    For every $\alpha > 0$ there exists $C_{\alpha, k} < \infty$ and $\eps_0 > 0$, such that for every $0 < \eps < \eps_0$, every $\msp \in \mM$, every $0 < \eps < r < \frac{1}{e}$ and every ball $B_{r}(x) \subset M$ the following holds:
    \begin{equation} \label{EnVSlogHol}
        \msp (B_{r}(x)) \le C_{\alpha, k} \sqrt{\En_{\alpha, \eps} (\msp)} (-\log(r))^{-\frac{\alpha}{2}}.
    \end{equation}
\end{lemma}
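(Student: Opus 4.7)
The plan is to bound $\En_{\alpha,\eps}(\msp)$ from below by restricting its defining double integral to pairs of points inside $B_r(x_0)$, and then apply Properties (I) and (III) of Proposition \ref{prop:UAEps}. Set $\mu := \msp(B_r(x_0))$. For any two points $x,z \in B_r(x_0)$, the triangle inequality gives $d(x,z) \le 2r$, and the monotonicity of $U_{\alpha,\eps}$ yields
\[
\En_{\alpha,\eps}(\msp) \;\ge\; \iint_{B_r(x_0)\times B_r(x_0)} U_{\alpha,\eps}(d(x,z))\, d\msp(x)\,d\msp(z) \;\ge\; U_{\alpha,\eps}(2r)\,\mu^2.
\]
Thus the whole task reduces to a sharp lower bound on $U_{\alpha,\eps}(2r)$.

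Next I would apply Property (III) with some $\alpha_0 > \alpha$ to produce $r_0,\eps_0 > 0$ such that $U_{\alpha,\eps}(2r) \ge \tfrac{c_{\alpha,k}}{2}(-\log(2r))^\alpha$ whenever $\eps < \eps_0$ and $\eps < 2r < r_0$. Both of these last conditions are automatic once $\eps < r$ (given by hypothesis) and $r < r_0/2$. By shrinking $r_0$ below $1/2$ we ensure $r \le 1/4$, so $-\log(2r) \ge \tfrac{1}{2}(-\log r)$, and substituting gives
\[
\mu \;\le\; \Bigl(\tfrac{2^{\alpha+1}}{c_{\alpha,k}}\Bigr)^{1/2}\sqrt{\En_{\alpha,\eps}(\msp)}\,(-\log r)^{-\alpha/2}
\]
in the range $\eps < r < r_0/2$. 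To cover the remaining bounded range $r \in [r_0/2, 1/e)$, I would use that $(-\log r)^{-\alpha/2}$ is bounded below by a positive constant there, while $U_{\alpha,\eps}(2r)$ admits a uniform positive lower bound as $\eps \downarrow 0$ on compact sub-intervals of $(0,2/e)$ (since $\varphi_{\alpha,\eps} \to \varphi_\alpha$ pointwise and $U_\alpha$ is continuous and positive there). Enlarging $C_{\alpha,k}$ absorbs these remaining constants.

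The only step with genuine content is the lower bound encoded in Property (III) of Proposition \ref{prop:UAEps}, whose proof is deferred to the appendix. Granted that, the argument above is just the monotonicity bound combined with careful bookkeeping of constants, and I do not anticipate any serious obstacle.
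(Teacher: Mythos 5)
Your argument is essentially the paper's: restrict the energy integral to $B_r(x)\times B_r(x)$, use monotonicity to get $\En_{\alpha,\eps}(\msp)\ge U_{\alpha,\eps}(2r)\,\msp(B_r(x))^2$, and invoke Part \textbf{(III)} of Proposition \ref{prop:UAEps} for small $r$; the paper then simply absorbs the remaining range of $r$ into the constant $C_{\alpha,k}$.

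One detail in your treatment of the range $r\in[r_0/2,1/e)$ does not work as stated: $U_{\alpha,\eps}(2r)$ does \emph{not} admit a uniform positive lower bound there, since $2r$ ranges over $[r_0,2/e)$, which is not a compact subinterval of $(0,2/e)$, and $U_{\alpha,\eps}(s)\to 0$ as $s\uparrow 2/e$ (the two balls of radius $1/e$ in the convolution have vanishing overlap; indeed $U_{\alpha,\eps}(s)=0$ for $s\ge 2/e$). So the bound $\msp(B_r(x))^2\le \En_{\alpha,\eps}(\msp)/U_{\alpha,\eps}(2r)$ degenerates as $r\to 1/e$. The correct way to close this range is to note instead that $\En_{\alpha,\eps}(\msp)$ is bounded below by a positive constant uniformly in $\msp\in\mM$ and in small $\eps$ (cover $M$ by finitely many balls of radius $\frac{1}{2e^2}$, pick one carrying mass at least $1/N$, and use $U_{\alpha,\eps}(e^{-2})>0$); combined with the trivial bound $\msp(B_r(x))\le 1$ and the fact that $(-\log r)^{-\alpha/2}$ is bounded below on $[r_0/2,1/e)$, enlarging $C_{\alpha,k}$ then finishes the proof. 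This is a minor, easily repaired slip in an otherwise correct reproduction of the paper's argument.
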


\begin{proof}
    Let us choose $\eps_0$ such that Part \textbf{(III)} of Proposition \ref{prop:UAEps} is applicable. Then for small enough $r$ applying Markov inequality to the definition of $\En_{\alpha, \eps}$ we obtain:
    \[
        \En_{\alpha,\eps}(\msp) \ge U_{\alpha,\eps} (2r) \cdot \msp(B_{r}(x))^2 \ge \frac{c_{\alpha, k}}{2}(-\log(2r))^{\alpha} \cdot \msp(B_{r}(x))^2.
    \]
    Choosing $C_{\alpha, k}$ big enough we can ensure that (\ref{EnVSlogHol}) holds for all $r$ such that $\eps < r < 1/e$.
\end{proof}

Now we need some preparations in order to describe the energy $\En_{\alpha, \eps} (\mgr * \msp)$ that we get after one step of our random dynamics. The following estimates will be useful for proving the contraction mentioned in part \ref{iii} of the outline. We start with the following series of statements describes the change of the energy $\En_{\alpha, \eps} (\msp)$ after applying a H\"older continuous homeomorphism.

\begin{proposition} \label{prop:EnChangeHol}
    For every $\delta > 0$ there exists $\eps_0 > 0$, such that for every $0 < \alpha < 1$ there exist $A = A (\alpha, \delta)$, such that for every $0 < \eps < \eps_0$, every $f \in \Hol(M)$ and every $\msp \in \mM$ the following holds:
    \begin{equation} \label{EnOneStep}
        \En_{\alpha, \eps} (f_* \msp) \le \Lambda(f)^{\alpha} \left( (1 + \delta) \En_{\alpha, \eps} (\msp) + A \right).
    \end{equation}
\end{proposition}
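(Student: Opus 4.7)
The plan is to apply the quantitative H\"older lower bound for $f^{-1}$ under the integral defining $\En_{\alpha,\eps}(f_*\msp)$, then convert the resulting distortion into the claimed multiplicative factor by invoking the decay properties of $U_{\alpha,\eps}$ collected in Proposition \ref{prop:UAEps}. By the change of variables for a pushforward,
\begin{equation*}
    \En_{\alpha,\eps}(f_*\msp) = \iint_{M \times M} U_{\alpha,\eps}\bigl(d(f(x), f(z))\bigr)\, d\msp(x)\, d\msp(z),
\end{equation*}
and the defining inequalities of $L(f)$ and $\gamma(f)$ give
\begin{equation*}
    d(f(x), f(z)) \ge L(f)^{-2/\gamma(f)} d(x,z)^{2/\gamma(f)} =: r_1(x,z).
\end{equation*}
Monotonicity of $U_{\alpha,\eps}$ (Part \textbf{(I)}) then bounds the integrand pointwise by $U_{\alpha,\eps}(r_1(x,z))$. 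Without loss of generality I assume $L(f) \ge 1$, since replacing $L(f)$ by $\max(L(f),1)$ only weakens the hypothesis and strengthens the conclusion.

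Given $\delta > 0$, I choose $r_0 = r_0(\delta)$ and $\eps_0 = \eps_0(\delta)$ from Parts \textbf{(II)} and \textbf{(IV)} applied with $\alpha_0 = 1$, further shrinking $r_0$ so that $|\log r_0| \ge 1$. The integration domain splits into the diagonal region $D := \{r := d(x,z) < r_0\}$ and its complement. On $D$ one has $r_1 \le r \le r_0$ (valid because $\gamma(f) \le 1$ and $L(f) \ge 1$ force $L(f)^{-2/\gamma(f)} r^{2/\gamma(f)} \le r$), so Part \textbf{(IV)} gives
\begin{equation*}
    U_{\alpha,\eps}(r_1) \le (1+\delta) \Bigl(\tfrac{\log r_1}{\log r}\Bigr)^\alpha U_{\alpha,\eps}(r).
\end{equation*}
The identity $|\log r_1| = \tfrac{2\log L(f) + 2|\log r|}{\gamma(f)} \le \tfrac{2(1 + \log L(f))}{\gamma(f)} |\log r|$, valid because $|\log r| \ge 1$, then yields the main contribution $(1+\delta) \bigl(\tfrac{2(1+\log L(f))}{\gamma(f)}\bigr)^\alpha \En_{\alpha,\eps}(\msp)$ upon integration.

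On the complement $\{r \ge r_0\}$, a combination of Parts \textbf{(I)} and \textbf{(II)} produces a uniform bound $U_{\alpha,\eps}(r_1) \le 2 c_{\alpha,k} \max(|\log r_1|, |\log r_0|)^\alpha$, and an elementary estimate analogous to the diagonal case bounds this by $A(\alpha,\delta) \bigl(\tfrac{2(1+\log L(f))}{\gamma(f)}\bigr)^\alpha$, with $A(\alpha,\delta)$ depending only on $\alpha$ and $\delta$ (through $c_{\alpha,k}$, $|\log r_0|$, and $\diam(M)$); since the $\msp\otimes\msp$-mass of this region is at most $1$, summing this with the diagonal contribution establishes \eqref{EnOneStep}. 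The main technical point to watch is that the thresholds $r_0, \eps_0$ supplied by Proposition \ref{prop:UAEps} can be chosen uniformly over $\alpha \in (0,1)$, depending only on $\delta$; this is built into the formulations of Parts \textbf{(II)} and \textbf{(IV)} and is precisely what gives $A = A(\alpha,\delta)$ as required.
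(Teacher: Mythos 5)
Your argument is correct and follows essentially the same route as the paper's proof: the same splitting of the energy integral at a threshold $r_0$ supplied by Proposition \ref{prop:UAEps}, the same use of Part \textbf{(IV)} together with the lower bound $d(f(x),f(z)) \ge (d(x,z)/L(f))^{2/\gamma(f)}$ on the near-diagonal part, and the same use of Parts \textbf{(I)} and \textbf{(II)} to absorb the off-diagonal part into the constant $A = 2c_{\alpha,k}(-\log r_0)^{\alpha}$. The only differences are cosmetic (you substitute the explicit lower bound $r_1(x,z)$ via monotonicity where the paper uses $r_1=\min(d(f(x),f(z)),r_2)$, and you make explicit the harmless normalization $L(f)\ge 1$ that the paper uses implicitly).
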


\begin{proof}

    According to the Part \textbf{(IV)} of Proposition \ref{prop:UAEps} we can choose $\eps_0 > 0$ and $r_0 > 0$, such that for every $0 < \alpha < 1$, every $0 < \eps < \eps_0$ and every $0 < r_1 \le r_2 \le r_0$ we have
    \begin{equation*}
        U_{\alpha, \eps} (r_1) \le (1 + \delta) \left( \frac{\log(r_1)}{\log(r_2)} \right)^{\alpha} U_{\alpha, \eps} (r_2).
    \end{equation*}
    Let us split the integral from the definition of energy the $\En_{\alpha, \eps} (f_* \msp)$ in the following way:
    \begin{multline*}
        \En_{\alpha, \eps} (f_* \msp) = \iint_{M \times M} U_{\alpha, \eps} (d(x, z)) \dd f_* \msp(x) \dd f_* \msp(z) = \\
        = \iint_{M \times M} U_{\alpha, \eps} (d(f(x), f(z))) \dd \msp(x) \dd \msp(z) = \\
        = \iint_{d(x, z) < r_0} U_{\alpha, \eps} (d(f(x), f(z))) \dd \msp(x) \dd \msp(z) + \\
        + \iint_{d(x, z) \ge r_0} U_{\alpha, \eps} (d(f(x), f(z))) \dd \msp(x) \dd \msp(z).
    \end{multline*}
    To estimate the first summand we take $r_2 = d(x, z)$ and 
    $r_1 = \min(d(f(x), f(z)), r_2)$ and we deduce that
    \begin{multline*}
        U_{\alpha, \eps} (d(f(x), f(z))) \le U_{\alpha, \eps} (r_1) \le (1 + \delta) \left( \frac{\log(r_1)}{\log(r_2)} \right)^{\alpha} U_{\alpha, \eps} (r_2) \le \\
        \le (1 + \delta) \left( \frac{\log(d(f(x), f(z)))}{\log(d(x, z))} \right)^{\alpha} U_{\alpha, \eps} (d(x, z))
    \end{multline*}
    for any $x, z \in M$, such that $d(x, z) \le r_0$. By definition of $\gamma(f)$ and $L(f)$ we know that
    \begin{equation*}
        d(f(x), f(z)) \ge \left( \frac{d(x, z)}{L(f)} \right)^{\frac{2}{\gamma(f)}}
    \end{equation*}
    for any $x, z \in M$. After substituting that into previous inequality we arrive to
    \begin{multline*}
        U_{\alpha, \eps} (d(f(x), f(z))) \le \\
        \le (1 + \delta) \left( \frac{2 (\log(d(x, z)) - \log(L(f)))}{\gamma(f) \log(d(x, z))} \right)^{\alpha} U_{\alpha, \eps} (d(x, z)) \le \\
        \le (1 + \delta) \Lambda(f)^{\alpha} U_{\alpha, \eps} (d(x, z)).
    \end{multline*}
    For the first summand the last inequality gives us the following:
    \begin{multline*}
        \iint_{d(x, z) < r_0} U_{\alpha, \eps} (d(f(x), f(z))) \dd \msp(x) \dd \msp(z) \le \\
        \le (1 + \delta) \iint_{M \times M} \Lambda(f)^{\alpha} U_{\alpha, \eps} (d(x, z)) \dd \msp(x) \dd \msp(z) \le \\
        \le (1 + \delta) \Lambda(f)^{\alpha} \En_{\alpha, \eps} (\msp).
    \end{multline*}
    Let us take $r_0$ and $\eps_0$ small enough to ensure that the conculsion of Part \textbf{(II)} of Proposition \ref{prop:UAEps} also holds. That allows us to estimate the second summand as follows:
    \begin{multline*}
        \iint_{d(x, z) \ge r_0} U_{\alpha, \eps} (d(f(x), f(z))) \dd \msp(x) \dd \msp(z) \le \\
        \le \iint_{M \times M} U_{\alpha, \eps} \left( \left( \frac{r_0}{L(f)} \right)^{\frac{2}{\gamma(f)}} \right) \dd \msp(x) \dd \msp(z) \le \\
        \le 2 c_{\alpha, k} \left( - \log \left( \left( \frac{r_0}{L(f)} \right)^{\frac{2}{\gamma(f)}} \right) \right)^{\alpha} \le \\
        \le 2 c_{\alpha, k} (- \log(r_0))^{\alpha} \Lambda(f)^{\alpha}.
    \end{multline*}
    Taking $A = 2 c_{\alpha, k} (- \log(r_0))^{\alpha}$ finishes the proof.
\end{proof}

An immediate corollary can be formulated as follows:

\begin{corollary} \label{cor:EnHol}
    For every $\delta > 0$ there exists $\eps_0 > 0$, such that for every $0 < \alpha < 1$ there exists $C < \infty$, such that for every $0 < \eps < \eps_0$, every $f \in \Hol(M)$ and every $\msp \in \mM$, such that $\En_{\alpha, \eps} (\msp) > C$ the following holds:
    \begin{equation} \label{EnHolOneStep}
        \Lambda(f)^{-\alpha} \frac{1}{1 + \delta}
        \le \frac{\En_{\alpha, \eps} (f_* \msp)}{\En_{\alpha, \eps} (\msp)}
        \le \Lambda(f)^{\alpha} (1 + \delta).
    \end{equation}
\end{corollary}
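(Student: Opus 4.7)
The plan is to obtain the two bounds of the corollary separately: the upper bound directly from Proposition~\ref{prop:EnChangeHol} applied to $(f,\msp)$, and the lower bound by applying the same proposition to $(f^{-1},\,f_*\msp)$, using $(f^{-1})_*(f_*\msp)=\msp$. For brevity, write $K := \bigl(\tfrac{2(1+\log L(f))}{\gamma(f)}\bigr)^{\alpha}$, so the claim is $K^{-1}/(1+\delta) \le \En_{\alpha,\eps}(f_*\msp)/\En_{\alpha,\eps}(\msp) \le K(1+\delta)$.

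For the upper bound, I would invoke Proposition~\ref{prop:EnChangeHol} with the tighter tolerance $\delta/2$ in place of $\delta$, producing $A=A(\alpha,\delta/2)$ and
\[
\En_{\alpha,\eps}(f_*\msp) \le K\bigl((1+\delta/2)\En_{\alpha,\eps}(\msp)+A\bigr).
\]
The crucial observation is that $K$ multiplies both the main and the error terms, so dividing by $\En_{\alpha,\eps}(\msp)>C$ and choosing $C$ so that $A/C\le\delta/2$ immediately upgrades this to $\En_{\alpha,\eps}(f_*\msp)/\En_{\alpha,\eps}(\msp)\le K(1+\delta)$.

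For the lower bound, I would apply Proposition~\ref{prop:EnChangeHol} to the pair $(f^{-1},f_*\msp)$. The key preliminary observation is that the definitions of $\gamma(f)$ (a $\min$ of the forward and backward H\"older exponents) and $L(f)$ (the $\max$ of the corresponding constants) are manifestly symmetric under $f\mapsto f^{-1}$; a direct substitution $(x,y)\mapsto(f(x),f(y))$ swaps the two quantities in each $\min$/$\max$, giving $\gamma(f^{-1})=\gamma(f)$ and $L(f^{-1})=L(f)$. Therefore the coefficient produced by the proposition is again exactly $K$, yielding
\[
\En_{\alpha,\eps}(\msp) \le K\bigl((1+\delta/2)\En_{\alpha,\eps}(f_*\msp)+A\bigr),
\]
and solving for $\En_{\alpha,\eps}(f_*\msp)$ produces a bound of the form $(\En_{\alpha,\eps}(\msp)/K-A)/(1+\delta/2)$.

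The main point to handle carefully is the calibration of the single threshold $C$ so that the additive error $A$ can be absorbed in both bounds simultaneously: in the upper-bound rearrangement the error appears as $KA/\En_{\alpha,\eps}(\msp)$ and the $K$ cancels cleanly with the target $K(1+\delta)$, while in the lower-bound rearrangement one has to argue that taking $\delta/2$ strictly below $\delta$ leaves a gap of size $\delta/2$ on the right-hand side that is big enough to swallow the remaining error. Once the symmetry $\gamma(f^{-1})=\gamma(f)$, $L(f^{-1})=L(f)$ is in hand, the rest is a short algebraic manipulation of two copies of Proposition~\ref{prop:EnChangeHol}.
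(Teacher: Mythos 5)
The paper offers no written proof of this corollary (it is introduced as ``an immediate corollary''), and your overall strategy --- the upper bound straight from Proposition~\ref{prop:EnChangeHol}, the lower bound from the same proposition applied to the pair $(f^{-1},f_*\msp)$, using the symmetry $\gamma(f^{-1})=\gamma(f)$ and $L(f^{-1})=L(f)$ --- is clearly the intended one; the paper itself performs exactly this $f\mapsto f^{-1}$ substitution in the proof of Corollary~\ref{cor:EnOneStepForm2}. The symmetry claim is correct (the substitution $(x,y)\mapsto(f(x),f(y))$ swaps the two entries of the $\min$ defining $\gamma$ and, since the exponents then agree, the two entries of the $\max$ defining $L$). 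Your treatment of the upper bound is also complete: writing $K=\bigl(2(1+\log L(f))/\gamma(f)\bigr)^{\alpha}$, the additive error enters the ratio as $KA/\En_{\alpha,\eps}(\msp)\le KA/C$, the factor $K$ matches the factor in the target $K(1+\delta)$, and $C=2A/\delta$ works uniformly in $f$.

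The lower bound, however, does not calibrate the way you assert. Proposition~\ref{prop:EnChangeHol} for $(f^{-1},f_*\msp)$ with tolerance $\delta/2$ gives
\begin{equation*}
\En_{\alpha,\eps}(f_*\msp)\;\ge\;\frac{\En_{\alpha,\eps}(\msp)}{K(1+\delta/2)}-\frac{A}{1+\delta/2},
\end{equation*}
and comparing with the target $\En_{\alpha,\eps}(\msp)/\bigl(K(1+\delta)\bigr)$ shows that the available slack is
\begin{equation*}
\frac{\En_{\alpha,\eps}(\msp)}{K}\cdot\frac{\delta/2}{(1+\delta/2)(1+\delta)},
\end{equation*}
i.e.\ proportional to $\En_{\alpha,\eps}(\msp)/K$, not to $\En_{\alpha,\eps}(\msp)$. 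To swallow the $f$-independent error $A$ one therefore needs $\En_{\alpha,\eps}(\msp)\ge 2K(1+\delta)A/\delta$, and since $K$ is unbounded over $f\in\Hol(M)$, the threshold produced by this argument depends on $f$, whereas the statement demands a single $C$ valid for all $f$. Your phrase ``a gap of size $\delta/2$ on the right-hand side'' conflates a gap measured against $\En_{\alpha,\eps}(\msp)/K$ with one measured against $\En_{\alpha,\eps}(\msp)$; this is precisely where the argument breaks for large $K(f)$. To match what the computation actually yields you should either record the lower bound in the additive form $\En_{\alpha,\eps}(f_*\msp)\ge\frac{1}{1+\delta}\bigl(K^{-1}\En_{\alpha,\eps}(\msp)-A\bigr)$ (the $(\delta,C)$-closeness format used elsewhere in the paper), or restrict to $f$ with $L(f),\gamma(f)^{-1}<R$ and let $C$ depend on $R$, which is how the two-sided ratio bound is in fact invoked in Corollary~\ref{cor:tEnChangeOneStepHol} and Proposition~\ref{prop:WassEst}. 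The difficulty is arguably inherited from the statement itself rather than introduced by you, but as written your calibration step does not establish the corollary with a uniform $C$.
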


Another corollary of this statement that would be convenient to use is
\begin{corollary} \label{cor:EnOneStepForm2}
    There exists $\eps_0 > 0$, such that for every $0 < \alpha < 1$ there exists $C < \infty$, such that for every $0 < \eps < \eps_0$, every $f \in \Hol(M)$ and every $\msp \in \mM$ the following holds:
    \begin{equation} \label{e:EnOneStepUnif}
        \En_{\alpha, \eps} (f_* \msp) \le \max \left( 2\Lambda(f)^{\alpha} \En_{\alpha, \eps} (\msp), C \right).
    \end{equation}
\end{corollary}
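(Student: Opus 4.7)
The plan is to obtain Corollary \ref{cor:EnOneStepForm2} as a near-immediate algebraic consequence of Proposition \ref{prop:EnChangeHol}, by trading the additive constant $A$ inside the bracket either for a slightly larger multiplicative factor on $\En_{\alpha,\eps}(\msp)$ or for an absolute additive constant $C$, via a dichotomy on the size of $\En_{\alpha,\eps}(\msp)$ relative to $A$.

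Concretely, I would fix $\delta = 1/2$ in Proposition \ref{prop:EnChangeHol}; this supplies $\eps_0 > 0$ and a constant $A = A(\alpha, 1/2)$ depending only on $\alpha$, such that, writing $K(f) := \left( 2(1+\log L(f))/\gamma(f) \right)^{\alpha}$, one has
\begin{equation*}
    \En_{\alpha,\eps}(f_*\msp) \le K(f) \left[ \tfrac{3}{2}\En_{\alpha,\eps}(\msp) + A \right]
\end{equation*}
for every $0 < \eps < \eps_0$, every $f \in \Hol(M)$, and every $\msp \in \mM$. Now split on whether $\En_{\alpha,\eps}(\msp) \ge 2A$ or not. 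In the large regime $\En_{\alpha,\eps}(\msp) \ge 2A$, the additive $A$ is dominated by $\tfrac{1}{2}\En_{\alpha,\eps}(\msp)$, so the bracket is at most $2\En_{\alpha,\eps}(\msp)$ and the inequality collapses to $\En_{\alpha,\eps}(f_*\msp) \le 2K(f)\En_{\alpha,\eps}(\msp)$, matching the first argument of the max. In the complementary regime $\En_{\alpha,\eps}(\msp) < 2A$, the bracket is at most $4A$, yielding $\En_{\alpha,\eps}(f_*\msp) \le 4A K(f)$.

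The one step that needs attention is absorbing this small-energy bound into a constant $C$ depending only on $\alpha$ (and $\eps_0$) and not on $f$, since $K(f)$ is unbounded across $\Hol(M)$. The plan is to exploit the max on the right-hand side of the target inequality: when $K(f)$ is large the multiplicative term $2K(f)\En_{\alpha,\eps}(\msp)$ already takes over (the effective threshold scales with $K(f)$), while for $K(f)$ bounded the quantity $4AK(f)$ is directly absorbed by $C$. Setting $C$ equal to an appropriate multiple of $A$ therefore yields the stated bound. Everything else is a straightforward rearrangement; no further input from Proposition \ref{prop:UAEps} or the other lemmas is needed beyond what already went into Proposition \ref{prop:EnChangeHol}.
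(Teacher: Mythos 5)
Your Case~1 is fine, but your handling of Case~2 does not close, and this is a genuine gap rather than a bookkeeping issue. Writing $K(f):=\left(2(1+\log L(f))/\gamma(f)\right)^{\alpha}$ as you do: in the regime $\En_{\alpha,\eps}(\msp)<2A$ you are left with $\En_{\alpha,\eps}(f_*\msp)\le 4A\,K(f)$, and neither branch of the max covers this when $K(f)$ is large. Absorbing $4A\,K(f)$ into $C$ is impossible because $K(f)$ is unbounded over $\Hol(M)$, and dominating it by the first branch would require $4A\,K(f)\le 2K(f)\En_{\alpha,\eps}(\msp)$, i.e.\ $\En_{\alpha,\eps}(\msp)\ge 2A$ --- exactly the negation of the case you are in. Your remark that ``when $K(f)$ is large the multiplicative term takes over'' is therefore circular: the threshold at which that term takes over is $\En_{\alpha,\eps}(\msp)\ge 2A$, independent of $K(f)$. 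One could try to rescue the argument via a uniform positive lower bound $\En_{\alpha,\eps}(\msp)\ge c_0>0$ over all probability measures (which is available for small $\eps$ by a covering argument), but even then one only obtains the first branch with the constant $\tfrac32+A/c_0$ in place of $2$, so the statement as written is not proved this way.

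The paper runs the estimate backwards so that the energy threshold sits on $f_*\msp$ rather than on $\msp$: it applies the lower-bound half of Corollary~\ref{cor:EnHol} (with $\delta=\tfrac12$) to the measure $\tilde\msp:=f_*\msp$ and the map $\tilde f:=f^{-1}$. The high-energy hypothesis of that corollary then concerns $\En_{\alpha,\eps}(f_*\msp)$, which is precisely the quantity being bounded. If $\En_{\alpha,\eps}(f_*\msp)\le C$ the second branch of the max is immediate; if $\En_{\alpha,\eps}(f_*\msp)>C$ one gets $\En_{\alpha,\eps}(\msp)=\En_{\alpha,\eps}(f^{-1}_*f_*\msp)\ge\tfrac23\,K(f^{-1})^{-1}\En_{\alpha,\eps}(f_*\msp)$, which rearranges to the first branch after noting that $L(f^{-1})=L(f)$ and $\gamma(f^{-1})=\gamma(f)$ by the symmetric definitions of these constants. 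You should restructure your argument along these lines, i.e.\ perform the dichotomy on $\En_{\alpha,\eps}(f_*\msp)$ and use the inverse map, rather than conditioning on $\En_{\alpha,\eps}(\msp)$.
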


\begin{proof}
    Applying Corollary \ref{cor:EnHol} to $\delta = \frac{1}{2}$, $\tilde{\msp} := f_* \msp$ and $\tilde{f} = f^{-1}$ we conclude that there exists $C < \infty$, such that if $\En_{\alpha, \eps} (\tilde{\msp}) = \En_{\alpha, \eps} (f_* \msp) > C$ then 
    \[
        \En_{\alpha, \eps} (f_*^{-1} f_* \msp) \ge \frac{1}{1 + \frac{1}{2}} \Lambda \left( f^{-1} \right)^{-\alpha} \En_{\alpha, \eps} (f_* \msp).
    \]
    It remains to recall that $\Lambda(f) = \Lambda(f^{-1})$ and the inequality (\ref{e:EnOneStepUnif}) follows.
\end{proof}

\subsection{Definition of $\tEn_{\alpha, \eps}$ and comparison to $\En_{\alpha, \eps}$} \label{subsection:tEn}
Following the technique from \cite{GKM} we introduce another energy $\tEn_{\alpha, \eps} (\msp)$. The intuition behind it can be described as follows: on one hand, for singular enough $\msp$ (such that $\En_{\alpha, \eps} (\msp)$ is big) the values $\En_{\alpha, \eps} (\msp)$ and $\tEn_{\alpha, \eps} (\msp)$ will be close to each other, so we can use $\tEn_{\alpha, \eps} (\msp)$ to estimate $\En_{\alpha, \eps} (\msp)$. On the other hand, the energy $\tEn_{\alpha, \eps}$ can be represented as a square of the norm of an $L^2(M, \Leb)$ vector, which allows us to use geometry of this Hilbert space when working with $\tEn_{\alpha, \eps}$. This makes $\tEn_{\alpha, \eps}$ a crucial tool for proving the contraction property described in part \ref{iii} of the outline.

We start by defining the corresponding $L^2(M, \Leb)$ vector mentioned above:
\begin{definition} \label{def:dens}
    For $\msp \in \mM$ define
    \begin{equation} \label{eq:dens-a-e}
        \dens_{\alpha, \eps} [\msp] (y) = \int_{M} \varphi_{\alpha, \eps} (d(x, y)) \dd \msp(x).\\
    \end{equation}
    It will be convenient to have a notation for a measure that has density with respect to $\Leb$, which is equal to $\dens_{\alpha, \eps} [\msp]$:
    \begin{equation*}
        \mT_{\alpha, \eps} [\msp] = \rho_{\alpha, \eps}^2 [\msp] (y) \dd \Leb(y).
    \end{equation*}
\end{definition}
Next, we define $\tEn_{\alpha, \eps}$ and a normalized version of the measure $\Theta_{\alpha, \eps}$:
\begin{definition} \label{def:tEn}
    For $\msp \in \mM$ define:
    \begin{equation} \label{eq:tEn}
        \tEn_{\alpha, \eps} (\msp) = \mT_{\alpha, \eps} [\msp] (M);
    \end{equation}
    and
    \begin{equation*}
        \mTn_{\alpha, \eps} [\msp] = \frac{\Theta_{\alpha, \eps} [\msp]}{\tEn_{\alpha, \eps} (\msp)}.
    \end{equation*}
\end{definition}


In order to compare $\tEn_{\alpha, \eps}(\msp)$ to $\En_{\alpha, \eps}(\msp)$ we show that the former can be represented as an integral of a certain kernel over $\msp \times \msp$. Later we will show that this kernel is close to $U_{\alpha, \eps} (d(x, z))$.
\begin{lemma} \label{l:triple}
    \begin{equation*}
        \tEn_{\alpha,\eps}(\msp) = \iint_{M\times M} K_{\alpha,\eps}(x,z) \dd \msp(x) \dd \msp(z),
    \end{equation*}
    where
    \begin{equation} \label{eq:K-a-e}
        K_{\alpha,\eps}(x,z) := \int_M \varphi_{\alpha,\eps}(d(x,y)) \varphi_{\alpha,\eps}(d(z,y)) \, \dd \, \Leb(y).
    \end{equation}
\end{lemma}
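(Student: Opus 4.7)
The plan is to unfold the definitions and apply Tonelli's theorem. By the definition of $\tEn_{\alpha,\eps}$ together with that of $\mT_{\alpha,\eps}[\msp]$ and $\dens_{\alpha,\eps}[\msp]$, I would first write
\begin{equation*}
    \tEn_{\alpha,\eps}(\msp) = \int_M \dens_{\alpha,\eps}^2[\msp](y) \dd \Leb(y) = \int_M \left( \int_M \varphi_{\alpha,\eps}(d(x,y)) \dd \msp(x) \right)^2 \dd \Leb(y).
\end{equation*}

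Next I would expand the square as a double integral with respect to $\msp\otimes\msp$, obtaining
\begin{equation*}
    \tEn_{\alpha,\eps}(\msp) = \int_M \int_{M\times M} \varphi_{\alpha,\eps}(d(x,y)) \varphi_{\alpha,\eps}(d(z,y)) \dd \msp(x) \dd \msp(z) \dd \Leb(y),
\end{equation*}
and then swap the order of integration to pull the Lebesgue integral inside, yielding exactly the kernel $K_{\alpha,\eps}(x,z)$ from \eqref{eq:K-a-e}.

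The only thing to justify is the applicability of Tonelli's theorem. This is routine: the integrand $\varphi_{\alpha,\eps}(d(x,y))\varphi_{\alpha,\eps}(d(z,y))$ is non-negative and measurable on $M\times M\times M$, so Tonelli applies without an integrability hypothesis and the interchange of integrals is valid. (In fact, $\varphi_{\alpha,\eps}$ is bounded above by $\varphi_\alpha(\eps)$ and supported where $d(x,y) < 1/e$, so both sides are automatically finite, but this is not needed for the identity.) There is no real obstacle here — the statement is essentially a Fubini-type rewriting of $\|\dens_{\alpha,\eps}[\msp]\|_{L^2(M,\Leb)}^2$ as a double integral against $\msp\otimes\msp$ of the self-convolution kernel $K_{\alpha,\eps}$.
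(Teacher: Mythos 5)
Your proof is correct and follows exactly the same route as the paper: substitute the definition of $\dens_{\alpha,\eps}[\msp]$ into \eqref{eq:tEn}, expand the square into a triple integral, and interchange the order of integration. The explicit appeal to Tonelli (nonnegativity of the integrand) is a welcome, if routine, justification that the paper leaves implicit.
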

\begin{proof}
    It suffices to substitute (\ref{eq:dens-a-e}) that defines $\dens_{\alpha,\eps}^2[\msp](y)$, into (\ref{eq:tEn}), obtaining a triple integral
    \begin{equation*} \label{eq:triple}
        \tEn_{\alpha,\eps}(\msp) =
        \iiint_{M\times M\times M} \varphi_{\alpha,\eps}(d(x,y)) \varphi_{\alpha,\eps}(d(z,y)) \, \dd \msp(x) \dd \msp(z) \, \dd \Leb(y),
    \end{equation*}
    and then change the order of integration.
\end{proof}

The following Proposition formalizes the statement that for singular enough $\msp$ the values $\En_{\alpha, \eps} (\msp)$ and $\tEn_{\alpha, \eps} (\msp)$ are close to each other.
\begin{proposition} \label{prop:EnEquiv}
    For every $\alpha > 0$ and every $\delta > 0$ there exists $C > 0$ such that for every $\eps > 0$ if $\En_{\alpha, \eps} (\msp) > C$ or $\tEn_{\alpha, \eps} (\msp) > C$ then one has
    \begin{equation*}
        \frac{\En_{\alpha, \eps} (\msp)}{\tEn_{\alpha, \eps} (\msp)} \in (1 - \delta, 1 + \delta).
    \end{equation*}
\end{proposition}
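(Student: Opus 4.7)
The plan is to show that the two kernels $(x,z) \mapsto U_{\alpha,\eps}(d(x,z))$ and $K_{\alpha,\eps}(x,z)$, which define $\En_{\alpha,\eps}$ and $\tEn_{\alpha,\eps}$ in view of Lemma~\ref{l:triple}, differ only by a small relative factor plus a bounded additive term. Integrating such a pointwise comparison against $d\msp(x)\,d\msp(z)$ would yield
\[
|\tEn_{\alpha,\eps}(\msp) - \En_{\alpha,\eps}(\msp)| \le \tfrac{\delta}{2}\,\En_{\alpha,\eps}(\msp) + C',
\]
from which the desired two-sided ratio bound follows whenever $\En_{\alpha,\eps}(\msp)$ is sufficiently large, and by rearrangement whenever $\tEn_{\alpha,\eps}(\msp)$ is.

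To carry out the comparison I would fix a small scale $\rho>0$, smaller than the injectivity radius of $M$, and treat two regions separately. First, on $\{d(x,z) \ge \rho\}$ the triangle inequality forces at least one of the factors in the integrand defining $K_{\alpha,\eps}$ (respectively $U_{\alpha,\eps}$) to be bounded by $\varphi_{\alpha}(\rho/2)$; combined with the uniform-in-$\eps$ bound
\[
\int_M \varphi_{\alpha,\eps}(d(x,y))\,\dd\Leb(y) \;\le\; C_k \int_0^{1/e} \varphi_{\alpha,\eps}(s)\,s^{k-1}\,\dd s \;<\; C_k',
\]
(the flat regime $s<\eps$ contributes $\eps^{k/2}|\log \eps|^{(\alpha-1)/2}/k$, which stays bounded as $\eps\to 0$), this gives a uniform bound $K_{\alpha,\eps}(x,z),\,U_{\alpha,\eps}(d(x,z)) \le C_\rho$ throughout this region. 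Second, on $\{d(x,z)<\rho\}$ I would work in normal coordinates centred at $x$: writing $y=\exp_x(w)$ and $z' = \exp_x^{-1}(z)$, one has $d(x,y)=|w|$, $d(z,y)=|w-z'|(1+O(\rho))$ for $|w|<\rho$, and $\dd\Leb_M(y) = (1+O(\rho))\,\dd w$. Using the elementary ratio estimate
\[
\frac{\varphi_{\alpha,\eps}((1+\eta)s)}{\varphi_{\alpha,\eps}(s)} = 1 + O(\eta)
\]
for $\eta$ small --- which follows from $\bigl|\tfrac{d}{d\log s}\log\varphi_\alpha(s)\bigr|$ being bounded on $(0,1/e)$ and extends across the cutoff at $s=\eps$ by monotonicity of $\varphi_{\alpha,\eps}$ --- one reduces the truncated integral $\int_{d(x,y)<\rho} \varphi_{\alpha,\eps}(d(x,y))\varphi_{\alpha,\eps}(d(z,y))\,\dd\Leb(y)$ to $(1+O(\rho))$ times the corresponding truncated Euclidean integral, which by rotational symmetry equals the truncation of $U_{\alpha,\eps}(d(x,z))$. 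Adding back the truncated tails (each bounded by $\varphi_\alpha(\rho)\cdot C_k'$ by the same argument as in the first region), this yields
\[
K_{\alpha,\eps}(x,z) = (1+O(\rho))\,U_{\alpha,\eps}(d(x,z)) + O_\rho(1)
\]
pointwise, uniformly in $\eps$.

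Integrating against $\msp\otimes\msp$ and choosing $\rho$ so that the $O(\rho)$ constant is below $\delta/2$ would finish the proof along the lines sketched in the first paragraph. The main obstacle is the pointwise comparison on the near-diagonal region: one must arrange the argument so that the Riemannian-to-Euclidean distortion passes through the highly singular (and $\eps$-dependent) function $\varphi_{\alpha,\eps}$ producing only a $(1+O(\rho))$ relative error with a constant that does \emph{not} degenerate as $\eps\to 0$. The monotonicity and piecewise structure of $\varphi_{\alpha,\eps}$ make this delicate precisely in the transition region where $|w|$ is close to $\eps$, and handling that uniformly is where the technical bulk of the argument will sit.
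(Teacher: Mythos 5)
Your proposal is correct and follows essentially the same route as the paper: the pointwise comparison $K_{\alpha,\eps}(x,z)\approx_{(\delta,C)}U_{\alpha,\eps}(d(x,z))$ is exactly the paper's Lemma~\ref{l:K-U-comp}, proved there by the same near/far decomposition, geodesic coordinates, and the uniform-in-$\eps$ ratio bound $\varphi_{\alpha,\eps}(r_1)/\varphi_{\alpha,\eps}(r_2)\le (r_2/r_1)^{(k+\alpha)/2}$, which is the monotonicity argument you sketch for the cutoff region. The "delicate" transition near $|w|=\eps$ that you flag is handled in the paper precisely by that elementary ratio estimate, so no additional idea is needed.
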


In what follows it will be convenient to use the following notation:
\begin{definition}
    We say that two positive numbers, $A$ and $A'$, are \emph{$(\delta,C)$-close}, if
    \begin{equation*}
        A < (1+\delta)A'+C \quad \text{ and} \quad A'< (1+\delta)A+C;
    \end{equation*}
    this can be equivalently rewritten as
    \begin{equation*}
        \frac{1}{1+\delta} A - \frac{1}{1+\delta}C < A'< (1+\delta)A+C.
    \end{equation*}
    We denote it $A \approx_{(\delta,C)} A'$.
\end{definition}

\begin{remark} \label{r:EnEqForm}
    We can reformulate Proposition \ref{prop:EnEquiv} using notations introduced above: for every $\alpha > 0$ and every $\delta > 0$ there exists $C < \infty$, such that for every $\eps > 0$ and every $\msp \in \mM$ one has
    \begin{equation*}
        \En_{\alpha,\eps}(\msp)\approx_{(\delta,C)}\tEn_{\alpha,\eps}(\msp).
    \end{equation*}
\end{remark}

Joining Proposition \ref{prop:EnEquiv} and Corollary \ref{cor:EnHol} we get
\begin{corollary} \label{cor:tEnChangeOneStepHol}
    For every $\delta > 0$ there exists $\eps_0 > 0$, such that for every $R < \infty$ there exists $\alpha_0 > 0$, such that for every $0 < \alpha < \alpha_0$ there exists $C > 0$ such that for every $f \in \Hol(M)$ with $L(f) < R$ and $\gamma(f)^{-1} < R$, every $0 < \eps < \eps_0$ and every $\msp$, such that $\tEn_{\alpha, \eps}(\msp) > C$ or $\En_{\alpha, \eps}(\msp) > C$ one has
    \begin{equation} \label{eq:tEnChangeOneStepHol}
        \frac{\tEn_{\alpha, \eps} (f_*\msp)}{\tEn_{\alpha, \eps} (\msp)} \in (1-\delta, 1+\delta).
    \end{equation}
\end{corollary}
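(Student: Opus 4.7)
My plan is to decompose the ratio to be estimated as
\[
\frac{\tEn_{\alpha,\eps}(f_*\msp)}{\tEn_{\alpha,\eps}(\msp)} = \frac{\tEn_{\alpha,\eps}(f_*\msp)}{\En_{\alpha,\eps}(f_*\msp)} \cdot \frac{\En_{\alpha,\eps}(f_*\msp)}{\En_{\alpha,\eps}(\msp)} \cdot \frac{\En_{\alpha,\eps}(\msp)}{\tEn_{\alpha,\eps}(\msp)},
\]
controlling the outer two factors by Proposition \ref{prop:EnEquiv} and the middle factor by Corollary \ref{cor:EnHol}. For an auxiliary $\eta=\eta(\delta)>0$, chosen small enough that any product of three numbers, each in $\bigl((1+\eta)^{-2},(1+\eta)^2\bigr)$, still lies in $(1-\delta,1+\delta)$, I will force each factor into a small neighborhood of~$1$. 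The key quantitative observation enabling this for the middle factor is that, under the hypotheses $L(f)<R$ and $\gamma(f)^{-1}<R$, the quantity $\frac{2(1+\log L(f))}{\gamma(f)}$ is uniformly bounded by $M_R:=2R(1+\log R)$, so its $\alpha$-th power tends to $1$ as $\alpha\to 0$.

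Constants are then selected in the order dictated by the quantifier structure of the statement. Given $\delta$, I fix $\eta=\eta(\delta)$ as above and feed it into Corollary \ref{cor:EnHol} to obtain $\eps_0=\eps_0(\eta)$, which I take as the $\eps_0$ of the present corollary; crucially this depends only on $\delta$, since the $\eps_0$ in Corollary \ref{cor:EnHol} is independent of $\alpha$. Given $R$, I choose $\alpha_0=\alpha_0(R,\eta)$ small enough that $M_R^{\alpha_0}<1+\eta$, guaranteeing that the multiplicative constant appearing in Corollary \ref{cor:EnHol} is within $(1+\eta)^2$ of $1$ for every $0<\alpha<\alpha_0$ and every admissible $f$. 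Given such $\alpha$, I let $C_1(\alpha,\eta)$ be the threshold from Corollary \ref{cor:EnHol} and $C_2(\alpha,\eta)$ the threshold from Proposition \ref{prop:EnEquiv}, and set $C:=(1+\eta)^2\max(C_1,C_2)+1$.

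The verification is then direct. Assume $\En_{\alpha,\eps}(\msp)>C$ or $\tEn_{\alpha,\eps}(\msp)>C$. Since $C>C_2$, Proposition \ref{prop:EnEquiv} applies and simultaneously pins the third factor in $(1-\eta,1+\eta)$ and shows $\En_{\alpha,\eps}(\msp)>C/(1+\eta)>C_1$. Corollary \ref{cor:EnHol} then applies to $\msp$, bounding the middle factor in $\bigl((1+\eta)^{-2},(1+\eta)^2\bigr)$ and yielding $\En_{\alpha,\eps}(f_*\msp)\geq\En_{\alpha,\eps}(\msp)/(1+\eta)^2>C_2$. This last lower bound permits Proposition \ref{prop:EnEquiv} to be applied to $f_*\msp$, controlling the first factor in $(1-\eta,1+\eta)$. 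Multiplying the three bounds and invoking the choice of $\eta$ yields the claimed two-sided estimate.

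The main obstacle is purely bookkeeping of constants: one must verify that $\eps_0$ can be chosen without knowledge of $R$ or $\alpha$, so that the quantifier order $\delta\to\eps_0\to R\to\alpha_0\to\alpha\to C$ is respected, and that the largeness of $C$ propagates through the pushforward so that Proposition \ref{prop:EnEquiv} still applies to $f_*\msp$. Both issues are resolved by reading Corollary \ref{cor:EnHol} and Proposition \ref{prop:EnEquiv} carefully; no new estimates are needed beyond those already established.
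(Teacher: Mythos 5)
Your proof is correct and is exactly the argument the paper intends: the corollary is stated there as following immediately from ``joining'' Proposition \ref{prop:EnEquiv} and Corollary \ref{cor:EnHol}, via precisely your three-factor decomposition together with the observation that $\bigl[2R(1+\log R)\bigr]^{\alpha}\to 1$ as $\alpha\to 0$, and your bookkeeping correctly respects the quantifier order (in particular, that the $\eps_0$ of Corollary \ref{cor:EnHol} depends only on $\delta$). The only nit is that your choice $C=(1+\eta)^2\max(C_1,C_2)+1$ is a hair too small to guarantee $\En_{\alpha,\eps}(f_*\msp)>C_2$ once the middle factor has cost you $(1+\eta)^2$ and the case $\tEn_{\alpha,\eps}(\msp)>C$ another factor close to $(1+\eta)$; taking, say, $C=(1+\eta)^4\max(C_1,C_2)+1$ fixes this without changing anything else.
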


To prove Proposition \ref{prop:EnEquiv} we will need the following:
\begin{lemma} \label{l:K-U-comp}
    For every $\alpha > 0$, the interaction potentials $K_{\alpha,\eps}(x,z)$ and \\$U_{\alpha,\eps}(d(x,z))$ are comparable in the  following sense:
    \begin{multline} \label{eq:K-U-close}
        \forall \delta>0 \, \exists C: \quad \forall \eps>0, \,\forall x,z\in M \\
        K_{\alpha,\eps}(x,z) \approx_{(\delta, C)} U_{\alpha,\eps}(d(x,z)), \ \text{i.e. }\\
        \frac{1}{1+\delta} (U_{\alpha,\eps}(d(x,z))-C) < K_{\alpha,\eps}(x,z) < (1+\delta) U_{\alpha,\eps}(d(x,z))+C.
    \end{multline}
\end{lemma}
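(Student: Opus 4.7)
The plan is to compare the Riemannian integral $K_{\alpha,\eps}(x,z)$ with the Euclidean integral $U_{\alpha,\eps}(d(x,z))$ by localizing to a normal coordinate chart at a point near $x$ and $z$, where the exponential map turns one integral into the other up to distortion that can be made arbitrarily close to $1$. Two elementary preliminary facts will be used throughout: (i) $\varphi_\alpha$ is monotonically decreasing on $(0, 1/e)$ with $\varphi_\alpha(\lambda r) = (1 + O(\eta))\varphi_\alpha(r)$ uniformly for $\lambda \in [1 - \eta, 1 + \eta]$ and $r \in (0, 1/e)$ (a direct computation using $|\log r| > 1$ to tame both the power of $\lambda$ and the $\log\lambda/\log r$ term); (ii) the radial integral $\int_M \varphi_{\alpha,\eps}(d(x,y)) \dd \Leb(y)$ is bounded uniformly in $\eps$, since $r^{k-1}\varphi_\alpha(r)$ is integrable near zero.

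I would first dispatch the case $d(x,z) \ge r_0$ for a threshold $r_0$ chosen below. Splitting $K_{\alpha,\eps}(x,z)$ according to whether $d(x,y) \le d(z,y)$, on each half one factor is bounded by the constant $\varphi_\alpha(r_0/2)$ and the other has bounded integral by (ii), so $K_{\alpha,\eps}(x,z)$ is bounded by a constant independent of $\eps$. The same argument on $\R^k$ bounds $U_{\alpha,\eps}(d(x,z))$, and the comparison holds trivially by absorbing both into $C$. For $d(x,z) < r_0$, fix $r_0$ small enough (depending on $\eta$) that every normal chart of radius $r_0$ has metric and Jacobian distortion within $1 \pm \eta$, and let $p$ be the midpoint of a minimizing geodesic from $x$ to $z$. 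Writing $\tilde x, \tilde z \in \R^k$ for their preimages under $\exp_p$, split $K_{\alpha,\eps}(x,z) = K_{\mathrm{near}} + K_{\mathrm{far}}$ by $y \in B_{r_0/2}(p)$ versus its complement. The far piece is bounded by a constant by the previous argument (on its domain both $d(x,y)$ and $d(z,y)$ exceed $r_0/4$).

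For $K_{\mathrm{near}}$, a change of variables via $\exp_p$ combined with (i)--(ii) converts the integral into
\begin{equation*}
    \int_{|\tilde y| < r_0/2} \varphi_{\alpha,\eps}(|\tilde x - \tilde y|)\,\varphi_{\alpha,\eps}(|\tilde z - \tilde y|) \dd \tilde y
\end{equation*}
up to a compound multiplicative factor $1 + O(\eta)$ assembled from three sources: the Jacobian, the metric distortion inside each $\varphi_{\alpha,\eps}$, and the scaling response of $\varphi_{\alpha,\eps}$ provided by (i). Extending the domain to all of $\R^k$ produces $U_{\alpha,\eps}(|\tilde x - \tilde z|)$ plus a uniformly bounded additive term (the complement integrand is again bounded with bounded integral). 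Finally $|\tilde x - \tilde z| = (1 \pm \eta)\,d(x,z)$, and Part \textbf{(IV)} of Proposition \ref{prop:UAEps} applied in both orderings to these two radii (shrinking $r_0$ further if needed to meet its hypothesis) gives $U_{\alpha,\eps}(|\tilde x - \tilde z|) = (1 + O(\eta))\,U_{\alpha,\eps}(d(x,z))$ modulo a bounded additive term. Choosing $\eta$ small enough that the compounded $1 + O(\eta)$ factors lie below $1 + \delta$ yields the desired $(\delta, C)$-closeness.

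The main obstacle is the bookkeeping of multiplicative errors across four independent sources of perturbation (Jacobian of $\exp_p$, metric distortion on $M$, scaling response of $\varphi_\alpha$, and the radius shift in $U_{\alpha,\eps}$) while verifying that all additive constants from far-field contributions are uniform in $\eps$; in particular, the scaling estimate in (i) is delicate near the transition $r = \eps$ and at the jump of $\varphi_\alpha$ at $r = 1/e$, so these edge regimes require separate but routine inspection to confirm that the associated errors remain additive and uniform in $\eps$.
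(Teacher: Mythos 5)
Your proof is correct and follows essentially the same route as the paper: split off a uniformly bounded far-field contribution, pass to geodesic coordinates with $(1+\eta)$-distortion of the metric and Jacobian, and use the scaling response of $\varphi_{\alpha,\eps}$ to convert the chart integral into the Euclidean convolution defining $U_{\alpha,\eps}$ up to a multiplicative $1+O(\eta)$ and a uniformly bounded additive term. The only substantive difference is that by centering the chart at the midpoint $p$ you must pay an extra comparison between $U_{\alpha,\eps}(|\tilde x-\tilde z|)$ and $U_{\alpha,\eps}(d(x,z))$ via Part \textbf{(IV)} of Proposition \ref{prop:UAEps} (valid, with the trivial $\eps\ge\eps_0$ regime absorbed into $C$), whereas the paper centers the chart at $x$ itself, where normal coordinates preserve radial distances so that $|x'-z'|=d(x,z)$ exactly and this step is not needed.
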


Let us postpone the proof of Lemma \ref{l:K-U-comp} for a moment and deduce Proposition~\ref{prop:EnEquiv}.

\begin{proof}[Proof of Proposition~\ref{prop:EnEquiv}]
    It suffices to integrate~\eqref{eq:K-U-close} w.r.t. $\msp\times \msp \, (x,z)$. As constant $C$ does not depend on these points nor on the measure $\msp$, one gets
    \begin{equation*}
        \frac{1}{1+\delta} (\En_{\alpha, \eps}(\msp)-C) < \tEn_{\alpha, \eps}(\msp) < (1+\delta) \En_{\alpha, \eps}(\msp)+C
    \end{equation*}
    with the same constant~$C$. As it was noticed in Remark~\ref{r:EnEqForm}, this is an equivalent form of Proposition~\ref{prop:EnEquiv}.
\end{proof}

\begin{proof}[Proof of Lemma~\ref{l:K-U-comp}]
    Let $\alpha > 0$ be given. Take any $r_0>0$ and divide the integral~\eqref{eq:K-a-e} defining $K_{\alpha,\eps}(x,z)$ into two parts, depending on whether the distance $d(x,y)$ exceeds $r_0$:
    \begin{multline*}
        K_{\alpha,\eps}(x,z) = \int_{B_{r_0}(x)} \varphi_{\alpha,\eps}(d(x,y)) \varphi_{\alpha,\eps}(d(z,y)) \, \dd \Leb(y) + \\
        + \int_{M\setminus B_{r_0}(x)} \varphi_{\alpha,\eps}(d(x,y)) \varphi_{\alpha,\eps}(d(z,y)) \, \dd \Leb(y).
    \end{multline*}
    Denote the first and the second summands as $K_{\alpha,\eps}^{(r_0)}(x,z)$ and  $\bK_{\alpha,\eps}^{(r_0)}(x,z)$. Note that the second summand is uniformly bounded: indeed, the factor $\varphi_{\alpha,\eps}(d(x,y))$ doesn't exceed a constant $\varphi_{\alpha, \eps}(r_0)\le \varphi_{\alpha}(r_0)$, while the second factor $\varphi_{\alpha,\eps}(d(y,z))$ is a function with the integral on~$M$ that is bounded uniformly in~$z\in M$. The latter uniform bound can be seen by again decomposing the integral in two:
    \begin{multline} \label{eq:phi-integral-bound}
        \int_M \varphi_{\alpha,\eps}(d(y,z)) \, \dd \Leb(y) = \int_{B_{r_0}(z)} \varphi_{\alpha,\eps}(d(y,z)) \, \dd \Leb(y) + \\
        + \int_{M\setminus B_{r_0}(z)} \varphi_{\alpha,\eps}(d(y,z)) \, \dd \Leb(y).
    \end{multline}
    The second summand in~\eqref{eq:phi-integral-bound} does not exceed $\vol(M)\cdot \varphi_{\alpha}(r_0)$.The first one can be estimated uniformly in~$z\in M$ by passage to the geodesic coordinates centred at~$z$, comparing it to the same integral in a ball in $\R^k$: the Jacobian of the change of variables is (for $r_0$ smaller than the injectivity radius in $M$) uniformly bounded, and the integral of $\frac{( - \log(r))^{\frac{\alpha - 1}{2}}}{r^{\frac{k}{2}}}$ on $B_{r_0}(0)\subset \R^k$ converges.

    We thus have
    \begin{equation} \label{eq:C-K-alpha}
        \bK_{\alpha,\eps}^{(r_0)}(x,z) \le C^K_{\alpha}(r_0)
    \end{equation}
    for some constant $C^K_{\alpha}(r_0)$.

    Now, let us transform the first integral, $K_{\alpha,\eps}^{(r_0)}(x,z)$. For sufficiently small $r_0>0$ (smaller than the injectivity radius at every point) one can take the geodesic coordinates at any point $x\in M$ in a ball of radius $r_0$. Thus, we can take a point $x'\in \R^k$, let $\psi:B_{r_0}(x')\to B_{r_0}(x)$ be geodesic coordinates, and denote $z':=\psi^{-1}(z)$.

    Fix any $\delta_1>0$. Due to the compactness of $M$, for a sufficiently small $r_0$ the Jacobian and the bi-Lipschitz constants of $\psi$ are $\delta_1$-close to~$1$:
    \begin{equation*}
        \fL(\psi) < 1+\delta_1, \quad \Jac(\psi) \in \left(\frac{1}{1+\delta_1}, 1+\delta_1\right).
    \end{equation*}
    Making a change of variables $y=\psi(y')$ in the integral for~$K_{\alpha,\eps}^{(r_0)}(x,z)$, we get
    \begin{multline*}
        K_{\alpha,\eps}^{(r_0)}(x,z) = \int_{B_{r_0}(x)} \varphi_{\alpha,\eps}(d(x,y)) \varphi_{\alpha,\eps}(d(z,y)) \, \dd \Leb_M(y) = \\
        = \int_{B_{r_0}(x')} \varphi_{\alpha,\eps}(d(\psi(x'),\psi(y'))) \varphi_{\alpha,\eps}(d(\psi(z'),\psi(y'))) \, \dd \Leb_M(\psi(y'));
    \end{multline*}
    all the three quotients
    \begin{equation*}
        \frac{\varphi_{\alpha,\eps}(d(\psi(x'),\psi(y')))}{\varphi_{\alpha,\eps}(|x'-y'|)},
        \quad
        \frac{\varphi_{\alpha,\eps}(d(\psi(z'),\psi(y')))}{\varphi_{\alpha,\eps}(|z'-y'|)},
        \quad
        \left. \frac{\dd\, \Leb_M}{\dd \, \psi_* \Leb_{\R^k}} \right|_{y}
    \end{equation*}
    are close to~$1$: the first two by a factor $(1+\delta_1)^{\frac{k + \alpha}{2}}$ since 
    \begin{equation*}
        1 \le \frac{\varphi_{\alpha, \eps} (r_1)}{\varphi_{\alpha, \eps} (r_2)} \le \left( \frac{\log(r_1)}{\log(r_2)} \right)^{\frac{\alpha - 1}{2}} \left( \frac{r_2}{r_1} \right)^{\frac{k}{2}} < \left( \frac{r_2}{r_1} \right)^{\frac{k + \alpha}{2}}  \quad \text{for any $0 < r_1 \le r_2 < \frac{1}{e}$,}
    \end{equation*}
    and the last one by the factor $(1+\delta_1)$. Thus, $K_{\alpha,\eps}^{(r_0)}(x,z)$ differs from
    \begin{equation} \label{eq:I-r_0}
        I_{\alpha,\eps}^{(r_0)} (x',z') :=\int_{B_{r_0}(x')} \varphi_{\alpha,\eps}(|x'-y'|) \varphi_{\alpha,\eps}(|z'-y'|) \, \dd \Leb_{\R^k}(y')
    \end{equation}
    by the factor at most $(1+\delta_1)^{k+\alpha+1}$. Now, $I_{\alpha,\eps}^{(r_0)} (x',z')$ is also a part of $I_{\alpha,\eps} (x',z')$ (define by (\ref{eq:I})), that differs from it by
    \begin{equation*}
        \overline{I}_{\alpha,\eps}^{(r_0)} (x',z') :=\int_{\R^k\setminus B_{r_0}(x')} \varphi_{\alpha,\eps}(|x'-y'|) \varphi_{\alpha,\eps}(|z'-y'|) \, \dd \Leb_{\R^k}(y'),
    \end{equation*}
    that is bounded uniformly in $\eps, x', z'$ (for $|x'-z'|<\frac{r_0}{2}$ both functions outside of $B_{r_0} (x')$ are bounded and have compact supports, for $|x'-z'|\ge \frac{r_0}{2}$  we have $\overline{I}_{\alpha,\eps}^{(r_0)} (x',z') \le U_{\alpha, \eps} \left( \frac{r_0}{2} \right)$). Thus, first choosing $\delta_1$ so that $(1+\delta_1)^{k + \alpha + 1}<1+\delta$ and then accordingly choosing $r_0$, we obtain the desired estimate~\eqref{eq:K-U-close}.

\end{proof}

\subsection{Properties of $\theta_{\alpha, \eps} (\msp)$ and $\tEn_{\alpha, \eps} (\msp)$ under H\"older homeomorphisms} \label{subsection:theta}

First, in this section we show that computing the measure $\theta_{\alpha, \eps}$ ``almost commutes'' with a H\"older continuous homeomorphisms, i.e. $\theta_{\alpha, \eps} (f_* \msp) \simeq f_* \theta_{\alpha, \eps} (\msp)$ (see Proposition \ref{prop:WassEst}). Second, we show that the energy $\tEn_{\alpha, \eps} (\msp)$ doesn't change much on average if we apply a random H\"older homeomorphism (see Proposition \ref{lm:tailsEst}). Both facts are used to prove the contraction described in part \ref{iii} of the outline of the proof.

In what follows it will be convenient to use Wasserstein metric in the space of probability measures on~$M$. Let us recall its definition, as well as definition of the total variation distance between measures:

\begin{definition}
    Let $\msp_1, \msp_2$ be two probability measures on a measure space $(M, \mathcal{B})$. Then the \emph{Wasserstein distance} between them is defined as
    \begin{equation*}
        W(\msp_1, \msp_2) = \inf_{\gamma} \iint_{M\times M} d(x, y) \, \dd \gamma(x, y),
    \end{equation*}
    where the infimum is taken over all probability measures $\gamma$ on $(M \times M, \mathcal{B} \times \mathcal{B})$ with the marginals (projections on the $x$ and $y$ coordinates) $P_x(\gamma) = \msp_1$ and $P_y(\gamma) = \msp_2$.
\end{definition}
\begin{definition}
    Let $\msp_1, \msp_2$ be two probability measures on a measure space $(M, \mathcal{B})$. Then the \emph{total variation} distance between them is
    \begin{equation*}
        \TV(\msp_1, \msp_2) = \sup_{B \in \mathcal{B}} |\msp_1(B) - \msp_2(B)|.
    \end{equation*}
\end{definition}

Also, we will need the following statement that can be found, for example, in \cite[Theorem 6.15]{Vi}: the Wasserstein metric is bounded from above by the diameter times the total variation distance.
\begin{lemma} \label{lm:WasEst}
    For every two probability measures $\msp_1, \msp_2 $ on a manifold $M$ we have
    \begin{equation*}
        W(\msp_1, \msp_2) \le \diam(M) \cdot \TV(\msp_1, \msp_2).
    \end{equation*}
\end{lemma}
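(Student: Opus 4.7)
The plan is to exhibit an explicit coupling $\gamma$ between $\msp_1$ and $\msp_2$ whose transport cost realizes the claimed bound. The guiding intuition is that any mass shared by $\msp_1$ and $\msp_2$ can be transported at zero cost (left on the diagonal), while the remaining mass, whose total size is controlled by the total variation distance, is transported at cost at most $\diam(M)$ per unit.

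First I would decompose both measures into a common part and a singular remainder. Setting $\mu := \msp_1 \wedge \msp_2$ for the measure-theoretic infimum, one has $\mu(M) = 1 - \TV(\msp_1,\msp_2)$ and the Jordan-type decompositions
\begin{equation*}
    \msp_1 = \mu + \nu_1, \qquad \msp_2 = \mu + \nu_2,
\end{equation*}
where $\nu_1,\nu_2$ are mutually singular positive measures with $\nu_1(M) = \nu_2(M) = \TV(\msp_1,\msp_2) =: t$. The case $t = 0$ gives $\msp_1 = \msp_2$, for which the inequality is trivial, so I will assume $t > 0$.

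Next I would define the candidate coupling
\begin{equation*}
    \gamma := (\mathrm{id}\times\mathrm{id})_* \mu \;+\; \tfrac{1}{t}\,\nu_1 \otimes \nu_2,
\end{equation*}
and check that its marginals are indeed $\msp_1$ and $\msp_2$. For the first marginal this is immediate: the diagonal part projects to $\mu$, and the product part projects to $\tfrac{1}{t}\nu_1 \cdot \nu_2(M) = \nu_1$; summing recovers $\msp_1$, and similarly for the second marginal. Thus $\gamma$ is an admissible transport plan.

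Finally I would estimate the cost. The diagonal contribution vanishes because $d(x,x) = 0$, while the off-diagonal term is bounded using $d(x,y) \le \diam(M)$:
\begin{equation*}
    \iint_{M\times M} d(x,y)\,\dd\gamma(x,y) \;\le\; \tfrac{1}{t}\,\diam(M)\,\nu_1(M)\,\nu_2(M) \;=\; \diam(M)\cdot t,
\end{equation*}
which is exactly $\diam(M)\cdot\TV(\msp_1,\msp_2)$. Taking the infimum over couplings on the left yields the desired inequality. There is no substantive obstacle here; the only minor care points are ensuring $\mu := \msp_1\wedge\msp_2$ is well-defined as a positive Borel measure (standard via the Lebesgue decomposition with respect to any common dominating measure such as $\msp_1 + \msp_2$) and treating the degenerate case $t=0$ separately.
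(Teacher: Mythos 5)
Your proof is correct, and it is the standard coupling argument behind the result: the paper itself offers no proof, deferring to \cite[Theorem 6.15]{Vi}, and your construction (keep the common part $\msp_1\wedge\msp_2$ on the diagonal, couple the mutually singular remainders by the normalized product measure) is exactly the argument used there. The marginal check and the cost bound $\tfrac{1}{t}\diam(M)\,\nu_1(M)\,\nu_2(M)=\diam(M)\cdot t$ are right under the paper's convention $\TV(\msp_1,\msp_2)=\sup_B|\msp_1(B)-\msp_2(B)|=\nu_1(M)$, so nothing is missing.
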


Equipped with these notions we would like to prove the following
\begin{proposition} \label{prop:WassEst}
    For every $\delta > 0$ and every $R < \infty$ there exists $\alpha_0 > 0$, such that for every $0 < \alpha < \alpha_0$ there exists $C > 0$, such that for every $\eps > 0$, every $f \in \Hol(M)$, such that $\gamma(f)^{-1}, L(f) < R$ and every $\msp \in \mM$ such that $\En_{\alpha, \eps} (\msp) > C$ or $\tEn_{\alpha, \eps} (\msp) > C$ one has
    \begin{equation} \label{tEnWassInv}
        W(f_* \theta_{\alpha, \eps} [\msp], \theta_{\alpha, \eps} [f_* \msp]) < \delta.
    \end{equation}
\end{proposition}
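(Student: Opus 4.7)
The plan is to use the mixture representations of both measures. Writing $\mu_1 := f_*\theta_{\alpha,\eps}[\msp]$ and $\mu_2 := \theta_{\alpha,\eps}[f_*\msp]$, and denoting by $\pi_{a,b}$ the probability measure on $M$ with density $\varphi_{\alpha,\eps}(d(a,\cdot))\varphi_{\alpha,\eps}(d(b,\cdot))/K_{\alpha,\eps}(a,b)$ with respect to Lebesgue, Lemma~\ref{l:triple} together with the definition of $\theta_{\alpha,\eps}$ gives
\begin{align*}
\mu_1 &= \frac{1}{\tEn_{\alpha,\eps}(\msp)} \iint_{M^2} K_{\alpha,\eps}(x,z)\, f_*\pi_{x,z}\, \dd\msp(x)\dd\msp(z), \\
\mu_2 &= \frac{1}{\tEn_{\alpha,\eps}(f_*\msp)} \iint_{M^2} K_{\alpha,\eps}(f(x),f(z))\, \pi_{f(x),f(z)}\, \dd\msp(x)\dd\msp(z).
\end{align*}
I insert an intermediate measure that uses the weights of $\mu_2$ but the atoms of $\mu_1$,
\begin{equation*}
\mu_1' := \frac{1}{\tEn_{\alpha,\eps}(f_*\msp)} \iint_{M^2} K_{\alpha,\eps}(f(x),f(z))\, f_*\pi_{x,z}\, \dd\msp(x)\dd\msp(z),
\end{equation*}
and apply the triangle inequality $W(\mu_1,\mu_2) \le W(\mu_1,\mu_1') + W(\mu_1',\mu_2)$, reducing the problem to (i) comparing weights with the same atoms and (ii) comparing atoms with the same weights.

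For (i), since $\mu_1$ and $\mu_1'$ are mixtures of the same family $\{f_*\pi_{x,z}\}$, a standard bound gives $\TV(\mu_1,\mu_1') \le \tfrac{1}{2}\iint\left|K_{\alpha,\eps}(x,z)/\tEn_{\alpha,\eps}(\msp) - K_{\alpha,\eps}(f(x),f(z))/\tEn_{\alpha,\eps}(f_*\msp)\right|\dd\msp^2$, and Lemma~\ref{lm:WasEst} converts this into
\begin{equation*}
W(\mu_1,\mu_1') \le \frac{\diam(M)}{2}\iint_{M^2}\left|\frac{K_{\alpha,\eps}(x,z)}{\tEn_{\alpha,\eps}(\msp)} - \frac{K_{\alpha,\eps}(f(x),f(z))}{\tEn_{\alpha,\eps}(f_*\msp)}\right|\dd\msp(x)\dd\msp(z).
\end{equation*}
To make the integrand small, Lemma~\ref{l:K-U-comp} replaces each $K$ by $U$ up to $(\delta,C_0)$-closeness; Proposition~\ref{prop:UAEps}(IV), combined with the bi-H\"older bounds $\gamma(f)^{-1},L(f)<R$, forces the ratio $U_{\alpha,\eps}(d(f(x),f(z)))/U_{\alpha,\eps}(d(x,z))$ into an interval shrinking to $\{1\}$ as $\alpha\to 0$; and Corollary~\ref{cor:tEnChangeOneStepHol} gives $\tEn_{\alpha,\eps}(f_*\msp)/\tEn_{\alpha,\eps}(\msp)\in(1-\delta,1+\delta)$. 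This controls the integrand pointwise on $\{d(x,z)\le r_0\}$. On the complement, both $K$-values are uniformly bounded by Lemma~\ref{l:K-U-comp}, so the weighted mass there is $O(1/\tEn_{\alpha,\eps})$, which the energy assumption (via Proposition~\ref{prop:EnEquiv}) makes arbitrarily small.

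For (ii), convexity of the Wasserstein distance under mixing yields
\begin{equation*}
W(\mu_1',\mu_2) \le \iint_{M^2} \frac{K_{\alpha,\eps}(f(x),f(z))}{\tEn_{\alpha,\eps}(f_*\msp)}\, W(f_*\pi_{x,z},\pi_{f(x),f(z)})\, \dd\msp(x)\dd\msp(z).
\end{equation*}
Routing the inner distance through the Dirac mass at $f(x)$ and applying $d(f(x),f(y))\le L(f)\,d(x,y)^{\gamma(f)/2}$ bounds it by $L(f)\,\E_{y\sim\pi_{x,z}}[d(x,y)^{\gamma(f)/2}] + \E_{y\sim\pi_{f(x),f(z)}}[d(f(x),y)]$. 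For $(x,z)$ in the near region $\{d(x,z)\le\bar r\}$, both expectations will be shown to be $O((-\log\eps)^{-\alpha})$: split the domain into $\{d(x,y)\le d(z,y)\}$ and its complement; on each piece, monotonicity of $\varphi_{\alpha,\eps}$ dominates the product $\varphi_{\alpha,\eps}(d(x,y))\varphi_{\alpha,\eps}(d(z,y))$ by $\varphi_{\alpha,\eps}^2$ evaluated at the closer point; the substitution $u=-\log r$ reduces the resulting radial integral to $\int_1^\infty u^{\alpha-1}e^{-\gamma u/2}\dd u < \infty$; and the ratio is controlled by comparison with $K_{\alpha,\eps}(x,x)\sim c_{\alpha,k}(-\log\eps)^\alpha$. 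The far region $\{d(x,z)>\bar r\}$ is handled as in (i): the weighted mass there is $O(1/\tEn_{\alpha,\eps})$, while the inner Wasserstein distance is at most $\diam(M)$. The crucial observation is that $\En_{\alpha,\eps}(\msp)>C$ with $C$ large forces $\eps$ to be small, by Lemma~\ref{lm:EnUpperBound}, supplying the needed smallness of $(-\log\eps)^{-\alpha}$.

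The main obstacle will be this uniform estimate on $\E_{\pi_{x,z}}[d(x,y)^{\gamma(f)/2}]$ in the near region: since $\pi_{x,z}$ is lens-shaped between $x$ and $z$ rather than rotationally symmetric about a single point, the case split above is unavoidable, and one must arrange the bound so that it does not blow up in either $d(x,z)\to 0$ or $d(x,z)\to\bar r$. Once this is in hand, both summands are $<\delta/2$ for $\alpha$ sufficiently small and $C$ sufficiently large, completing the proof.
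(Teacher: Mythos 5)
Your decomposition is, at bottom, the paper's own: the paper introduces the projection $\mTfn_{\alpha,\eps}(\msp)$ of the three-fold measure $\dm_{\alpha,\eps}(\msp)$ onto the $x$-coordinate, which in your language is precisely the mixture with components $\delta_x$ and weights $K_{\alpha,\eps}(x,z)/\tEn_{\alpha,\eps}(\msp)$; its Lemma~\ref{l:TV-prime} is your step (i) (weights compared in total variation via Lemma~\ref{l:K-U-comp}, Proposition~\ref{prop:UAEps}\textbf{(IV)} and Corollary~\ref{cor:tEnChangeOneStepHol}), and its Lemma~\ref{l:W-close} is your step (ii) (each component is concentrated near its center, with the far part of the lens costing only $O(1/\tEn_{\alpha,\eps})$). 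The one thing your single intermediate measure $\mu_1'$ buys is that you never have to control how $f_*$ distorts a Wasserstein distance (the paper needs the Jensen-type bound $W(f_*\mTn, f_*\mTfn)\le R\, W(\mTn,\mTfn)^{2/R}$), because the push-forward is already built into the components $f_*\pi_{x,z}$.

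The genuine problem is the estimate you yourself flag as the main obstacle. The claim $\E_{y\sim\pi_{x,z}}[d(x,y)^{\gamma(f)/2}]=O((-\log\eps)^{-\alpha})$ is false, and the proposed source of smallness (high energy forces $\eps$ small via Lemma~\ref{lm:EnUpperBound}) is the wrong one. The normalization satisfies $K_{\alpha,\eps}(x,z)\approx c_{\alpha,k}\bigl(-\log\max(d(x,z),\eps)\bigr)^{\alpha}$, not $K_{\alpha,\eps}(x,x)\approx c_{\alpha,k}(-\log\eps)^{\alpha}$; for $d(x,z)$ of constant order and fixed $\alpha$ the expectation $\E_{y\sim\pi_{x,z}}[d(x,y)^{\gamma/2}]$ converges, as $\eps\to 0$, to a fixed positive number rather than to $0$. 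The correct mechanism is the factor $c_{\alpha,k}=k\omega_k/\alpha$: on a near region $\{d(x,z)\le \bar r\}$ with $\bar r<r_0$ fixed, Proposition~\ref{prop:UAEps}\textbf{(III)} and Lemma~\ref{l:K-U-comp} give $K_{\alpha,\eps}(x,z)\gtrsim 1/\alpha$, while the numerator $\int_M d(x,y)^{\gamma/2}\varphi_{\alpha,\eps}(d(x,y))\varphi_{\alpha,\eps}(d(z,y))\,\dd\Leb(y)$ is bounded by a constant depending only on $k$, $M$ and $R$ (split at $d(x,y)=2d(x,z)$; on the outer part $\varphi_{\alpha,\eps}(d(z,y))\le 2^{(k+\alpha)/2}\varphi_{\alpha,\eps}(d(x,y))$ and $\int_1^{\infty}u^{\alpha-1}e^{-\gamma u/2}\,\dd u\le 2R$ for $\alpha<1$; the inner part carries mass only $O((-\log d(x,z))^{\alpha-1})$). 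Hence the expectation is $O(R\alpha)$, and the needed smallness comes from choosing $\alpha_0$ small --- which the quantifier order of the Proposition permits --- together with $\bar r$ small for the second expectation, not from $\eps$. With this correction your argument closes; as written, the key inequality would fail.
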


\begin{proof} 
    For given $\alpha,\eps>0$ and measure $\msp$, consider a (non-probability) measure $\dm_{\alpha,\eps}(\msp)$ on $M\times M \times M$, given by
    \begin{equation*}
        \dm_{\alpha,\eps}(\msp)= \varphi_{\alpha,\eps}(d(x,y)) \varphi_{\alpha,\eps}(d(z,y)) \, \dd \msp(x) \, \dd \Leb(y) \, \dd \msp(z).
    \end{equation*}

    Denote by $\pi_1$, $\pi_2$ the projections of $M\times M\times M$ on the first and second coordinates respectively, and by $\pi_{1,3}$ the projection on $M\times M$ corresponding to the first and third coordinates. Then directly from definition
    \begin{equation*}
        (\pi_{2})_* \, \dm_{\alpha,\eps}(\msp) = \mT_{\alpha,\eps}(\msp);
    \end{equation*}
    also, define
    \begin{equation} \label{eq:pi-1-3}
        \mTt_{\alpha,\eps}(\msp):= (\pi_{1,3})_* \, \dm_{\alpha,\eps}(\msp) = K_{\alpha,\eps}(x,z) \, \msp(dx) \, \msp(dz);
    \end{equation}
    the second equality is due to~\eqref{eq:K-a-e}.
    Finally, consider the measure
    \begin{equation}
        \mTf_{\alpha,\eps}(\msp):=(\pi_1)_* \, \dm_{\alpha,\eps}(\msp)
    \end{equation}
    as well as the normalizations of these measures,
    \begin{equation}
        \mTtn_{\alpha,\eps}(\msp):=\frac{1}{\tEn_{\alpha,\eps}(\msp)}  \mTt_{\alpha,\eps}(\msp), \quad
        \mTfn_{\alpha,\eps}(\msp):=\frac{1}{\tEn_{\alpha,\eps}(\msp)} \mTf_{\alpha,\eps}(\msp).
    \end{equation}

    For a high-energy measure $\msp$ most of the measure $\dm_{\alpha,\eps}(\msp)$ is concentrated near the diagonal, and hence the projections on the first and on the second coordinates are close to each other. The following lemma formalizes this argument:

    \begin{lemma} \label{l:W-close}
        For every $\delta_1$, $\alpha$ there exists $C'$ such that for every $\eps>0$ and $\msp$ with $\tEn_{\alpha,\eps}(\msp)>C'$ one has
        \begin{equation*}
            W(\mTn_{\alpha,\eps}(\msp),  \mTfn_{\alpha,\eps}(\msp))<\delta_1.
        \end{equation*}
    \end{lemma}

    \begin{proof}
        Take $r_0:=\frac{\delta_1}{2}$ and let
        \begin{equation*}
            A_{r_0}:=\{(x,y,z)\in M^3 \mid d(x,y)< r_0 \}, \quad \bA_{r_0}:=M^3 \setminus A_{r_0}.
        \end{equation*}
    
        From the proof of Lemma~\ref{l:K-U-comp}, we have
        \begin{equation*}
            \dm_{\alpha,\eps}(\msp)(\bA_{r_0}) = \int \bK_{\alpha,\eps}^{(r_0)}(x,z) \, d\msp(x) \, d\msp(z) \le C^K_{\alpha}(r_0),
        \end{equation*}
        where the second inequality is due to~\eqref{eq:C-K-alpha}. Thus, the non-normalized measure $\dm_{\alpha,\eps}(\msp)(\bA_{r_0})$ does not exceed a constant~$C^K_{\alpha}(r_0)$.

        Now, we can couple the normalized measures $\mTn_{\alpha,\eps}(\msp)$ and $\mTfn_{\alpha,\eps}(\msp)$ using the projection of $\frac{1}{\tEn_{\alpha,\eps}(\msp)} \dm_{\alpha,\eps}(\msp)$ on the first two coordinates; this coupling leads to the upper bound for the Wasserstein distance:
        \begin{multline}\label{eq:W-delta-1}
            W(\mTn_{\alpha,\eps}(\msp), \mTfn_{\alpha,\eps}(\msp))\le r_0 \cdot \frac{\dm_{\alpha,\eps}(\msp)(A_{r_0})}{\tEn_{\alpha,\eps}(\msp)} \\
            + \diam(M) \cdot \frac{\dm_{\alpha,\eps}(\msp)(\bA_{r_0})}{\tEn_{\alpha,\eps}(\msp)}    \le r_0+ \frac{C^K_{\alpha}(r_0)}{\tEn_{\alpha,\eps}(\msp)},
        \end{multline}
        where the first summand corresponds to the points with $d(x,y)<r_0$, and the second to the points with $d(x,y)\ge r_0$. As we chose $r_0=\frac{\delta_1}{2}$, it suffices to require that
        \begin{equation*}
            \tEn_{\alpha,\eps}(\msp) > \frac{2 \, C^K_{\alpha} (r_0)}{\delta_1} =: C'
        \end{equation*}
        to ensure that the total Wasserstein distance does not exceed $\delta_1$.
    \end{proof}

    On the other hand, for the measures $\mTfn_{\alpha,\eps}(\msp)$ the analogue of Proposition~\ref{prop:WassEst} can be established directly, and one can even estimate the total variations distance:
    \begin{lemma}\label{l:TV-prime}
        For every $\delta_2 > 0$ and every $R > 0$ there exists $\alpha_2 > 0$ such that for every $0 < \alpha < \alpha_2$ there exists $C > 0$ such that for every $\eps > 0$, every $f \in \Hol(M)$ with $\gamma(f)^{-1}, L(f) < R$ and every $\msp$ such that $\tEn_{\alpha, \eps} (\msp) > C$ or $\En_{\alpha, \eps} (\msp) > C$  one has
        \begin{equation}\label{eq:TV-theta-prime}
            \TV(f_* \mTfn_{\alpha,\eps}(\msp),\mTfn_{\alpha,\eps}(f_* \msp)) < \delta_2,
        \end{equation}
        and, hence,
        \begin{equation}\label{eq:W-TV}
            W(f_* \mTfn_{\alpha,\eps}(\msp),\mTfn_{\alpha,\eps}(f_* \msp)) < \delta_2 \cdot \diam(M).
        \end{equation}
    \end{lemma}

    \begin{proof}

        We will first use Lemma~\ref{l:K-U-comp} together with Part \textbf{(IV)} of  Proposition~\ref{prop:UAEps}
        to compare $K_{\alpha,\eps}(x,z)$ with $K_{\alpha,\eps}(f(x),f(z))$. Fix $\alpha_0$ and $\delta_3$ so small that
        \begin{equation*}
            (1+\delta_3)^2 \cdot \left[ 2 R (1 + \log(R)) \right]^{\alpha_0} < 1+\frac{\delta_2}{10}.
        \end{equation*}
        Then from Lemma~\ref{l:K-U-comp} we know that there exists $C>0$ such that for any $x,z,\eps$ we have
        \begin{equation} \label{KUequiv}
            K_{\alpha,\eps}(x,z) \approx_{(\delta_3,C)} U_{\alpha,\eps}(x,z), \quad  K_{\alpha,\eps}(f(x),f(z)) \approx_{(\delta_3,C)} U_{\alpha,\eps}(f(x),f(z)).
        \end{equation}
        Without loss of generality we can assume that $d(x, z) < d(f(x), f(z))$. Then by Part \textbf{(IV)} of Proposition \ref{prop:UAEps} we can choose a particular $r_0 = r_0(\alpha_0, \delta_3)$, such that if $d(f(x), f(z)) < r_0$ then  
        \begin{equation} \label{ineq:delta3}
            \frac{U_{\alpha, \eps} (d(x, z))}{U_{\alpha, \eps} (d(f(x), f(z)))} < (1 + \delta_3) \left( \frac{\log(d(x, z))}{\log(d(f(x), f(z)))} \right)^{\alpha}.
        \end{equation}
        Notice that inequality (\ref{ineq:delta3}) only holds for $\eps < \eps_0(\alpha_0, \delta_3)$. That can be guaranteed by taking $C$ big enough, because, according to Lemma \ref{lm:EnUpperBound}, the energy $\En_{\eps, \alpha} (\msp)$ for $\eps \ge \eps_0$ has an explicit upper bound.
        
        Using the fact that $L(f), \gamma(f)^{-1} < R$ we conclude that
        \begin{multline*}
            \left( \frac{\log(d(x, z))}{\log(d(f(x), f(z)))} \right)^{\alpha} \le \\
            \le \left( \frac{\frac{2}{\gamma(f)} \log \left[ d(f(x), f(z)) L(f)^{-1} \right]}{\log(d(f(x), f(z)))} \right)^{\alpha} < \\
            < \left[ 2 R (1 + \log(R)) \right]^{\alpha_0}
        \end{multline*}
        and
        \begin{equation} \label{delta2Eq}
            U_{\alpha, \eps} (d(x, z)) < (1 + \delta_3) \left[ 2 R (1 + \log(R)) \right]^{\alpha_0} U_{\alpha, \eps} (d(f(x), f(z))).
        \end{equation}
        On the other hand, if $d(f(x), f(z)) > r_0$ then
        \begin{equation*}
            d(x, z) \ge \left( \frac{r_0}{L(f)} \right)^{\frac{2}{\gamma(f)}} > \left( \frac{r_0}{R} \right)^{\frac{2}{R}}
        \end{equation*}
        and hence 
        \begin{equation} \label{C''Eq}
            U_{\alpha, \eps} (d(x, z)) < U_{\alpha_0} \left( \left( \frac{r_0}{R} \right)^{\frac{2}{R}} \right).
        \end{equation}
        Combining estimates (\ref{KUequiv}), (\ref{delta2Eq}) and (\ref{C''Eq}) we conclude that
        \begin{equation} \label{eq:K-delta-C}
            K_{\alpha,\eps}(x,z) \approx_{(\frac{\delta_2}{10},C'')}K_{\alpha,\eps}(f(x),f(z))
        \end{equation}
        for some explicit constant~$C''$.

        Now, applying $f^{-1}_*$ does not change the total variations distance, so instead of~\eqref{eq:TV-theta-prime} we can show the equivalent statement
        \begin{equation}\label{eq:d-2}
            \TV(\mTfn_{\alpha,\eps}(\msp),f^{-1}_* \mTfn_{\alpha,\eps}(f_* \msp)) < \delta_2.
        \end{equation}
        The measures here can be obtained as projections of measures on $M\times M$:
        \[
            \mTfn_{\alpha,\eps}(\msp) =  (\pi_1)_* \, \mTtn_{\alpha,\eps}(\msp),
        \]
        \[
            f^{-1}_* \mTfn_{\alpha,\eps}(f_* \msp) =
            (\pi_1)_* \left(f^{-1}_* \mTtn_{\alpha,\eps}(f_* \msp)\right)
        \]
        where $\pi_1$ is the projection on the first coordinate. To obtain the desired~\eqref{eq:d-2}, we will actually show that for sufficiently high energy~$\tEn_{\alpha,\eps}(\msp)$ even before projection one has
        \begin{equation}\label{eq:TV-delta-2}
            \TV(\mTtn_{\alpha,\eps}(\msp),f^{-1}_* \mTtn_{\alpha,\eps}(f_* \msp))<\delta_2.
        \end{equation}

        To do so, note that both these measures are absolutely continuous with respect to $\nu\times \nu$:
        \[
            \mTtn_{\alpha,\eps}(\msp) = \frac{1}{\tEn_{\alpha,\eps}(\msp)}  K_{\alpha,\eps}(x,z) \, \nu(dx) \, \nu (dz),
        \]
        \[
            f^{-1}_* \mTtn_{\alpha,\eps}(f_* \msp) =  \frac{1}{\tEn_{\alpha,\eps}(f_* \msp)} K_{\alpha,\eps}(f(x),f(z)) \, \nu(dx) \, \nu (dz).
        \]
        Now, \eqref{eq:K-delta-C} implies that once $K_{\alpha,\eps}(x,z)>C_2 := \frac{20C''}{\delta_2}$, the $K_{\alpha,\eps}$ parts of these densities are close to each other:
        \[
            \frac{1}{1+\frac{\delta_2}{6}}< \frac{K_{\alpha,\eps}(f(x),f(z))}{K_{\alpha,\eps}(x,z)} < 1+\frac{\delta_2}{6}.
        \]
        On the other hand, due to Corollary~\ref{cor:tEnChangeOneStepHol}, once the energy $\tEn_{\alpha,\eps}(\msp)$ is sufficiently large, the normalization constants are also close to each other:
        \[
            \frac{1}{1+\frac{\delta_2}{6}} \, \tEn_{\alpha,\eps}(\msp)<\tEn_{\alpha,\eps}(f_*\msp) <
            (1+\frac{\delta_2}{6}) \, \tEn_{\alpha,\eps}(\msp).
        \]
        Multiplying the two inequalities, we get that on the set $$A:=\{(x,z)\mid K_{\alpha,\eps}(x,z)>C_2\}$$ the quotient of densities w.r.t. $\msp\times \msp$ of the two measures is in the interval 
        \[
            \left( \frac{1}{ \left( 1+\frac{\delta_2}{6} \right)^2}, \left( 1+\frac{\delta_2}{6} \right)^2\right).
        \]  

        Finally, the $\mTtn_{\alpha,\eps}(\msp)$-measure of its complement does not exceed $\frac{C_2}{\tEn_{\alpha,\eps}(\msp)}$. Hence, if the energy $\tEn_{\alpha,\eps}(\msp)$ is sufficiently high to make sure that $\frac{C_2}{\tEn_{\alpha,\eps}(\msp)}<\frac{\delta_2}{6}$, the part that the normalized measures have in common is at least
        \[
            \frac{1-\frac{\delta_2}{6}}{\left( 1 + \frac{\delta_2}{6} \right)^2} > 1-\frac{\delta_2}{2},
        \]
        and hence the total variation~\eqref{eq:TV-delta-2} indeed does not exceed~$\delta_2$.

        An application of Lemma~\ref{lm:WasEst} concludes the proof of the upper bound~\eqref{eq:W-TV} for the Wasserstein distance.

    \end{proof}

    Lemma \ref{l:W-close}  and   Lemma \ref{l:TV-prime}  
    together imply Proposition~\ref{prop:WassEst}. Indeed, we have:
    \begin{multline*}
        W(f_* \mTn_{\alpha,\eps}(\msp), \mTn_{\alpha,\eps}(f_* \msp) )
        \le
        W(f_* \mTn_{\alpha,\eps}(\msp), f_* \mTfn_{\alpha,\eps}(\msp))+
        \\
        + W(f_* \mTfn_{\alpha,\eps}(\msp),\mTfn_{\alpha,\eps}(f_* \msp))
        + W(\mTfn_{\alpha,\eps}(f_* \msp), \mTn_{\alpha,\eps}(f_* \msp))
    \end{multline*}
    The first and the third summands can be estimated directly using Lemma~\ref{l:W-close}. It suffices to note that since $L(f), \gamma(f)^{-1} < R$ we have control over the increase of the Wasserstein distance after application of~$f$. Indeed, a simple application of Jensen's inequality gives us 
    \begin{equation*}
        W(f_* \mTn_{\alpha,\eps}(\msp), f_* \mTfn_{\alpha,\eps}(\msp)) < R \cdot W(\mTn_{\alpha,\eps}(\msp), \mTfn_{\alpha,\eps}(\msp))^{\frac{2}{R}}.
    \end{equation*}
    For the last summand, to apply Lemma, we note that an upper bound on $L(f)$ and $\gamma(f)^{-1}$ implies that $f_*\msp$ is of sufficiently high energy provided that $\msp$ is of high energy.

    Finally, the second summand is estimated directly by Lemma~\ref{l:TV-prime}.
\end{proof}

\begin{proposition} \label{lm:tailsEst}
    If a probability measure $\mgr$ on $\Hol(M)$ satisfies assumption (\ref{RegCondHol}) then for every $\delta > 0$ there exists $\alpha_0 > 0$ such that for every $0 < \alpha < \alpha_0$ there exists $C > 0$ such that for every $\eps > 0$ and every measure $\msp \in \mM$ such that $\En_{\alpha, \eps} (\msp) > C$ the following formula holds:
    \begin{equation} \label{tailsEst1}
        \frac{\E_{\mgr} \left[ \tEn_{\alpha, \eps} (f_* \msp) \right]}{\tEn_{\alpha, \eps} (\msp)} \in (1 - \delta, 1 + \delta).
    \end{equation}
\end{proposition}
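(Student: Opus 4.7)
The plan is to reduce Proposition \ref{lm:tailsEst} to Proposition \ref{prop:EnChangeHol} by applying it in both directions, converting $\En_{\alpha,\eps}$ to $\tEn_{\alpha,\eps}$ via Proposition \ref{prop:EnEquiv}, and then using dominated convergence to push the multiplicative factor to $1$ by taking $\alpha\to 0$. Let me write $g(f) := \left(\frac{2(1+\log L(f))}{\gamma(f)}\right)^{\alpha}$. A key observation is that $\gamma(f)=\gamma(f^{-1})$ and $L(f)=L(f^{-1})$ (directly from the symmetric definitions, substituting $u=f^{-1}(x)$, $v=f^{-1}(y)$), so $g(f)=g(f^{-1})$.

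First I would establish the asymptotics of $\mathbb{E}_\mu[g(f)]$ and $\mathbb{E}_\mu[1/g(f)]$. Since $g(f)\ge 1$ pointwise ($2(1+\log L(f))\ge 2 > 1 \ge \gamma(f)$ typically, and more generally the base is $\ge 1$ by simple estimates), and $g(f)\to 1$ pointwise as $\alpha\to 0$, I use the elementary bound $x^\alpha \le 1+x^\beta$ for $0\le\alpha\le\beta$, $x\ge 0$, which gives $g(f) \le 1 + C_\beta\left(\frac{1+\log L(f)}{\gamma(f)}\right)^{\beta}$. This is $\mu$-integrable by hypothesis~\eqref{RegCondHol}, so by dominated convergence $\mathbb{E}_\mu[g(f)]\to 1$ as $\alpha\to 0$; similarly $1/g(f)\le 1$ and $1/g(f)\to 1$ pointwise, so $\mathbb{E}_\mu[1/g(f)]\to 1$. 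Fixing $\delta>0$, I choose $\alpha_0$ small enough that both $\mathbb{E}_\mu[g(f)]$ and $1/\mathbb{E}_\mu[1/g(f)]$ lie in $(1-\delta,1+\delta)$ for $\alpha<\alpha_0$.

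Next, I use Proposition \ref{prop:EnChangeHol} and Remark \ref{r:EnEqForm}. Combining Remark \ref{r:EnEqForm} (which holds unconditionally in the $\approx_{(\delta,C)}$ form) with Proposition \ref{prop:EnChangeHol} gives, for some constants $A,C',C''$:
\begin{equation*}
    \tEn_{\alpha,\eps}(f_*\msp) \le (1+\delta)^3 g(f)\tEn_{\alpha,\eps}(\msp) + g(f)C' + C''.
\end{equation*}
Applying the same proposition to $f^{-1}$ and the measure $f_*\msp$, and using $g(f^{-1})=g(f)$, yields the matching lower bound:
\begin{equation*}
    \tEn_{\alpha,\eps}(f_*\msp) \ge \frac{1}{(1+\delta)^3\, g(f)}\tEn_{\alpha,\eps}(\msp) - \frac{C'}{g(f)} - C''.
\end{equation*}
Integrating against $\mu$ and dividing by $\tEn_{\alpha,\eps}(\msp)$:
\begin{equation*}
    \frac{\mathbb{E}_\mu[\tEn_{\alpha,\eps}(f_*\msp)]}{\tEn_{\alpha,\eps}(\msp)} \le (1+\delta)^3\mathbb{E}_\mu[g(f)] + \frac{C'\mathbb{E}_\mu[g(f)]+C''}{\tEn_{\alpha,\eps}(\msp)},
\end{equation*}
and analogously from below using $\mathbb{E}_\mu[1/g(f)]\ge 1/\mathbb{E}_\mu[g(f)]$ (Jensen) or directly via the lower inequality.

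Finally, I choose $C$ in Proposition \ref{lm:tailsEst} large enough (depending on $A,C',C''$, the finite quantity $\mathbb{E}_\mu[g(f)]$, and $\delta$) so that whenever $\En_{\alpha,\eps}(\msp)>C$ (hence also $\tEn_{\alpha,\eps}(\msp)$ is large by Remark \ref{r:EnEqForm}), the additive corrections are absorbed into a further $\delta$-perturbation; combined with the choice $\alpha<\alpha_0$ this yields both $\le 1+\delta$ and $\ge 1-\delta$ for the ratio, as required. The main subtlety will be juggling the three separate small parameters — $\delta$, the additive constants from Proposition \ref{prop:EnChangeHol} and Proposition \ref{prop:EnEquiv}, and the smallness of $\alpha$ — and making sure that constants introduced in the lower-bound step (which involves dividing by $g(f)$ that can be unbounded) are controlled; the symmetry $g(f)=g(f^{-1})$ is crucial here since without it the lower bound would require an integrability assumption on $g(f^{-1})$ that is not separately given.
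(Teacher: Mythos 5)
Your proposal is correct and follows essentially the same route as the paper: the paper likewise first shrinks $\alpha$ so that $\E_{\mgr}\bigl[\bigl(2(1+\log L(f))/\gamma(f)\bigr)^{\alpha}\bigr]$ is close to $1$ (via H\"older's inequality rather than your dominated-convergence bound, an immaterial difference), then integrates the two-sided one-step estimate of Corollary \ref{cor:EnHol} (which is exactly your ``apply Proposition \ref{prop:EnChangeHol} to $f$ and to $f^{-1}$'' step, using the same symmetry $L(f)=L(f^{-1})$, $\gamma(f)=\gamma(f^{-1})$), and finally transfers from $\En_{\alpha,\eps}$ to $\tEn_{\alpha,\eps}$ with Proposition \ref{prop:EnEquiv}, absorbing the additive constants by taking the energy threshold $C$ large. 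The only difference is the order in which the $\En\leftrightarrow\tEn$ conversion and the integration over $\mgr$ are performed, which does not affect the argument.
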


\begin{proof}

    Let us fix some $\dn > 0$ and pick $\tdelta > 0$, such that $(1 + \tdelta)^2 < 1 + \dn$. It follows directly from condition (\ref{RegCondHol}) and H\"older inequality that there exists $\alpha_0 > 0$, such that for every $0 < \alpha < \alpha_0$ one has
    \begin{equation} \label{IntEst}
        \int_{\Hol(M)} \Lambda(f)^{\alpha} \; \dd \mgr(f) < 1 + \tdelta.
    \end{equation}
    According to Corollary \ref{cor:EnHol} we can choose $C$ such that inequality (\ref{EnHolOneStep}) holds for $\tdelta$. Let $f \in \Hol(M)$ be a random homeomorphism distributed with respect to the measure $\mgr$. Then integrating inequality (\ref{EnHolOneStep}) with respect to the measure $\mgr$ and combining it with inequality (\ref{IntEst}) we get:
    \begin{equation} \label{eq:E-f-power}
        \frac{1}{1 + \dn} < \frac{1}{(1 + \tdelta)^2} \le \frac{\E_{\mgr} \left[ \En_{\alpha, \eps} (f_* \msp) \right]}{\En_{\alpha, \eps} (\msp)} \le (1 + \tdelta)^2 < 1 + \dn.
    \end{equation}

    Let us now return back to the original~\eqref{tailsEst1}. 
    Namely, let $\delta > 0$ be given; choose and fix $\dn > 0$ such that $(1 + \dn)^3 < 1 + \delta$, and let $\alpha_0$ be chosen w.r.t. $\dn$ so that~\eqref{eq:E-f-power} holds.

    By Proposition~\ref{prop:EnEquiv},  
    there exists $\Cn>0$ such that for any measure $\msp'$ on $M$ and any $\eps>0$ one has
    \begin{equation}\label{eq:nu-arb}
        \frac{1}{1+\dn} \, \En_{\alpha,\eps}(\msp') - \frac{\Cn}{1+\dn}  <
        \tEn_{\alpha,\eps}(\msp')  < (1+\dn) \En_{\alpha,\eps}(\msp') + \Cn
    \end{equation}
    Applying this for $\msp'=f_*\msp$ and taking the expectation w.r.t. $\mgr$ provides
    \begin{multline*}
        \frac{1}{1+\dn} \, \E_{\mu}\left [\En_{\alpha,\eps}(f_*\msp) \right] - \frac{\Cn}{1+\dn}  <
        \E_{\mu}\left [\tEn_{\alpha,\eps}(f_*\msp) \right]
        \\
        < (1+\dn) \E_{\mu}\left [\En_{\alpha,\eps}(f_*\msp) \right] + \Cn;
    \end{multline*}
    using~\eqref{eq:E-f-power}, we thus get
    \begin{equation*}
        \frac{1}{(1+\dn)^2} \, \En_{\alpha,\eps}(\msp) - \frac{\Cn}{1+\dn}  <
        \E_{\mu}\left [\tEn_{\alpha,\eps}(f_*\msp) \right]
        \\
        < (1+\dn)^2 \En_{\alpha,\eps}(\msp)  + \Cn.
    \end{equation*}
    Finally, using~\eqref{eq:nu-arb} with $\msp'=\msp$ to estimate $\En_{\alpha,\eps}$ via $\tEn_{\alpha,\eps}$, we get
    \begin{multline*}
        \frac{1}{(1+\dn)^2} \cdot \frac{1}{(1+\dn)}\left(\tEn_{\alpha,\eps}(\msp) - \Cn\right) - \frac{\Cn}{1+\dn}  <
        \E_{\mu}\left [\tEn_{\alpha,\eps}(f_*\msp) \right]
        \\
        < (1+\dn)^2 \cdot \left ((1+\dn) \tEn_{\alpha,\eps}(\msp) + \Cn\right)  + \Cn;
    \end{multline*}
    as $(1+\dn)^3<1+\delta$, we obtain the desired
    \[
        (1+\delta) (\tEn_{\alpha,\eps}(\msp) - C') < \E_{\mu}\left [\tEn_{\alpha,\eps}(f_*\msp) \right] <
        (1+\delta) \tEn_{\alpha,\eps}(\msp) + C'
    \]
    for some constant~$C'$. As $\delta>0$ was arbitrary \eqref{tailsEst1} follows. 

\end{proof}

\begin{remark}
    Notice that if instead of one measure $\mgr$ on $\Hol(M)$ we consider a compact $K$ of such measures, such that there exists $\beta > 0$ and $C_0$ such that for every $\mgr \in K$
        \begin{equation*}
            \int_{\Hol(M)} \Lambda(f)^{\beta} \; \dd \mgr(f) < C_0
        \end{equation*}
    then constants $\alpha_0$ and $C$ in Proposition \ref{lm:tailsEst} can be chosen uniformly for all measures in $K$.
\end{remark}

\subsection{Contraction of $\tEn_{\alpha, \eps}$ under random dynamics} \label{subsection:contr}

In this section we prove a proposition that formalizes \eqref{contrInt}. Namely, we show that if $\En_{\alpha, \eps} (\msp)$ is big enough then $\tEn_{\alpha, \eps} (\mgr * \msp) < \lambda \tEn_{\alpha, \eps} (\msp)$ for some $\lambda < 1$. This will allow us to control the energy after $n$ iterations of random dynamics and finish the proof of Theorem \ref{thm:mainHol-3}.

\begin{proposition} \label{prop:contr}
    Under the assumptions of Theorem \ref{thm:mainHol-3} there exists $\alpha > 0$, $\lambda < 1$, and $C < \infty$, such that for every $\eps > 0$, every $\msp \in \mM$ and every $\mgr \in K$ if $\En_{\alpha, \eps} (\msp) > C$ then
    \begin{equation} \label{EnContr}
        \tEn_{\alpha, \eps} (\mgr * \msp) < \lambda \tEn_{\alpha, \eps} (\msp).
    \end{equation}
\end{proposition}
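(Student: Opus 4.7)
The plan is to exploit the Hilbert space structure of $L^2(M, \Leb)$. From the very definition of $\dens_{\alpha,\eps}$, linearity gives
\begin{equation*}
    \dens_{\alpha,\eps}[\mgr * \msp] = \int_{\Hol(M)} \dens_{\alpha,\eps}[f_* \msp] \, \dd \mgr(f) = \E_\mgr \left[ \dens_{\alpha,\eps}[f_* \msp] \right].
\end{equation*}
Applying the variance decomposition $\|\E X\|^2 = \E \|X\|^2 - \E \|X - \E X\|^2$ in $L^2(M, \Leb)$ yields
\begin{equation*}
    \tEn_{\alpha,\eps}(\mgr * \msp) = \E_\mgr \bigl[ \tEn_{\alpha,\eps}(f_* \msp) \bigr] - V_{\alpha,\eps}(\mgr, \msp),
\end{equation*}
where $V_{\alpha,\eps}(\mgr, \msp) = \tfrac{1}{2} \iint \| \dens_{\alpha,\eps}[f_* \msp] - \dens_{\alpha,\eps}[f'_* \msp] \|_{L^2}^2 \, \dd \mgr(f) \dd \mgr(f') \ge 0$ is the $L^2$-variance of $\dens_{\alpha,\eps}[f_*\msp]$ w.r.t.\ $\mgr$.

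By the uniform version of Proposition \ref{lm:tailsEst} (the remark following it), I can choose $\alpha > 0$ small enough and $C$ large enough that $\E_\mgr [\tEn_{\alpha,\eps}(f_* \msp)] \le (1+\delta) \tEn_{\alpha,\eps}(\msp)$ for any prescribed $\delta > 0$, uniformly over $\mgr \in K$, $\eps > 0$, and $\msp$ with $\En_{\alpha,\eps}(\msp) > C$. Thus Proposition \ref{prop:contr} is reduced to the following variance lower bound: there exist constants $c > 0$ and $C'$ such that $V_{\alpha,\eps}(\mgr, \msp) \ge c \cdot \tEn_{\alpha,\eps}(\msp)$ for all $\mgr \in K$, $\eps > 0$ and $\msp$ with $\En_{\alpha,\eps}(\msp) > C'$. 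Choosing $\delta < c$ would then give contraction with $\lambda = 1 + \delta - c < 1$.

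I would establish this variance lower bound by contradiction. Assuming it fails, extract a sequence $\mgr_n \in K$, $\msp_n \in \mM$, $\eps_n > 0$ with $\En_{\alpha,\eps_n}(\msp_n) \to \infty$ (forcing $\eps_n \to 0$ by Lemma \ref{lm:EnUpperBound}) and $V_{\alpha,\eps_n}(\mgr_n, \msp_n) / \tEn_{\alpha,\eps_n}(\msp_n) \to 0$. Truncate $\mgr_n$ to the set $\{L(f), \gamma(f)^{-1} < R\}$, which has $\mgr_n$-measure at least $1-o_R(1)$ uniformly by Markov applied to the uniform integrability bound. For $f$ in this set, Corollary \ref{cor:tEnChangeOneStepHol} gives $\tEn_{\alpha,\eps_n}(f_* \msp_n) \asymp \tEn_{\alpha,\eps_n}(\msp_n)$; combined with the collapse of $V$, Cauchy--Schwarz applied to $|\dens^2 - \dens'^2| \le |\dens - \dens'|(\dens + \dens')$ forces $\mTn_{\alpha,\eps_n}[f_* \msp_n]$ to be close in total variation to $\mTn_{\alpha,\eps_n}[\mgr_n * \msp_n]$ for $\mgr_n$-typical $f$. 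Proposition \ref{prop:WassEst} then transfers this to $W\bigl(f_* \mTn_{\alpha,\eps_n}[\msp_n],\, \mTn_{\alpha,\eps_n}[\mgr_n * \msp_n]\bigr) \to 0$ in $\mgr_n$-probability on the good set.

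In the final step I would pass to weak-$*$ limits: $\mgr_n \to \mgr_* \in K$ by compactness of $K$, $\mTn_{\alpha,\eps_n}[\msp_n] \to \msp_*$ and $\mTn_{\alpha,\eps_n}[\mgr_n * \msp_n] \to \msp_*'$ by compactness of $\mM$. For each fixed homeomorphism $f$, weak-$*$ continuity of $f_*$ gives $f_* \mTn_{\alpha,\eps_n}[\msp_n] \to f_* \msp_*$, so the previous estimate upgrades in the limit to $f_* \msp_* = \msp_*'$ for $\mgr_*$-a.e.~$f$ -- contradicting the no-deterministic-image assumption for $\mgr_* \in K$. The main obstacle will be the last step: combining the weak-$*$ convergence of $\mgr_n$ with the convergence of $\mTn_n$ and the Wasserstein closeness holding only on $\mgr_n$-typical $f$, uniformly as $R \to \infty$ and $\eps_n \to 0$, so that the pushforward identity $f_* \msp_* = \msp_*'$ genuinely holds $\mgr_*$-almost surely. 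This requires careful Fubini-type manipulations in the product space $\Hol(M) \times \mM$ together with the uniform integrability in $K$.
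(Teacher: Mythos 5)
Your proposal is correct and follows essentially the same route as the paper: the $L^2(M,\Leb)$ variance identity applied to $\dens_{\alpha,\eps}[f_*\msp]$, the uniform version of Proposition \ref{lm:tailsEst} to control $\E_{\mgr}[\tEn_{\alpha,\eps}(f_*\msp)]$, and a compactness/contradiction argument converting the collapse of the variance into a deterministic image $f_*\msp_* = \msp_*'$ for a limit measure $\mgr_* \in K$, via total-variation closeness of the normalized densities and Proposition \ref{prop:WassEst}. The difficulty you flag at the end is exactly the one the paper resolves in its final subsequence-extraction paragraph.
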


\begin{proof}

    Fix some small number $\delta > 0$ , take $\lambda = 1 - \delta$, and assume the contrary: for any $\alpha > 0$ and $C < \infty$ there exists a measure $\mgr \in K$, an $\eps > 0$ and a probability measure $\msp$ with energy $\tEn_{\alpha,\eps}(\msp) > C$, such that
	\begin{equation} \label{contrAssumption2}
		\tEn_{\alpha, \eps} (\mgr * \msp) \ge (1-\delta) \tEn_{\alpha, \eps} (\msp).
	\end{equation}

    Recall the following standard statement:
    \begin{lemma} \label{l:var}
        Assume that in some Hilbert space $\mathcal{H}$ a probability measure is given; in other words, one is given a random variable $v$ taking values in this space. Assume that this measure has finite second moment, and let $\bv:=\E v$ be its expectation. Then
        \begin{equation} \label{eq:v}
            \E \langle v-\bv,v-\bv\rangle = \E \langle v,v\rangle - \langle \bv,\bv\rangle
        \end{equation}
    \end{lemma}

    Let us apply this statement to~$L^2 (M, \text{\rm Leb})$. Namely, for a given measure $\msp$ its averaged image $\mgr*\msp$ is the expectation of $f_*\msp$, where the homeomorphism $f$ of $M$ is taken randomly w.r.t. the measure~$\mgr$.
    Hence, the same applies for the density~$\dens_{\alpha,\eps}[\msp]$, given by~\eqref{eq:dens-a-e} (see Definition~\ref{def:dens}):
    \begin{equation}\label{eq:rho-expect}
        \dens_{\alpha,\eps}[\mgr*\msp] = \E_{\mgr}  \, \dens_{\alpha,\eps}[f_* \msp].
    \end{equation}
    Substituting this into~\eqref{eq:v} (with $v=\dens_{\alpha,\eps}[f_* \msp]$), and taking into account the definition~\eqref{eq:tEn}, we get
    \begin{multline} \label{ContrThmVarFormula}
        \mathbb{E}_{\mgr} \left[ \int_M \left( \dens_{\alpha, \eps}[f_*\msp] (y) - \dens_{\alpha, \eps}[\mgr * \msp] (y) \right)^2 \dd \Leb(y) \right]=
	\\
        = \mathbb{E}_{\mgr} \left[ \tEn_{\alpha, \eps} (f_* \msp) \right] - \tEn_{\alpha, \eps} (\mgr * \msp);
    \end{multline}
    in particular,
    \begin{equation}\label{eq:avg-bound}
        \tEn_{\alpha, \eps} (\mgr * \msp) \le \mathbb{E}_{\mgr} \left[ \tEn_{\alpha, \eps} (f_* \msp) \right].
    \end{equation}

    By Proposition~\ref{lm:tailsEst} we can find sufficiently small $\alpha  > 0$ and sufficiently large $C < \infty$ (uniformly in $\mgr$), such that for any $\eps>0$ and any measure $\msp$ on $M$ with $\tEn_{\alpha, \eps}(\msp) > C$ the inequality~\eqref{tailsEst1} holds, and thus (taking only the upper bound)
    \begin{equation}\label{eq:E-f-delta}
        \mathbb{E}_{\mgr} \left[ \tEn_{\alpha, \eps} (f_* \msp) \right]
        \le (1 + \delta)\tEn_{\alpha, \eps} (\msp).
    \end{equation}

    Denote by $\msp$ a probability measure with $\tEn_{\alpha, \eps}(\msp) > C$, for which the inequality~\eqref{contrAssumption2} holds. Substituting~\eqref{eq:E-f-delta} and~\eqref{contrAssumption2} in the right hand side of~\eqref{ContrThmVarFormula}, we get

    \begin{equation} \label{eq:exp-rho}
        \mathbb{E}_{\mgr} \left[ \int_M \left( \dens_{\alpha, \eps} (f_* \msp) (y) - \dens_{\alpha, \eps} (\mgr * \msp) (y) \right)^2 d \Leb(y) \right] < 2 \delta \, \tEn_{\alpha, \eps} (\msp).
    \end{equation}
    and thus
    \begin{equation} \label{ExpEst1}
        \mathbb{E}_{\mgr} \left[ \int_M \frac{\left( \dens_{\alpha, \eps} (f_* \msp) (y) - \dens_{\alpha, \eps} (\mgr * \msp) (y) \right)^2 }{\tEn_{\alpha, \eps} (\msp)} d \Leb(y) \right] < 2 \delta .
    \end{equation}

    The final step is to show that measures
    $$
        \mTn_{\alpha, \eps} (f_* \msp)=\dfrac{\dens_{\alpha, \eps} (f_* \msp) (y)^2}{\tEn_{\alpha, \eps} (f_* \msp)} d \Leb(y) \ \  \text{and} \ \ \mTn_{\alpha, \eps}(\mgr * \msp)=\dfrac{\dens_{\alpha, \eps} (\mgr * \msp) (y)^2}{\tEn_{\alpha, \eps} (\mgr * \msp)} d \Leb(y)
    $$
    are close with high probability. And indeed, we have the following statement, estimating (even) the total variations distance:
    \begin{lemma} \label{l.West}
        Under the assumptions above
        \begin{equation}\label{eq:Probably-TV}
            \p_{\mgr} \left[ \TV (\mTn_{\alpha, \eps} (f_* \msp), \mTn_{\alpha, \eps}(\mgr * \msp)) > 10 \sqrt[8]{\delta} \right] < 4 \sqrt{\delta},
        \end{equation}
        and hence
        \begin{equation}\label{eq:Probably}
            \p_{\mgr} \left[ W(\mTn_{\alpha, \eps} (f_* \msp), \mTn_{\alpha, \eps}(\mgr * \msp)) > 10 \diam(M) \sqrt[8]{\delta} \right] < 4 \sqrt{\delta}.
        \end{equation}
    \end{lemma}
    \begin{proof}[Proof of Lemma \ref{l.West}]
        We start by working with the non-normalized measures  $\mT_{\alpha,\eps}(f_*\msp)$ and $\mT_{\alpha,\eps}(\mgr*\msp)$. To simplify the notation, denote
        \[
            g_f(y):=\rho_{\alpha,\eps}[f_*\msp](y), \quad \bg(y):=\rho_{\alpha,\eps}[\mgr*\msp](y).
        \]
        Therefore, we have
        $$
            \mT_{\alpha,\eps}[f_*\msp]=(g_f(y))^2d\text{Leb}(y)\ \ \text{\rm and}\ \ \mT_{\alpha,\eps}[\mgr*\msp]=(\bg(y))^2d\text{Leb}(y).
        $$
        From~\eqref{contrAssumption2} we have
        \[
            \tEn_{\alpha, \eps} (\msp) \le \frac{1}{1-\delta} \tEn_{\alpha, \eps} (\mgr * \msp).
        \]
        Joining with~\eqref{eq:exp-rho}, we have
        \begin{equation} \label{eq:exp-rho-f}
            \E_f \left[ \int_M |g_f (y) - \bg (y) |^2 \, d\Leb(y)\right] \le \frac{2\delta}{1-\delta} \tEn_{\alpha, \eps} (\mgr * \msp).
        \end{equation}
        By Markov inequality, with the probability at least $1-\frac{2\sqrt{\delta}}{1-\delta}$, one has
        \begin{multline} \label{eq:f-bar}
            \int_M |g_f (y) - \bg (y) |^2 \, d\Leb(y) \le \sqrt{\delta}\,  \tEn_{\alpha, \eps} (\mgr * \msp)
            \\
            = \sqrt{\delta} \, \mT_{\alpha, \eps} [\mgr* \msp] (M)
            = \sqrt{\delta} \int_M \bg^2(y) \, d\Leb(y).
        \end{multline}
        Now, for any $f$ for which~\eqref{eq:f-bar} holds, consider the set
        \[
            X_f:=\{y\in M \, : \, |g_f (y) - \bg (y) | \le \sqrt[8]{\delta} \cdot \bg(y)\}.
        \]
        Again from Markov inequality type argument, one has
        \begin{equation} \label{eq:M-Xf}
            \mT_{\alpha, \eps} [\mgr* \msp] (X_f) \ge \left( 1-\sqrt[4]{\delta}\right) \mT_{\alpha, \eps} [\mgr* \msp] (M).
        \end{equation}
        Indeed, on the complement $M\setminus X_f$ one has $|g_f (y) - \bg (y) |^2> \sqrt[4]{\delta} \cdot \bg^2(y)$; integrating, one gets
        \[
            \int_{M\setminus X_f} |g_f (y) - \bg (y) |^2 \, d\Leb(y) > \sqrt[4]{\delta} \cdot \mT_{\alpha, \eps} [\mgr* \msp] (M\setminus X_f).
        \]
        Thus, if~\eqref{eq:M-Xf} did not hold, it would imply
        \[
            \mT_{\alpha, \eps} [\mgr* \msp] (M\setminus X_f) > \sqrt[4]{\delta} \cdot  \mT_{\alpha, \eps} [\mgr* \msp] (M),
        \]
        hence providing a contradiction with~\eqref{eq:f-bar}.

        Now,~\eqref{eq:M-Xf} implies that the non-normalized measures $\mT_{\alpha,\eps}[f_*\msp]$ and $\mT_{\alpha,\eps}[\mgr*\msp]$ share a common part
        \[
            \Xi_f:=(1-\sqrt[8]{\delta})^2 \, \bg^2(y) \cdot \Ind_{X_f}(y) \, d\Leb(y)
        \]
        of measure at least
        \begin{equation} \label{eq:Xi}
            \Xi_f(M)\ge (1-\sqrt[8]{\delta})^2 (1-\sqrt[4]{\delta}) \cdot \mT_{\alpha, \eps} [\mgr* \msp] (M).
        \end{equation}

        Finally, note that the normalization constants are also close with high probability, namely, for $f$ such that the inequality~\eqref{eq:f-bar} holds. Indeed,  the inequality~\eqref{eq:f-bar}  can be rewritten as
        \[
            \|g_f-\bg \|_{L_2(M)}^2 \le \sqrt{\delta} \cdot \|\bg \|_{L_2(M)}^2;
        \]
        hence, for any $f$ for which it holds, the triangle inequality implies
        \begin{equation} \label{eq:mTf}
            \mT_{\alpha, \eps} [f_* \msp] (M) =
            \|g_f\|_{L_2(M)}^2 \le (1+\sqrt[4]{\delta})^2
            \mT_{\alpha, \eps} [\mgr* \msp] (M).
        \end{equation}
        Now, the normalized measures $\mTn_{\alpha, \eps} [f_* \msp]$ and $\mTn_{\alpha, \eps} [\mgr * \msp]$ share the common part $\frac{1}{C_f} \Xi_f$, where
        \[
            C_f=\max(\mT_{\alpha, \eps} [f_* \msp](M), \mT_{\alpha, \eps} [\mgr * \msp](M))
        \]
        is the maximum of two normalization constants. Using~\eqref{eq:Xi} and~\eqref{eq:mTf}, we see that this part has measure at least
        \[
            \frac{1}{C_f} \Xi_f(M) \ge \frac{(1-\sqrt[8]{\delta})^2 (1-\sqrt[4]{\delta})}{(1+\sqrt[4]{\delta})^2} \ge 1-10\sqrt[8]{\delta}.
        \]
        Hence, for any $f$ satisfying~\eqref{eq:f-bar} the total variation distance between the normalized measures does not exceed
        \[
            \TV(\mTn_{\alpha, \eps} (f_* \msp), \mTn_{\alpha, \eps}(\mgr * \msp))) \le 10\sqrt[8]{\delta}.
        \]
        As~\eqref{eq:f-bar} holds with the probability at least~$1-\frac{2\sqrt{\delta}}{1-\delta}> 1-4\sqrt{\delta}$, this establishes conclusion~\eqref{eq:Probably-TV}. Finally, this immediately implies conclusion~\eqref{eq:Probably} due to Lemma~\ref{lm:WasEst}.
    \end{proof}

    \vspace{3pt}

    We are now ready to conclude the proof of Proposition~\ref{prop:contr}. Namely, the conclusion~\eqref{eq:Probably} states that with high probability the measure $\mTn_{\alpha,\eps}(f_* \msp)$ is close to the deterministic one, $\mTn_{\alpha,\eps}(\mgr * \msp)$. The next step is to show that with high probability the first measure is close to $f_*$-image of a given measure,~$\mTn_{\alpha,\eps}(\msp)$.

    It easily follows from inequality (\ref{RegCondHol}) that we can choose $R < \infty$ (again, uniformly in $\mgr$), such that
    \begin{equation*}
        \p_{\mgr} \left[ L(f) < R \ \text{and } \gamma(f)^{-1} < R \right] > 1 - \sqrt[4]{\delta}
    \end{equation*}
    By Proposition \ref{prop:WassEst} we can choose $C$ big enough, so that for any $f \in \Hol(M)$, such that $L(f), \gamma(f)^{-1} < R$ inequality (\ref{tEnWassInv}) holds, and using triangle inequality we arrive to
    \begin{equation} \label{limitLaw}
        \p_{\mgr} \left[ W(f_* \mTn_{\alpha, \eps} (\msp), \mTn_{\alpha, \eps}(\mgr * \msp)) > 10 \diam(M) \sqrt[8]{\delta} + \delta \right] < 5 \sqrt[4]{\delta}.
    \end{equation}

    Now consider the sequence $\delta_n = \frac{1}{n}$ and denote by $\mgr_n$ and $\msp_n$ the corresponding measures and corresponding parameters by $\alpha_n, \eps_n$, for which (\ref{contrAssumption2}) holds. Then the measures $\mTn_{\alpha_n, \eps_n} (\msp_n)$ have a weakly convergent subsequence. Futhermore, we can take another subsequence, such that the measures $\mTn_{\alpha_{n_k}, \eps_{n_k}} (\mgr_{n_k} * \msp_{n_k})$ also converge. Passing to a subsequence one more time we can guarantee (due to compactness of $K$) that 
    \begin{equation*}
        \lim_{k \to \infty} \mgr_{n_k} = \mgr
    \end{equation*}
    for some $\mgr \in K$. It remains to notice that due to inequality (\ref{limitLaw}) the limit measure
    \begin{equation*}
        m = \lim_{k \to \infty} \mTn_{\alpha_{n_k}, \eps_{n_k}}(\msp_{n_k})
    \end{equation*}
    has an almost surely constant image under the action of $f \in \supp(\mgr)$, that is equal to
    \begin{equation*}
        \tilde{m} = \lim_{k \to \infty} \mTn_{\alpha_{n_k}, \eps_{n_k}} (\mgr_{n_k} * \msp_{n_k}) = \lim_{k \to \infty} \mTn_{\alpha_{n_k}, \eps_{n_k}} (\mgr * \msp_{n_k}),
    \end{equation*}
    which  contradicts our assumption.	
\end{proof}

\subsection{Proof of Theorem \ref{thm:mainHol-3}} \label{subsection:finalProof}

Now we have all the ingredients to implement the plan outlined in Section \ref{subsection:plan}. First we modify Proposition \ref{prop:contr} (contraction of energy) so that we have control over the image of any probability measure, not just those with high energies. Then we apply Markov's inequality (Lemma \ref{lm:EnLowerBound}) to finish the proof of Theorem \ref{thm:mainHol-3}.

We start by expanding Proposition \ref{prop:contr} so that it includes all measures on $M$. To do so, we need to adjust the upper bound~\eqref{EnContr} in order to include low-energy measures~$\msp$:

\begin{lemma} \label{l:uniform-C}
    Assume that the measure $\mgr$ satisfies the condition (\ref{RegCondHol}) with some $\beta$, and that $\alpha < \beta$ and $C$ are given. Then there exists $C'$ such that for any $\eps > 0$ and any measure $\msp$ on $M$ with $\En_{\alpha, \eps} (\msp) \le C$ one has
    \begin{equation*}
        \En_{\alpha, \eps} (\mgr*\msp) \le C'.
    \end{equation*}
\end{lemma}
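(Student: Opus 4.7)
My plan is to combine the pointwise-in-$f$ one-step bound of Corollary~\ref{cor:EnOneStepForm2} with the Jensen-type inequality already exploited in Proposition~\ref{prop:contr}. Since $\tEn_{\alpha,\eps}(\cdot) = \|\dens_{\alpha,\eps}[\,\cdot\,]\|_{L^2(M,\Leb)}^2$ and by Definition~\ref{def:dens} we have $\dens_{\alpha,\eps}[\mgr*\msp] = \E_\mgr \dens_{\alpha,\eps}[f_*\msp]$, convexity of the squared $L^2$ norm yields
\[
\tEn_{\alpha,\eps}(\mgr*\msp) \le \E_\mgr \tEn_{\alpha,\eps}(f_*\msp).
\]
The task thus reduces to producing a uniform upper bound on the right-hand side whenever $\En_{\alpha,\eps}(\msp) \le C$.

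Fix $\eps$ below the threshold $\eps_0$ provided by Corollary~\ref{cor:EnOneStepForm2}; the complementary case $\eps \ge \eps_0$ is handled uniformly by Lemma~\ref{lm:EnUpperBound}, whose explicit bound on $\En_{\alpha,\eps}(\mgr*\msp)$ depends only on $\eps$ and is bounded on $[\eps_0,1/e)$ and vanishes for $\eps \ge 1/e$. Corollary~\ref{cor:EnOneStepForm2} gives the pointwise bound
\[
\En_{\alpha,\eps}(f_*\msp) \le 2 \left(\frac{2(1+\log L(f))}{\gamma(f)}\right)^{\alpha} C + C_1
\]
for a constant $C_1 = C_1(\alpha)$ independent of $\msp, \eps, f$. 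Converting this to an analogous upper bound for $\tEn_{\alpha,\eps}(f_*\msp)$ through Proposition~\ref{prop:EnEquiv} (applied with, say, $\delta = 1$) and then integrating over $f \sim \mgr$ produces
\[
\E_\mgr \tEn_{\alpha,\eps}(f_*\msp) \le C_2 \, \E_\mgr\!\left[\left(\frac{1+\log L(f)}{\gamma(f)}\right)^{\alpha}\right] + C_3.
\]
Since $\alpha < \beta$, H\"older's inequality applied to hypothesis~(\ref{RegCondHol}) shows the expectation on the right is finite. Combining with the Jensen bound above and converting $\tEn_{\alpha,\eps}(\mgr*\msp)$ back to $\En_{\alpha,\eps}(\mgr*\msp)$ through Proposition~\ref{prop:EnEquiv} one last time yields the desired constant $C'$.

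I do not foresee a serious obstacle here: the argument just repackages tools already developed earlier in the paper, and the only subtlety is ensuring that all additive constants arising from Proposition~\ref{prop:EnEquiv} and from Corollary~\ref{cor:EnOneStepForm2} are genuinely independent of $\eps$ and $\msp$, which is precisely how those statements are formulated.
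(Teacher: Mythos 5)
Your proposal is correct and follows essentially the same route as the paper: the Jensen/convexity bound $\tEn_{\alpha,\eps}(\mgr*\msp)\le \E_\mgr \tEn_{\alpha,\eps}(f_*\msp)$, the pointwise one-step bound of Corollary~\ref{cor:EnOneStepForm2}, the comparison of $\En_{\alpha,\eps}$ and $\tEn_{\alpha,\eps}$ via Proposition~\ref{prop:EnEquiv}, and finiteness of the $\alpha$-moment from~(\ref{RegCondHol}) since $\alpha<\beta$. Your explicit treatment of the regime $\eps\ge\eps_0$ and of the final conversion back to $\En_{\alpha,\eps}$ is a small point of added care over the paper's write-up, not a difference in method.
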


\begin{corollary}\label{c:En}
    In the assumptions of Proposition~\ref{prop:contr} one can conclude that there exist $\alpha > 0, \tC < \infty, \lambda < 1$, such that for every $\mgr \in K$, every $\eps > 0$ and every measure $\msp$ on $M$
	\begin{equation} \label{eq:EnContr}
		\En_{\alpha, \eps} (\mgr * \msp) < \max(\lambda \En_{\alpha, \eps} (\msp),\tC).
	\end{equation}
\end{corollary}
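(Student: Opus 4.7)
The plan is to reduce Corollary~\ref{c:En} to Proposition~\ref{prop:contr} by handling separately the regime of high and low initial energy, using Proposition~\ref{prop:EnEquiv} to translate between $\tEn_{\alpha,\eps}$ and $\En_{\alpha,\eps}$.

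First I take $\alpha > 0$, $\lambda_0 < 1$ and $C_0$ provided by Proposition~\ref{prop:contr}; these work uniformly over $\mgr \in K$. In the high energy regime $\En_{\alpha,\eps}(\msp) > C_0$, Proposition~\ref{prop:contr} already yields the contraction $\tEn_{\alpha,\eps}(\mgr * \msp) < \lambda_0 \tEn_{\alpha,\eps}(\msp)$. To pass from $\tEn$ back to $\En$, I fix $\delta > 0$ small enough that $\lambda_1 := \lambda_0(1+\delta)^2 < 1$, and let $C_\delta$ be the constant given by Proposition~\ref{prop:EnEquiv} (in the form of Remark~\ref{r:EnEqForm}), so that $\En_{\alpha,\eps}(\msp') \approx_{(\delta, C_\delta)} \tEn_{\alpha,\eps}(\msp')$ holds for \emph{every} probability measure $\msp'$. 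Applying the equivalence to $\mgr * \msp$ from above and to $\msp$ from below, and chaining with the $\tEn$-contraction, I obtain in the high energy regime
\begin{equation*}
    \En_{\alpha,\eps}(\mgr * \msp) \le (1+\delta)\tEn_{\alpha,\eps}(\mgr * \msp) + C_\delta \le (1+\delta)\lambda_0\tEn_{\alpha,\eps}(\msp) + C_\delta \le \lambda_1 \En_{\alpha,\eps}(\msp) + B,
\end{equation*}
where $B := (1+\delta)\lambda_0 C_\delta + C_\delta$ is an explicit constant.

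Next I choose any $\lambda \in (\lambda_1, 1)$ and set the threshold $T := \max(C_0, B/(\lambda - \lambda_1))$. For $\msp$ with $\En_{\alpha,\eps}(\msp) > T$, the previous display combined with $\lambda_1 \En + B \le \lambda \En$ (valid as soon as $\En \ge B/(\lambda - \lambda_1)$) gives the desired contraction $\En_{\alpha,\eps}(\mgr * \msp) \le \lambda \En_{\alpha,\eps}(\msp)$. For $\msp$ with $\En_{\alpha,\eps}(\msp) \le T$, I invoke Lemma~\ref{l:uniform-C} with $C = T$ to produce a constant $C'$ with $\En_{\alpha,\eps}(\mgr * \msp) \le C'$. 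Setting $\tC := C' + 1$ ensures strict inequality and gives the conclusion $\En_{\alpha,\eps}(\mgr * \msp) < \max(\lambda \En_{\alpha,\eps}(\msp), \tC)$ in both regimes.

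The main point requiring care is that Lemma~\ref{l:uniform-C} is stated for a single distribution $\mgr$, whereas the corollary demands $\tC$ uniform over the compact family $K$. However, the Lemma's proof uses only the finiteness of the integral in~(\ref{RegCondHol}), which by the hypothesis of Theorem~\ref{thm:mainHol-3} is bounded by $C_0$ uniformly over $\mgr \in K$; consequently the resulting $C'$ can be chosen to depend only on $\beta$, $\alpha$, $C_0$ and $T$, not on the particular $\mgr$. The same comment applies to the threshold $C_0$ and the ratio $\lambda_0$ extracted from Proposition~\ref{prop:contr}, which is already uniform in $K$ by construction. With these uniform choices in hand, the two regimes assemble into the single estimate~(\ref{eq:EnContr}).
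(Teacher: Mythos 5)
Your proof is correct and follows essentially the same route as the paper: convert the $\tEn_{\alpha,\eps}$-contraction of Proposition~\ref{prop:contr} into an $\En_{\alpha,\eps}$-contraction via the equivalence of energies (you use the additive form of Remark~\ref{r:EnEqForm} where the paper uses the multiplicative form of Proposition~\ref{prop:EnEquiv}, which is an immaterial difference), and then dispose of low-energy measures with Lemma~\ref{l:uniform-C}. Your explicit remark that the constants in Lemma~\ref{l:uniform-C} depend only on the uniform moment bound $C_0$, and hence can be taken uniformly over $K$, is a detail the paper leaves implicit and is worth having spelled out.
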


\begin{proof}[Proof of Lemma~\ref{l:uniform-C}]

    Condition (\ref{RegCondHol}) implies the finiteness of the expectation
    \begin{equation}\label{eq:I-exp}
        C_I:= \E_{\mgr} \Lambda(f)^{\alpha} < \infty.
    \end{equation}
    Now,~\eqref{eq:avg-bound} implies that for every measure $\msp$ we have
    \[
        \tEn_{\alpha,\eps}(\mgr*\msp) \le \E_{\mgr} \tEn_{\alpha,\eps}(f_* \msp).
    \]
    At the same time Proposition~\ref{prop:EnEquiv} for $\delta=\frac{1}{2}$ implies that for some constant $C_1$ one has for any measure $\msp'$
    \[
        \tEn_{\alpha,\eps}(\msp') \le \max(2\En_{\alpha,\eps}(\msp'), C_1 ).
    \]
    Applying this for $\msp'=f_* \msp$ and joining it with Corollary~\ref{cor:EnOneStepForm2}, we get
    \begin{equation*}
        \tEn_{\alpha,\eps}(f_*\msp) \le \max(2\En_{\alpha,\eps}(f_*\msp), C_1 ) \le 2 \max \left( 2 \Lambda(f)^{\alpha} \En_{\alpha, \eps} (\msp), C_2 \right)+ C_1,
    \end{equation*}
    where $C_2$ is equal to $C$ from Corollary (\ref{cor:EnOneStepForm2}). Taking the expectation w.r.t.~$\mgr$ and using~\eqref{eq:I-exp}, we finally get a uniform bound
    \[
        \tEn_{\alpha,\eps}(\mgr*\msp) \le 4 C_I C + 2 C_2 + C_1 =: C'.
    \]

\end{proof}

\begin{proof}[Proof of Corollary~\ref{c:En}]
    In the proof of Proposition~\ref{prop:contr} we can take $\alpha$ arbitrarily small, so we can assume that $\alpha < \beta$. Applying Proposition~\ref{prop:EnEquiv} to both sides of~\eqref{EnContr}, where we take $\delta$ sufficiently small so that $\frac{1-\delta}{1+\delta}>\lambda$, allows to conclude that there exists some constant $C_3$ such that for all $\eps$ and $\msp$
    \[
        \En_{\alpha, \eps} (\mgr * \msp) < \lambda' \En_{\alpha, \eps} ( \msp) \quad \text{ if } \, \En_{\alpha, \eps} ( \msp) > C_3,
    \]
    where $\lambda'=\frac{1+\delta}{1-\delta} \lambda <1$. Meanwhile, Lemma~\ref{l:uniform-C} implies that there exists some $C_4$ such that
    \[
        \En_{\alpha, \eps} (\mgr * \msp) <C_4 \quad \text{ if } \, \En_{\alpha, \eps} ( \msp) \le C_3.
    \]
    Taking $\tC := C_4$, we get the desired~\eqref{eq:EnContr}.
\end{proof}

Now we are ready to complete the proof of Theorem~\ref{thm:mainHol-3} (and hence of Theorem \ref{thm:mainHol-1}).

\begin{proof}[Proof of Theorem~\ref{thm:mainHol-3}]

    Let $\alpha,\tC$ be as in Corollary~\ref{c:En}: for any $\mgr \in K$, any $\eps > 0$ and for any measure $\msp$ one has
    \[
        \En_{\alpha, \eps} (\mgr * \msp) < \max(\lambda \En_{\alpha, \eps} (\msp),\tC);
    \]
    applying this $n$ times, we get
    \[
        \En_{\alpha, \eps} (\mgr^n * \msp) < \max (\lambda^n \En_{\alpha, \eps} (\msp),\tC).
    \]
    Recall that Lemma~\ref{lm:EnUpperBound} gives a uniform upper bound 
    \begin{equation*}
        \En_{\alpha, \eps} (\msp) \le (\omega_k + c_{\alpha, k}) (- \log(\eps))^{\alpha},
    \end{equation*}
    choose
    \begin{equation*}
        \eps := e^{- \left( \frac{1}{\lambda} \right)^{\frac{n}{\alpha}}},
    \end{equation*}
    then we have
    \begin{equation*}
        \En_{\alpha, \eps} (\mgr^n * \msp) < \max \left( (\omega_k + c_{\alpha, k}), \tC \right).
    \end{equation*}
    Now, applying Lemma~\ref{lm:EnLowerBound} for any $r > \eps = e^{-\kappa^n}$, where $\kappa := (1/\lambda)^{1/\alpha} > 1$, we deduce that
    \begin{equation*}
        (\mgr^n * \msp) (B_r(x)) < C_{\alpha, k} \sqrt{ \max \left( (\omega_k + c_{\alpha, k}), \tC \right)} \cdot (- \log(r))^{-\frac{\alpha}{2}},
    \end{equation*}
    which completes the proof of Theorem \ref{thm:mainHol-3}.

\end{proof}



\section{Lipschitz homeomorphisms} \label{sec:LipProof}

In this section we will outline the proof of Theorem \ref{thm:mainLip-3}. Since it utilizes the same method as the proof of Theorem \ref{thm:mainHol-3} we will give modified versions of key propositions avoiding technical details. In this section we will work with a fixed $\alpha > 0$.

First, we will need an analog of Proposition \ref{prop:EnChangeHol} to have an estimate for $\En_{\alpha, \eps}(f_* \msp)$ for a Lipschitz continuous $f$. It can be formulated as follows:

\begin{proposition}
    For every $\delta > 0$ there exists $\eps_0 > 0$ and $B < \infty$, such that every $0 < \eps < \eps_0$, every $f \in \Lip(M)$ and every $\msp \in \mM$ the following holds:
    \begin{equation} \label{EnLipOneStep}
            \En_{\alpha, \eps}(f_* \msp) \le (1 + \delta \log(\fL(f)))^{\alpha} (\En_{\alpha, \eps}(\msp) + B).
        \end{equation}
\end{proposition}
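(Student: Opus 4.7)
The plan is to follow the template of Proposition \ref{prop:EnChangeHol}, with the bi-Lipschitz bound $d(x,z)/\fL(f) \le d(f(x),f(z)) \le \fL(f)\,d(x,z)$ replacing the coarser H\"older-type distortion estimate. This sharper control allows the multiplicative factor $\left(2(1+\log L(f))/\gamma(f)\right)^{\alpha}$ of the H\"older case to be tightened to $(1+\delta\log\fL(f))^{\alpha}$, which is the key quantitative improvement. Given $\delta > 0$, I set the threshold $r_0 := e^{-1/\delta}$ and choose $\eps_0 > 0$ so that Parts \textbf{(II)} and \textbf{(IV)} of Proposition \ref{prop:UAEps} both apply on $(0, r_0)$ with an auxiliary small parameter $\delta_1 > 0$. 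I then split the integral
\begin{equation*}
    \En_{\alpha,\eps}(f_*\msp) = \iint_{M \times M} U_{\alpha,\eps}(d(f(x),f(z)))\,\dd\msp(x)\,\dd\msp(z)
\end{equation*}
according to whether $d(x,z) < r_0$ or $d(x,z) \ge r_0$.

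On the close region, assuming WLOG $\fL(f) \ge 1$, the monotonicity of $U_{\alpha,\eps}$ gives $U_{\alpha,\eps}(d(f(x),f(z))) \le U_{\alpha,\eps}(d(x,z)/\fL(f))$, and Part \textbf{(IV)} of Proposition \ref{prop:UAEps} applied with $r_1 = d(x,z)/\fL(f)$ and $r_2 = d(x,z)$ yields
\begin{equation*}
    U_{\alpha,\eps}(d(x,z)/\fL(f)) \le (1+\delta_1)\left(1 + \frac{\log\fL(f)}{|\log d(x,z)|}\right)^{\alpha} U_{\alpha,\eps}(d(x,z)).
\end{equation*}
Since the choice of $r_0$ forces $|\log d(x,z)| > 1/\delta$, the middle factor is bounded by $(1+\delta\log\fL(f))^{\alpha}$, and integrating gives the main term $(1+\delta_1)(1+\delta\log\fL(f))^{\alpha}\En_{\alpha,\eps}(\msp)$. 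On the far region, the bi-Lipschitz bound gives $d(f(x),f(z)) \ge r_0/\fL(f)$, and monotonicity combined with Part \textbf{(II)} provide
\begin{equation*}
    U_{\alpha,\eps}(d(f(x),f(z))) \le U_{\alpha,\eps}(r_0/\fL(f)) \le 2c_{\alpha,k}\bigl(\tfrac{1}{\delta} + \log\fL(f)\bigr)^{\alpha} = 2c_{\alpha,k}\delta^{-\alpha}(1+\delta\log\fL(f))^{\alpha},
\end{equation*}
supplying a uniform constant multiple of $(1+\delta\log\fL(f))^{\alpha}$ to serve as the additive $B$.

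The main technical obstacle is eliminating the auxiliary factor $(1+\delta_1)$ coming from Part \textbf{(IV)}, since the target inequality has the clean form $(\En_{\alpha,\eps}(\msp) + B)$ with no extra multiplicative factor on the energy. I would handle this by running the internal computation with a slightly smaller parameter $\delta' < \delta$ in place of $\delta$ in the close-region argument and then choosing $\delta_1$ small enough that $(1+\delta_1)(1+\delta'\log\fL(f))^{\alpha} \le (1+\delta\log\fL(f))^{\alpha}$ whenever $\log\fL(f)$ exceeds a threshold $L_0 = L_0(\delta, \delta', \delta_1)$; for $\log\fL(f) \le L_0$ the map $f$ has bounded distortion and the residual excess $\delta_1 \En_{\alpha,\eps}(\msp)$ can be folded into $B$ via a direct estimate for nearly isometric maps. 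Careful parameter juggling of $\delta, \delta', \delta_1, L_0$, and the cutoff $r_0$ is the delicate point of the argument.
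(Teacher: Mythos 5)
Your overall strategy is the right one and is exactly the adaptation of the proof of Proposition \ref{prop:EnChangeHol} that the paper intends (the paper states this proposition without proof, deferring to the H\"older case). The splitting at $r_0 = e^{-1/\delta}$, the use of $d(f(x),f(z)) \ge d(x,z)/\fL(f)$ together with monotonicity and Part \textbf{(IV)}, the conversion of $\log\fL(f)/|\log d(x,z)|$ into $\delta\log\fL(f)$, and the far-region bound via Part \textbf{(II)} are all correct (modulo the minor point that $r_0$ must be taken as the minimum of $e^{-1/\delta}$ and the radii furnished by Parts \textbf{(II)} and \textbf{(IV)}, which only helps). This correctly yields
\begin{equation*}
\En_{\alpha,\eps}(f_*\msp) \le (1+\delta_1)\,(1+\delta\log\fL(f))^{\alpha}\,\En_{\alpha,\eps}(\msp) + B\,(1+\delta\log\fL(f))^{\alpha}.
\end{equation*}

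The gap is in your final step, the removal of the factor $(1+\delta_1)$. Your reduction to the regime $\log\fL(f) \ge L_0$ is fine, but for $\log\fL(f) < L_0$ you are left with an excess of order $\delta_1\,\En_{\alpha,\eps}(\msp)$, and this \emph{cannot} be ``folded into $B$'': $B$ must be uniform over all $\msp\in\mM$ and all $\eps<\eps_0$, while $\En_{\alpha,\eps}(\msp)$ is unbounded (take $\msp$ a point mass, so that $\En_{\alpha,\eps}(\msp) = U_{\alpha,\eps}(0)\sim \eps^{-k}$). Note that at $\fL(f)=1$ the claimed inequality forces the coefficient of $\En_{\alpha,\eps}(\msp)$ to be exactly $1$, so no multiplicative slack whatsoever is permitted for nearly isometric maps; the ``direct estimate for nearly isometric maps'' you invoke would have to be a pointwise bound of the form $U_{\alpha,\eps}(r/\fL(f)) \le (1+\delta\log\fL(f))^{\alpha}U_{\alpha,\eps}(r) + C$, and this does not follow from Proposition \ref{prop:UAEps} or Proposition \ref{prop:singEst}, which only compare $U_{\alpha,\eps}$ to its model $c_{\alpha,k}|\log r|^{\alpha}$ up to a multiplicative error; an additive-error asymptotic for $U_{\alpha,\eps}$ would be needed, and you neither state nor prove one. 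I would point out, in your defense, that the weaker inequality you do establish, namely $\En_{\alpha,\eps}(f_*\msp) \le (1+\delta)(1+\delta\log\fL(f))^{\alpha}(\En_{\alpha,\eps}(\msp)+B)$, is precisely what Corollary \ref{cor:EnLip} and the rest of the Lipschitz argument actually consume, so the gap concerns the literal form of the statement rather than the downstream use.
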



In this case useful corollaries can be formulated as follows:

\begin{corollary} \label{cor:EnLip}
    For every $\delta > 0$ there exists $C < \infty$ such that for every $\eps > 0$, every $f \in \Lip(M)$ and every $\msp \in \mM$, such that $\En_{\alpha, \eps} (\msp) > C$ the following holds:
    \begin{equation} \label{EnLipOneStepCor}
        \frac{1}{1 + \delta} (1 + \delta \log ( \fL(f) ))^{-\alpha} < \frac{\En_{\alpha, \eps}(f_* \msp)}{\En_{\alpha, \eps}(\msp)} < (1 + \delta) (1 + \delta \log ( \fL(f) ))^{\alpha}.
    \end{equation}
\end{corollary}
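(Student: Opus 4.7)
The strategy will exactly mirror the derivation of Corollary \ref{cor:EnHol} from Proposition \ref{prop:EnChangeHol}: the one-step bound (\ref{EnLipOneStep}) will be applied once to obtain the upper estimate on $\En_{\alpha,\eps}(f_*\msp)/\En_{\alpha,\eps}(\msp)$, and a second time — to the inverse map — to obtain the lower estimate. On both occasions an auxiliary parameter $\delta'<\delta$ will be fed into (\ref{EnLipOneStep}), so that the additive constant $B$ produced by the one-step inequality can subsequently be absorbed into a multiplicative $(1+\delta)$-factor using the hypothesis $\En_{\alpha,\eps}(\msp)>C$.

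For the upper bound I would first invoke the previous proposition with parameter $\delta':=\delta/2$, obtaining
\[
    \En_{\alpha,\eps}(f_*\msp)\le(1+\delta'\log\fL(f))^{\alpha}\bigl(\En_{\alpha,\eps}(\msp)+B\bigr).
\]
Dividing by $\En_{\alpha,\eps}(\msp)$ and choosing $C$ large enough that $B/\En_{\alpha,\eps}(\msp)<\delta-\delta'$ whenever $\En_{\alpha,\eps}(\msp)>C$, the additive term is absorbed. Since $\delta'<\delta$ one also has $(1+\delta'\log\fL(f))^{\alpha}\le(1+\delta\log\fL(f))^{\alpha}$, and this produces the right-hand inequality of (\ref{EnLipOneStepCor}).

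For the lower bound, the critical observation is that the definition of $\fL$ is symmetric under taking inverses, so $\fL(f^{-1})=\fL(f)$; consequently $f^{-1}\in\Lip(M)$ carries the same quantitative control as $f$ itself. Applying (\ref{EnLipOneStep}) to the pair $(f^{-1},f_*\msp)$ then gives
\[
    \En_{\alpha,\eps}(\msp)=\En_{\alpha,\eps}\bigl((f^{-1})_*f_*\msp\bigr)\le(1+\delta'\log\fL(f))^{\alpha}\bigl(\En_{\alpha,\eps}(f_*\msp)+B\bigr),
\]
and after rearrangement and the same absorption argument (possibly enlarging the threshold $C$) this yields the left-hand inequality of (\ref{EnLipOneStepCor}).

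The main technical obstacle will be the bookkeeping: a single universal threshold $C$ must simultaneously handle both the upper- and lower-bound absorptions, and the two multiplicative losses — one coming from shrinking $\delta$ to $\delta'$ inside (\ref{EnLipOneStep}) and one from absorbing $B$ against $\En_{\alpha,\eps}(\msp)$ — have to combine to no more than the single factor $(1+\delta)$ on each side. This is the routine but slightly delicate computation that is left implicit in the passage from Proposition \ref{prop:EnChangeHol} to Corollary \ref{cor:EnHol}, and no fundamentally new ideas should be needed beyond what is already present in the H\"older argument.
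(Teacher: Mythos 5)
The paper states this corollary (like its H\"older counterpart, Corollary \ref{cor:EnHol}) without proof, and your reconstruction is exactly the intended argument: apply the one-step estimate (\ref{EnLipOneStep}) with a smaller parameter $\delta'$ to get the upper bound, then apply it to the pair $(f^{-1},f_*\msp)$, using that the symmetric definition of $\fL$ gives $\fL(f^{-1})=\fL(f)$, and absorb the additive constant $B$ via the hypothesis $\En_{\alpha,\eps}(\msp)>C$. One caution on the point you yourself flag as "routine": the absorption in the lower-bound direction requires $B$ to be dominated by a fixed fraction of $(1+\delta'\log\fL(f))^{-\alpha}\En_{\alpha,\eps}(\msp)$, so the threshold it produces scales like $B(1+\delta\log\fL(f))^{\alpha}$ and is a priori $f$-dependent; a single $C$ works uniformly only after restricting to $\fL(f)<R$ (which is how the corollary is actually used downstream, e.g.\ in the analogue of Corollary \ref{cor:tEnChangeOneStepHol}). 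This wrinkle is inherited from the paper's own formulation rather than introduced by your argument, but it is the one place where the bookkeeping is not automatic.
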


\begin{corollary}
    There exists $C < \infty$ such that for every $\eps > 0$, every $f \in \Lip(M)$ and every $\msp \in \mM$ the following holds:
    \begin{equation*}
        \En_{\alpha, \eps}(f_* \msp) < \max ( 2 (1 + \log ( \fL(f) ))^{\alpha} \En_{\alpha, \eps}(\msp), C ).
    \end{equation*}
\end{corollary}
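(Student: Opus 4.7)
The plan is to mimic exactly the argument used earlier for Corollary \ref{cor:EnOneStepForm2} in the H\"older setting, replacing Corollary \ref{cor:EnHol} by its Lipschitz counterpart Corollary \ref{cor:EnLip}. The key observation is that the Lipschitz constant $\fL$ is defined symmetrically, so $\fL(f^{-1}) = \fL(f)$, which makes the substitution $\tilde{f} = f^{-1}$ particularly clean.

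First, I would fix $\delta = 1$ and invoke Corollary \ref{cor:EnLip} with this value of $\delta$ to obtain a constant $C$ such that inequality \eqref{EnLipOneStepCor} holds for all $\eps > 0$, all $g \in \Lip(M)$, and all $\msp' \in \mM$ with $\En_{\alpha, \eps}(\msp') > C$. Then I would apply it to the measure $\tilde\msp := f_* \msp$ and to the homeomorphism $g := f^{-1}$, conditional on having $\En_{\alpha, \eps}(f_* \msp) > C$. Since $f^{-1}_* f_* \msp = \msp$ and $\fL(f^{-1}) = \fL(f)$, the left half of~\eqref{EnLipOneStepCor} becomes
\begin{equation*}
    \frac{1}{2}(1 + \log(\fL(f)))^{-\alpha} < \frac{\En_{\alpha, \eps}(\msp)}{\En_{\alpha, \eps}(f_* \msp)},
\end{equation*}
which rearranges to the desired
\begin{equation*}
    \En_{\alpha, \eps}(f_* \msp) < 2 (1 + \log(\fL(f)))^{\alpha} \En_{\alpha, \eps}(\msp).
\end{equation*}

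Finally, in the complementary regime $\En_{\alpha, \eps}(f_* \msp) \le C$, the inequality in the corollary is trivially guaranteed by the $\max$ on the right-hand side. Combining the two cases with the same constant $C$ as above yields the stated bound uniformly in $\eps$, $f$, and $\msp$.

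No step here is really an obstacle; the entire content has been transported from the H\"older proof. The only point to double-check is that Corollary \ref{cor:EnLip} is indeed applicable to $f^{-1}$ (which requires $f^{-1} \in \Lip(M)$, and this is automatic since $\Lip(M)$ consists of bi-Lipschitz homeomorphisms) and that $\fL(f^{-1}) = \fL(f)$, which is immediate from the symmetric definition of $\fL$.
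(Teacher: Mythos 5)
Your argument is correct and is exactly the approach the paper takes (the Lipschitz corollary is the verbatim analogue of Corollary \ref{cor:EnOneStepForm2}, proved by applying the one-step two-sided estimate to $\tilde\msp = f_*\msp$ and $f^{-1}$). The observation that $\fL(f^{-1}) = \fL(f)$ by the symmetry of the definition is the right justification, and the choice $\delta = 1$ produces precisely the factor $2(1+\log(\fL(f)))^{\alpha}$ in the statement.
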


Notice, that Proposition \ref{prop:EnEquiv} does not depend on the regularity of $f$, hence it remains unchanged. That, once again, allows us to establish an estimate for $\tEn_{\alpha, \eps} (f_* \msp)$ (analog of Corollary \ref{cor:tEnChangeOneStepHol}):

\begin{corollary}
    For every $\delta > 0$ and every $R < \infty$ there exists $C > 0$ such that for every $f \in \Lip(M)$ with $\fL(f) < R$, every $\eps > 0$ and every $\msp$ such that $\tEn_{\alpha, \eps}(\msp) > C$ or $\En_{\alpha, \eps}(\msp) > C$ one has
    \begin{equation*}
        \frac{\tEn_{\alpha, \eps} (f_*\msp)}{\tEn_{\alpha, \eps} (\msp)} \in (1-\delta, 1+\delta).
    \end{equation*}
\end{corollary}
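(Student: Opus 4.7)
The plan is to mirror the argument for Corollary \ref{cor:tEnChangeOneStepHol} in the H\"older case, replacing the input Corollary \ref{cor:EnHol} with its Lipschitz analog Corollary \ref{cor:EnLip}. The key observation is that Proposition \ref{prop:EnEquiv} (comparability of $\En_{\alpha,\eps}$ and $\tEn_{\alpha,\eps}$ on high-energy measures) makes no regularity assumption on $f$, so it applies verbatim.

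First, given $\delta > 0$ and $R < \infty$, I would choose an auxiliary parameter $\delta_1 > 0$ small enough that
\begin{equation*}
    (1+\delta_1)(1+\delta_1 \log R)^{\alpha} < 1+\tfrac{\delta}{3}, \qquad \frac{1}{(1+\delta_1)(1+\delta_1\log R)^{\alpha}} > 1-\tfrac{\delta}{3};
\end{equation*}
such a $\delta_1$ exists because the left-hand sides tend to $1$ as $\delta_1 \to 0$. Corollary \ref{cor:EnLip} applied with $\delta_1$ then supplies a constant $C_1$ such that whenever $\En_{\alpha,\eps}(\msp) > C_1$ and $\fL(f) < R$ one has
\begin{equation*}
    \frac{\En_{\alpha, \eps}(f_* \msp)}{\En_{\alpha, \eps}(\msp)} \in \bigl(1-\tfrac{\delta}{3},\, 1+\tfrac{\delta}{3}\bigr).
\end{equation*}

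Next I would apply Proposition \ref{prop:EnEquiv} with some $\delta_2 < \delta/10$ (to be chosen), obtaining a constant $C_2$ so that $\En_{\alpha,\eps}(\msp') \approx_{(\delta_2, C_2)} \tEn_{\alpha,\eps}(\msp')$ for every probability measure $\msp'$ and every $\eps > 0$. The strategy is then to choose $C$ so large that the assumption $\max(\En_{\alpha,\eps}(\msp), \tEn_{\alpha,\eps}(\msp)) > C$ forces both $\En_{\alpha,\eps}(\msp)$ and $\En_{\alpha,\eps}(f_* \msp)$ to exceed $C_1$ (the second follows from the first by the lower bound in Corollary \ref{cor:EnLip} together with the bound $\fL(f)<R$, after enlarging $C$ if necessary). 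Once both energies are this large, applying the $(\delta_2, C_2)$-closeness to $\msp$ and to $f_*\msp$ converts the $\En$-ratio bound into a $\tEn$-ratio bound: the additive error $C_2$ is absorbed into the multiplicative $(1\pm\delta_2)$ factor by taking $C$ large compared to $C_2$. Composing the three multiplicative errors of order $(1+\delta/3)$, $(1+\delta_2)^2$ yields the desired bound in $(1-\delta, 1+\delta)$ after choosing $\delta_2$ small enough.

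The only mildly delicate point — and the one I expect to require the most care — is the bootstrap from ``$\tEn_{\alpha,\eps}(\msp) > C$ or $\En_{\alpha,\eps}(\msp) > C$'' to simultaneously high energy of $\msp$ and of $f_*\msp$ in \emph{both} $\En$ and $\tEn$. This is handled by first using Proposition \ref{prop:EnEquiv} to convert between the two energies on $\msp$, then using the uniform lower bound on $\En_{\alpha,\eps}(f_*\msp)$ coming from $\fL(f) < R$ via Corollary \ref{cor:EnLip}, and finally converting back to $\tEn_{\alpha,\eps}(f_*\msp)$. Everything else is essentially arithmetic with the approximations, exactly as in the proof of Corollary \ref{cor:tEnChangeOneStepHol}.
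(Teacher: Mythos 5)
Your proposal is correct and follows essentially the same route as the paper, which obtains this corollary exactly by combining Proposition \ref{prop:EnEquiv} (which, as you note, is regularity-independent) with Corollary \ref{cor:EnLip}, just as the H\"older version is obtained from Proposition \ref{prop:EnEquiv} and Corollary \ref{cor:EnHol}. Your handling of the bootstrap from ``one energy is large'' to ``both energies of $\msp$ and $f_*\msp$ are large'' and the absorption of the additive constants by enlarging $C$ is exactly the intended argument.
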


Next on the list is the analog of Proposition \ref{prop:WassEst}, which is formulated bellow.

\begin{proposition}
    For any $\delta > 0$, any $R < \infty$ and any $\alpha \in (0, 1]$ there exists $C > 0$ such that for any $\eps > 0$, any $f \in \Lip(M)$, such that $\fL(f) < R$ and any $\msp \in \mM$ such that $\En_{\alpha, \eps} (\msp) > C$ or $\tEn_{\alpha, \eps} (\msp) > C$ one has
    \begin{equation*}
        W(f_* \theta_{\alpha, \eps} [\msp], \theta_{\alpha, \eps} [f_* \msp]) < \delta.
    \end{equation*}
\end{proposition}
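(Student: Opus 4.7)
The plan is to mirror exactly the structure used to prove Proposition~\ref{prop:WassEst} in the H\"older case, replacing the key regularity estimate by one tailored to the Lipschitz setting. I would introduce the same auxiliary measure
\[
    \dm_{\alpha,\eps}(\msp) = \varphi_{\alpha,\eps}(d(x,y)) \varphi_{\alpha,\eps}(d(z,y)) \, \dd \msp(x) \, \dd \Leb(y) \, \dd \msp(z)
\]
on $M^3$, together with its projections $\mTt_{\alpha,\eps}(\msp)$, $\mTf_{\alpha,\eps}(\msp)$ and their normalizations $\mTtn_{\alpha,\eps}(\msp)$, $\mTfn_{\alpha,\eps}(\msp)$, exactly as in the H\"older proof. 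Then I would split the target Wasserstein distance by the triangle inequality:
\begin{multline*}
    W(f_* \mTn_{\alpha,\eps}(\msp), \mTn_{\alpha,\eps}(f_* \msp)) \le W(f_* \mTn_{\alpha,\eps}(\msp), f_* \mTfn_{\alpha,\eps}(\msp)) + \\
    + W(f_* \mTfn_{\alpha,\eps}(\msp), \mTfn_{\alpha,\eps}(f_* \msp)) + W(\mTfn_{\alpha,\eps}(f_* \msp), \mTn_{\alpha,\eps}(f_* \msp))
\end{multline*}
and bound each summand separately. The outer two summands are controlled directly by Lemma~\ref{l:W-close}, which does not see the regularity of $f$ at all; the only role of $\fL(f) < R$ here is to guarantee that applying $f_*$ inflates the Wasserstein distance by at most a factor of $R$ and that $f_*\msp$ retains high energy whenever $\msp$ does, both facts being immediate consequences of the Lipschitz analogs of the earlier results.

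The heart of the proof is the middle summand, i.e.\ the Lipschitz analog of Lemma~\ref{l:TV-prime}. As in that lemma, the task reduces to showing that the interaction potentials $K_{\alpha,\eps}(x,z)$ and $K_{\alpha,\eps}(f(x),f(z))$ are $(\delta',C')$-close for arbitrarily prescribed $\delta'$, and via Lemma~\ref{l:K-U-comp} this in turn reduces to comparing $U_{\alpha,\eps}(d(x,z))$ with $U_{\alpha,\eps}(d(f(x),f(z)))$. Here is where the Lipschitz assumption pays off: from $\fL(f) < R$ we get the two-sided bound $R^{-1} d(x,z) \le d(f(x),f(z)) \le R\, d(x,z)$, so letting $r_1 \le r_2$ be the smaller and larger of $d(x,z), d(f(x),f(z))$, one has $r_2/r_1 \le R$. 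Consequently
\[
    \frac{\log(r_1)}{\log(r_2)} = 1 + \frac{\log(r_1/r_2)}{\log(r_2)} \longrightarrow 1 \quad \text{as } r_2 \to 0,
\]
uniformly once $R$ is fixed. Combined with Part~\textbf{(IV)} of Proposition~\ref{prop:UAEps}, this yields $U_{\alpha,\eps}(r_1) \le (1+\delta') U_{\alpha,\eps}(r_2)$ whenever $r_1, r_2 \le r_0$, for an $r_0 = r_0(R,\delta')$ that depends on $R$ and $\delta'$ but, crucially, not on $\alpha$. Outside the small-distance region both distances are bounded below by $r_0/R$, producing a uniform additive constant $C'$. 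In contrast to the H\"older case, no smallness condition on $\alpha$ enters, which is what makes the Lipschitz argument genuinely cleaner.

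From this $K$-comparison onward the remainder duplicates the H\"older proof verbatim: on the set $\{K_{\alpha,\eps}(x,z) > C_2\}$ with $C_2$ chosen large enough to swallow the additive error $C'$, the Radon--Nikodym derivatives of $\mTtn_{\alpha,\eps}(\msp)$ and $f^{-1}_* \mTtn_{\alpha,\eps}(f_*\msp)$ with respect to $\msp\times \msp$ lie in a narrow interval around $1$, after controlling the normalization constants via the Lipschitz version of Corollary~\ref{cor:tEnChangeOneStepHol}; the complementary $\mTtn_{\alpha,\eps}(\msp)$-mass is at most $C_2/\tEn_{\alpha,\eps}(\msp)$ and is absorbed by requiring $\tEn_{\alpha,\eps}(\msp) > C$ (or $\En_{\alpha,\eps}(\msp) > C$, via Proposition~\ref{prop:EnEquiv}). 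A final application of Lemma~\ref{lm:WasEst} converts the total-variation bound into the desired Wasserstein bound. The one place I would track carefully is the order of the choices of constants: first $\delta'$, then $r_0(\delta', R)$, then $C_2$ dominating $C'$, and finally $C$ large enough to simultaneously exceed the energy thresholds in Lemma~\ref{l:W-close} and in the Lipschitz analog of Corollary~\ref{cor:tEnChangeOneStepHol}.
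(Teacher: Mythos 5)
Your proposal is correct and takes essentially the same route as the paper, which itself only remarks that the proof ``repeats the one of Proposition~\ref{prop:WassEst} up to minor computational details.'' The one detail you work out explicitly --- that $\fL(f)<R$ gives $r_2/r_1\le R$, so $\log(r_1)/\log(r_2)\to 1$ uniformly as $r_2\to 0$ and no smallness condition on $\alpha$ is needed --- is exactly the computational change the paper alludes to, and your bookkeeping of the order of constants is sound.
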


Once again, the proof repeats the one of Proposition \ref{prop:WassEst} up to minor computational details. Finally, the uniform estimate (\ref{RegCondLip}) allows us to establish the following analog of Proposition \ref{lm:tailsEst}:

\begin{proposition}
    If a probability measure $\mgr$ on $\Lip(M)$ satisfies assumption (\ref{RegCondLip}) then for every $\delta > 0$ there exists $C > 0$ such that for every $\eps > 0$ and every measure $\msp \in \mM$ such that $\En_{\alpha, \eps} (\msp) > C$ the following formula holds:
    \begin{equation*}
        \frac{\E_{\mgr} \left[ \tEn_{\alpha, \eps} (f_* \msp) \right]}{\tEn_{\alpha, \eps} (\msp)} \in (1 - \delta, 1 + \delta).
    \end{equation*}
\end{proposition}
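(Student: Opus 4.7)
The plan is to copy the structure of the H\"older-case proof of Proposition \ref{lm:tailsEst}, with one substantive change. In that proof, the integral $\int \bigl(\tfrac{2(1+\log L(f))}{\gamma(f)}\bigr)^{\alpha}\,d\mgr(f)$ was forced close to $1$ by shrinking $\alpha$ and applying H\"older's inequality against hypothesis (\ref{RegCondHol}). Here $\alpha$ is fixed by the hypothesis, but we have a second small parameter~$\tdelta$ at our disposal (coming from Corollary \ref{cor:EnLip}), so we instead control the relevant integrals by shrinking $\tdelta$.

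Fix $\delta > 0$ and choose an auxiliary $\tdelta > 0$ to be specified. Corollary \ref{cor:EnLip} applied with parameter $\tdelta$ yields $C_1 < \infty$ such that for every $\eps > 0$, every $f \in \Lip(M)$, and every $\msp \in \mM$ with $\En_{\alpha,\eps}(\msp) > C_1$,
\begin{equation*}
    \frac{1}{1+\tdelta}\bigl(1+\tdelta\log\fL(f)\bigr)^{-\alpha} < \frac{\En_{\alpha,\eps}(f_*\msp)}{\En_{\alpha,\eps}(\msp)} < (1+\tdelta)\bigl(1+\tdelta\log\fL(f)\bigr)^{\alpha}.
\end{equation*}
Integrating this two-sided inequality against $\mgr$ reduces the task to proving
\begin{equation*}
    \int_{\Lip(M)} \bigl(1+\tdelta\log\fL(f)\bigr)^{\pm\alpha}\,d\mgr(f) \longrightarrow 1 \quad \text{as } \tdelta \to 0.
\end{equation*}
This is the one place where hypothesis (\ref{RegCondLip}) genuinely enters. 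For $\mgr$-a.e.~$f$ (those with $\fL(f) < \infty$) both integrands converge pointwise to~$1$. For $\tdelta \le 1$ the integrand $\bigl(1+\tdelta\log\fL(f)\bigr)^{-\alpha}$ is bounded by $1$, while $\bigl(1+\tdelta\log\fL(f)\bigr)^{\alpha}$ is bounded by $\bigl(1+\log\fL(f)\bigr)^{\alpha} \le C(\alpha)\bigl(1 + (\log\fL(f))^{\alpha}\bigr)$, which is $\mgr$-integrable by (\ref{RegCondLip}). Dominated convergence yields the claim.

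Choosing $\tdelta$ sufficiently small we obtain $\E_{\mgr}[\En_{\alpha,\eps}(f_*\msp)]/\En_{\alpha,\eps}(\msp) \in (1-\delta/2,\,1+\delta/2)$ whenever $\En_{\alpha,\eps}(\msp) > C_1$. The final step is the passage from $\En$ to $\tEn$, and it is identical to the corresponding portion of the proof of Proposition~\ref{lm:tailsEst}: apply Proposition~\ref{prop:EnEquiv} (equivalently Remark~\ref{r:EnEqForm}) with a sufficiently small parameter to express $\tEn_{\alpha,\eps}(\msp) \approx_{(\cdot,\cdot)} \En_{\alpha,\eps}(\msp)$ and $\tEn_{\alpha,\eps}(f_*\msp) \approx_{(\cdot,\cdot)} \En_{\alpha,\eps}(f_*\msp)$, take the expectation, and absorb the additive constants into the high-energy hypothesis by enlarging~$C$. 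I do not anticipate a serious obstacle here; the only genuinely new ingredient is the dominated-convergence argument above, which is exactly the Lipschitz counterpart of the H\"older-inequality manipulation in the proof of Proposition~\ref{lm:tailsEst}.
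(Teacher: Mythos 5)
Your proposal is correct and carries out exactly the argument the paper intends (the paper omits the proof of this Lipschitz analogue, pointing to Proposition~\ref{lm:tailsEst} and assumption~(\ref{RegCondLip})): you integrate the two-sided one-step estimate of Corollary~\ref{cor:EnLip}, drive the resulting integrals $\int (1+\tdelta\log\fL(f))^{\pm\alpha}\,d\mgr(f)$ to $1$ by shrinking the auxiliary parameter $\tdelta$ rather than $\alpha$, and then pass from $\En$ to $\tEn$ via Proposition~\ref{prop:EnEquiv} exactly as in the H\"older case. The dominated-convergence step is sound, since $\fL(f)\ge 1$ makes the integrand monotone in $\tdelta$ and dominated by $(1+\log\fL(f))^{\alpha}\le C(\alpha)\bigl(1+(\log\fL(f))^{\alpha}\bigr)$, which is integrable by~(\ref{RegCondLip}).
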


Utilizing these modified statements we are able to repeat the proof of the contracting property for $\tEn_{\alpha, \eps}$ (analog of Proposition \ref{prop:contr}, but with a fixed $\alpha$ taken from assumption \eqref{AssumptionLip}):

\begin{proposition} \label{prop:LipContr}
    Under the assumptions of Theorem \ref{thm:mainLip-1} there exists $\lambda < 1$ and $C < \infty$, such that for every $\mgr \in K$, every $\eps > 0$ and every $\msp \in \mM$ if $\En_{\alpha, \eps} (\msp) > C$ then
    \begin{equation*}
        \tEn_{\alpha, \eps} (\mgr * \msp) < \lambda \tEn_{\alpha, \eps} (\msp).
    \end{equation*}
\end{proposition}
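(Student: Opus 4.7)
The plan is to follow the strategy of Proposition \ref{prop:contr} essentially verbatim, arguing by contradiction and exploiting the Hilbert-space variance identity (Lemma \ref{l:var}) in $L^2(M,\Leb)$. Assuming the conclusion fails, for every $\lambda < 1$ and $C < \infty$ one can find $\mgr \in K$ and $\msp$ with $\tEn_{\alpha,\eps}(\msp) > C$ for which $\tEn_{\alpha,\eps}(\mgr * \msp) \ge \lambda \tEn_{\alpha,\eps}(\msp)$. Parameterizing $\lambda = 1 - \delta$ with $\delta$ arbitrarily small, the goal is to produce in the limit a measure whose image under a $\mgr$-random homeomorphism is almost surely deterministic, contradicting the \textbf{no deterministic image} hypothesis.

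The first step is to apply Lemma \ref{l:var} to $v = \dens_{\alpha,\eps}[f_*\msp] \in L^2(M,\Leb)$, using that $\dens_{\alpha,\eps}[\mgr*\msp] = \E_\mgr \dens_{\alpha,\eps}[f_*\msp]$. This gives
\begin{equation*}
    \E_\mgr \int_M \bigl( \dens_{\alpha,\eps}[f_*\msp](y) - \dens_{\alpha,\eps}[\mgr*\msp](y) \bigr)^2 \dd \Leb(y)
    = \E_\mgr \tEn_{\alpha,\eps}(f_*\msp) - \tEn_{\alpha,\eps}(\mgr*\msp).
\end{equation*}
The Lipschitz analog of Proposition \ref{lm:tailsEst} (which immediately follows from~\eqref{RegCondLip} and Corollary \ref{cor:EnLip} in the Lipschitz case) provides $\E_\mgr \tEn_{\alpha,\eps}(f_*\msp) \le (1+\delta)\tEn_{\alpha,\eps}(\msp)$ whenever $\tEn_{\alpha,\eps}(\msp)$ is sufficiently large. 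Combined with the contradiction hypothesis this yields the key $L^2$ bound
\begin{equation*}
    \E_\mgr \int_M \bigl( \dens_{\alpha,\eps}[f_*\msp](y) - \dens_{\alpha,\eps}[\mgr*\msp](y) \bigr)^2 \dd \Leb(y)
    < 2\delta \, \tEn_{\alpha,\eps}(\msp).
\end{equation*}

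Repeating the Markov-inequality argument of Lemma \ref{l.West} with $g_f = \dens_{\alpha,\eps}[f_*\msp]$ and $\bg = \dens_{\alpha,\eps}[\mgr*\msp]$, one concludes that with $\mgr$-probability at least $1 - 4\sqrt\delta$ the normalized measures satisfy $\TV(\mTn_{\alpha,\eps}(f_*\msp), \mTn_{\alpha,\eps}(\mgr*\msp)) \le 10 \sqrt[8]{\delta}$ and hence $W(\mTn_{\alpha,\eps}(f_*\msp), \mTn_{\alpha,\eps}(\mgr*\msp)) \le 10 \diam(M)\sqrt[8]{\delta}$. Simultaneously, the moment bound~\eqref{RegCondLip}, being uniform over $K$, permits choosing $R < \infty$ with $\p_\mgr[\fL(f) < R] > 1 - \sqrt[4]\delta$ for every $\mgr \in K$; on this event the Lipschitz Wasserstein invariance statement gives $W(f_*\mTn_{\alpha,\eps}(\msp), \mTn_{\alpha,\eps}(f_*\msp)) < \delta$. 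The triangle inequality then produces, with $\mgr$-probability $\ge 1 - 5\sqrt[4]\delta$,
\begin{equation*}
    W\bigl(f_*\mTn_{\alpha,\eps}(\msp),\, \mTn_{\alpha,\eps}(\mgr*\msp)\bigr) < 10\diam(M)\sqrt[8]\delta + \delta.
\end{equation*}

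Finally, I would take a sequence $\delta_n \downarrow 0$ with witnessing data $\mgr_n \in K$, $\msp_n$, and $\eps_n$. Using compactness of $K$ (weak-$*$) and of $\mM$, I extract a subsequence along which $\mgr_{n_k} \to \mgr \in K$, $\mTn_{\alpha,\eps_{n_k}}(\msp_{n_k}) \to m$, and $\mTn_{\alpha,\eps_{n_k}}(\mgr_{n_k}*\msp_{n_k}) \to \tilde m$. The displayed estimate above then forces $f_* m = \tilde m$ for $\mgr$-almost every $f$, contradicting \textbf{no deterministic image}. The main technical obstacle is the last step: the push-forward map $f \mapsto f_* m$ is only continuous when restricted to homeomorphisms with controlled $\fL(f)$, and the measures $\mgr_{n_k}$ are varying, so one must carefully invoke the uniform tail estimate on $\fL$ provided by the compactness of $K$ and~\eqref{RegCondLip} to pass the a.s.\ identity to the limit $\mgr$.
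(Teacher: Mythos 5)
Your proposal is correct and follows essentially the same route as the paper, which for this Lipschitz analog simply states that one repeats the proof of Proposition \ref{prop:contr} using the modified one-step estimates (the Lipschitz versions of Corollary \ref{cor:EnHol}, Proposition \ref{prop:WassEst}, and Proposition \ref{lm:tailsEst}); your adaptation — variance identity in $L^2(M,\Leb)$, the Markov/total-variation argument of Lemma \ref{l.West}, the uniform tail bound on $\fL(f)$ over $K$, and the compactness extraction leading to a contradiction with \textbf{no deterministic image} — is exactly the intended argument, with the only structural difference (correctly reflected in your write-up) being that $\alpha$ is fixed here rather than chosen small.
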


Finally, we can formulate a similar estimate for all measures and not just those with high energy. This statement will also be used in Appendix \ref{appendix:B} to prove a technical generalization of Theorem \ref{thm:mainLip-3}, which was mentioned at the end of Section \ref{subsection:applications}. 
\begin{corollary} \label{Lip:EnUniv}
    In the assumptions of Proposition~\ref{prop:LipContr} one can conclude that there exist $\tC < \infty, \lambda < 1$, such that for every $\mgr \in K$, every $\eps > 0$ and every measure $\msp$ on $M$
	\begin{equation*} 
		\En_{\alpha, \eps} (\mgr * \msp) < \max(\lambda \En_{\alpha, \eps} (\msp),\tC).
	\end{equation*}
\end{corollary}

From here Theorem \ref{thm:mainLip-3} (and hence Theorem \ref{thm:mainLip-1}) follows.

\appendix
\section{Technical lemmata} \label{appendix}

In this section we establish some properties of functions $\varphi_{\alpha}$ and $\varphi_{\alpha, \eps}$ and then prove Proposition \ref{prop:UAEps}.

A straightforward computation allows us to prove the following
\begin{lemma} \label{lm:partInt}
    For any $0 < r < \frac{1}{e}$
    \begin{equation*}
        \int_{\bx \in \R^k, r < |\bx| < \frac{1}{e}} \varphi_{\alpha} (|\bx|)^2  \dd \bx = c_{\alpha, k} ((-\log(r))^{\alpha} - 1),
    \end{equation*}
    where $c_{\alpha, k} = \frac{k \omega_k}{\alpha}$, and $\omega_k$ is the volume of a unit ball in $\R^k$.
\end{lemma}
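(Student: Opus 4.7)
The plan is to reduce the integral to a one-dimensional integral by spherical symmetry and then evaluate it by an elementary substitution, so there is no real obstacle — the statement is essentially a computation fixing the value of $c_{\alpha,k}$.

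First I would pass to polar coordinates in $\R^k$. Since the integrand depends only on $|\bx|$, and since the surface area of the unit sphere $S^{k-1}$ equals $k \omega_k$, I can rewrite
\begin{equation*}
\int_{r < |\bx| < 1/e} \varphi_\alpha(|\bx|)^2 \, \dd \bx
= k \omega_k \int_r^{1/e} \frac{(-\log \rho)^{\alpha - 1}}{\rho^k} \cdot \rho^{k - 1} \, \dd \rho
= k \omega_k \int_r^{1/e} \frac{(-\log \rho)^{\alpha - 1}}{\rho} \, \dd \rho.
\end{equation*}

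Next I would make the substitution $u = -\log \rho$, so that $\dd u = -\dd \rho / \rho$. The limits $\rho = r$ and $\rho = 1/e$ transform to $u = -\log r$ and $u = 1$ respectively, and the integrand becomes simply $u^{\alpha - 1}$. Thus
\begin{equation*}
k \omega_k \int_r^{1/e} \frac{(-\log \rho)^{\alpha - 1}}{\rho} \, \dd \rho
= k \omega_k \int_1^{-\log r} u^{\alpha - 1} \, \dd u
= \frac{k \omega_k}{\alpha} \bigl( (-\log r)^\alpha - 1 \bigr).
\end{equation*}
Recalling the definition $c_{\alpha, k} = k \omega_k / \alpha$ yields the asserted identity. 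The condition $r < 1/e$ is exactly what guarantees that $-\log r > 1$, so the antiderivative is evaluated over a non-degenerate interval on which the integrand is positive.
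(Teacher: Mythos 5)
Your computation is correct and matches the paper's proof: both pass to spherical coordinates, use that the angular integral contributes the surface area $k\omega_k$ of the unit sphere, and evaluate the resulting radial integral via the substitution $u = -\log\rho$ (which the paper does implicitly). No issues.
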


\begin{proof}
    Since the function $\varphi_{\alpha} (|\bx|)$ depends only on the radius $|\bx|$ we conclude that its integral over a sphere of radius $y$ is equal to
    \begin{equation*}
        \int_{\bx \in \R^k, |\bx| = y} \varphi_{\alpha} (|\bx|)^2 \dd S(\bx) = k \omega_k y^{k - 1} \varphi_{\alpha} (y)^2
    \end{equation*}
    ($\omega_k$ is the volume of a unit ball in $\R^k$, hence $k \omega_k y^{k - 1}$ is the $(k - 1)$-dimensional volume of a sphere of radius $y$). Integrating over the radius $y$ we get:
    \begin{multline*}
        \int_{\bx \in \R^k, r < |\bx| < \frac{1}{e}} \varphi_{\alpha} (|\bx|)^2  \dd \bx = \int_{r}^{1/e} k \omega_k y^{k - 1} \frac{|\log(y)|^{\alpha - 1}}{y^k} \dd y = \\ 
        = k \omega_k \int_{r}^{1/e} \frac{(-\log(y))^{\alpha - 1}}{y} \dd y = \frac{k \omega_k}{\alpha} (|\log(r)|^{\alpha} - 1).
    \end{multline*}
\end{proof}

Let us define a function 
\begin{equation*}
    V_{\alpha, \eps} (r) = \begin{cases}
        c_{\alpha, k} |\log(r)|^{\alpha}, \quad \text{for $\eps < r < 1/e$;} \\
        c_{\alpha, k} |\log(\eps)|^{\alpha}, \quad \text{for $r \le \eps$.}
    \end{cases}
\end{equation*}
Our next goal is to prove
\begin{proposition} \label{prop:singEst}
    For every $\alpha_0 > 0$ and every $\delta > 0$ there exists $\eps_0 > 0$ and $r_0 > 0$, such that for every $0 < \alpha < \alpha_0$, for every $0 < \eps < \eps_0$, and every $0 \le r < r_0$ we have
    \begin{equation*}
        \frac{1}{1 + \delta} < \frac{U_{\alpha, \eps} (r)}{V_{\alpha, \eps} (r)} < 1 + \delta,
    \end{equation*}
    where the function $U_{\alpha, \eps} (r)$ is defined by equation \eqref{def:UAE}.
\end{proposition}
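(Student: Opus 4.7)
The plan is to split the convolution integral defining $U_{\alpha,\eps}(r)$ into a ``bulk'' region, where the factors $\varphi_{\alpha,\eps}(|\bx|)$ and $\varphi_{\alpha,\eps}(|\br-\bx|)$ are nearly equal, and a complementary ``near-singularity'' region. Set $s := \max(r,\eps)$, so that $V_{\alpha,\eps}(r) = c_{\alpha,k}(-\log s)^{\alpha}$ in both regimes of the definition; fix a parameter $\gamma \in (0,1)$ close to $1$ (to be determined in terms of $\alpha_0$ and $\delta$) and set $t := s^{\gamma}$. Define $G := \{ |\bx| \geq t,\; |\br-\bx| \geq t \}$, and let $F_1 := \{ |\bx| < t \}$, $F_2 := \{ |\br-\bx| < t \}$. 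The key preliminary observation is that on $G$ one has $|\bx|, |\br-\bx| \geq t \geq \eps^{\gamma} \geq \eps$ (using $\gamma < 1$ and $\eps < 1$), so $\varphi_{\alpha,\eps}$ coincides with $\varphi_\alpha$ there; moreover $r/|\bx| \leq s^{1-\gamma}$ together with the explicit formula for $\varphi_\alpha$ and the bound $|\log|\bx|| \geq 1$ yields $\varphi_\alpha(|\br-\bx|)/\varphi_\alpha(|\bx|) = 1 + O(s^{1-\gamma})$, uniformly in $\bx \in G$ and $\alpha \in (0,\alpha_0)$.

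For the lower bound, I restrict to $G$: using the comparison above and the inclusion $\{|\bx| \geq t+r\} \cap \{|\bx| < 1/e\} \subset G$, Lemma~\ref{lm:partInt} gives
\[
\int_G \varphi_{\alpha,\eps}(|\bx|)\,\varphi_{\alpha,\eps}(|\br-\bx|)\, \dd\bx \;\geq\; (1-o(1))\, c_{\alpha,k}\bigl(\gamma^{\alpha}(-\log s)^{\alpha} - 1\bigr),
\]
which exceeds $V_{\alpha,\eps}(r)/(1+\delta)$ once $\gamma$ is chosen with $\gamma^{\alpha_0} \geq 1/(1+\delta/2)$ (so that $\gamma^{\alpha} \geq \gamma^{\alpha_0}$ for every $\alpha \in (0,\alpha_0)$) and $s$ is small enough to absorb the $o(1)$ and the subtracted $1$.

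For the upper bound I treat $r \leq \eps$ and $r > \eps$ separately. In the first case, Cauchy-Schwarz applied to the convolution yields
\[
U_{\alpha,\eps}(r) \;\leq\; \int \varphi_{\alpha,\eps}(|\bx|)^2 \dd\bx \;=\; \omega_k(-\log\eps)^{\alpha-1} + c_{\alpha,k}\bigl((-\log\eps)^{\alpha} - 1\bigr) \;\leq\; (1+\delta)V_{\alpha,\eps}(r)
\]
for $\eps$ small. In the second case Cauchy-Schwarz only bounds $U$ by $V_{\alpha,\eps}(\eps)$, which exceeds $V_{\alpha,\eps}(r)$ by an unbounded factor when $\eps \ll r$, so instead I use $\varphi_{\alpha,\eps} \leq \varphi_\alpha$ pointwise to pass to $U_{\alpha,\eps}(r) \leq U_{\alpha,0}(r) := \int \varphi_\alpha(|\bx|)\varphi_\alpha(|\br-\bx|) \dd\bx$, which is finite because $y^{k/2-1}(-\log y)^{(\alpha-1)/2}$ is integrable at $0$. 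The bulk contributes $\int_G \leq (1+o(1))\gamma^{\alpha}\, c_{\alpha,k}(-\log r)^{\alpha}$ by the same comparison, and the symmetry $\bx \mapsto \br-\bx$ gives $\int_{F_1 \cup F_2} \leq 2\int_{F_1}$. I split $F_1 = \{|\bx| < Mr\} \cup \{Mr \leq |\bx| < r^{\gamma}\}$ for a large constant $M$: the first piece, after the change of variables $\bx = r\by$, reduces to a bounded integral over $\{|\by| < M\}$ of $|\by|^{-k/2}|\be_1-\by|^{-k/2}$ multiplied by a pre-factor of order $(-\log r)^{\alpha-1}$; the second piece admits the approximation $\varphi_\alpha(|\br-\bx|) = (1+O(1/M))\varphi_\alpha(|\bx|)$ and is bounded by $(1+O(1/M))(1 - \gamma^{\alpha})\, c_{\alpha,k}(-\log r)^{\alpha}$ via Lemma~\ref{lm:partInt}. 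Assembling, $U_{\alpha,0}(r) \leq (2 - \gamma^{\alpha} + o(1))\, c_{\alpha,k}(-\log r)^{\alpha}$, and since $2 - \gamma^{\alpha_0}$ can be made as close to $1$ as desired by choosing $\gamma$ near $1$, the same $\gamma$ fixed for the lower bound already forces this to be $\leq (1+\delta)V_{\alpha,\eps}(r)$.

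The main technical obstacle is the upper bound in the regime $\eps < r$: the bulk and the near-singularity pieces live at the same logarithmic order $(-\log r)^{\alpha}$, so the factor $2$ coming from estimating $\int_{F_1 \cup F_2}$ by $\int_{F_1} + \int_{F_2}$ is unavoidable and must be absorbed through the flexibility of $\gamma$. Uniformity over $\alpha \in (0, \alpha_0)$ throughout reduces to the elementary monotonicity $\gamma^\alpha \geq \gamma^{\alpha_0}$ valid for $\gamma < 1$, which explains why $\gamma$ must be chosen in terms of $\alpha_0$ rather than of $\alpha$.
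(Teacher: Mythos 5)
Your argument is correct and arrives at the right asymptotics, but it takes a genuinely different route from the paper's. You excise balls of radius $t=s^{\gamma}$ around the two singularities $0$ and $\br$; because these balls overlap almost entirely ($r\ll t$), bounding $\int_{F_1\cup F_2}$ by $2\int_{F_1}$ doubles a contribution that is a fixed fraction $1-\gamma^{\alpha}$ of the main term, and you buy this back by pushing $\gamma\to 1$ (uniformly over $\alpha<\alpha_0$ via $\gamma^{\alpha}\ge\gamma^{\alpha_0}$). The paper instead splits by the first coordinate into the half-spaces $\{x_1>2r\}$ and, after the symmetry $\bx\mapsto\br-\bx$, $\{x_1<-r\}$, plus the strip $\{|x_1|\le 2r\}$ (Lemmas \ref{lm:tails} and \ref{lm:middle}): the two half-spaces are disjoint, each carries asymptotically exactly half of $c_{\alpha,k}(-\log r)^{\alpha}$, and the leftover strip is genuinely $o((-\log r)^{\alpha})$, so no factor of $2$ ever appears and no auxiliary parameter is needed. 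Your version costs the extra parameters $\gamma$ and $M$ and somewhat heavier bookkeeping, but it avoids the polar-coordinate estimate of the strip integral that occupies most of Lemma \ref{lm:middle}. One caveat you share with the paper: both lower bounds ultimately rest on $\int_{\rho<|\bx|<1/e}\varphi_{\alpha}^2=c_{\alpha,k}((-\log\rho)^{\alpha}-1)$ (Lemma \ref{lm:partInt}), and the subtracted $1$ is small relative to $(-\log\rho)^{\alpha}$ only when $(-\log\rho)^{\alpha}$ is large, which for $\alpha$ near $0$ forces $\rho$ to be small \emph{depending on} $\alpha$; so your phrase ``$s$ small enough to absorb \dots the subtracted $1$'' is exactly as non-uniform in $\alpha$ as the corresponding step in the paper's Lemma \ref{lm:tails}, and neither argument literally yields a single $r_0$ valid for all $\alpha\in(0,\alpha_0)$ without further comment.
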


\begin{proof}

Given $\alpha_0 > 0$ and $\delta > 0$, take some constant $c > 0$; its exact value will be chosen later (and will depend on $\delta$ and $\alpha_0
$, but not $r$). Now, write the integral \eqref{def:UAE}:
\begin{equation*}
    U_{\alpha, \eps} (r) = \int_{\R^k} \varphi_{\alpha, \eps} (|\bx|) \varphi_{\alpha, \eps} (|\bx - \br|) \dd \bx,
\end{equation*}
splitting the domain of integration into several parts:
\begin{enumerate}[label=(\roman*)]
    \item \label{balls} Balls $B_{r/2} (0)$ and $B_{r/2} (\br)$;
    \item \label{difference} Difference $B_{c r} (0) \setminus (B_{r/2} (0) \cup B_{r/2} (\br))$;
    \item \label{outer} Outer region $\R^k \setminus B_{\frac{1}{2e}} (0)$;
    \item Spherical layer between two spheres $B_{\frac{1}{2e}} (0) \setminus B_{c r} (0)$.
\end{enumerate}
We will show that, for an appropriate choice of the constant $c$, $\eps_0$, and $r_0$ the integrals over all these regions except for the last one do not exceed $\frac{\delta}{10} V_{\alpha, \eps} (r)$, and the integral over the spherical layer will differ from $V_{\alpha, \eps} (r)$ by at most $\frac{\delta}{10} V_{\alpha, \eps} (r)$:
\begin{equation} \label{mainSph}
    1 - \frac{\delta}{10} < \frac{1}{V_{\alpha, \eps} (r)} \int_{c r < |\bx| < \frac{1}{2e}} \varphi_{\alpha, \eps} (|\bx|) \varphi_{\alpha, \eps} (|\bx - \br|) \dd \bx < 1 + \frac{\delta}{10}.
\end{equation}

Adding such upper bounds once they are established would give
\begin{equation*}
    1 - \frac{\delta}{2} < \frac{U_{\alpha, \eps} (r)}{V_{\alpha, \eps} (r)} < 1 + \frac{\delta}{2},
\end{equation*}
so that would conclude the proof.

Let us start by establishing the required inequality for the outer region. If $|\bx| > c r$ then
\begin{equation*}
    1 - \frac{1}{c} < \frac{|\bx - \br|}{|\bx|} < 1 + \frac{1}{c}.
\end{equation*}
Hence, for $c$ big enough we have
\begin{equation*}
    - \frac{2}{c} < \log|\bx - \br| - \log|\bx| < \frac{1}{c},
\end{equation*}
and for any $cr<|\bx| < 1/e$ we get
\begin{equation*}
    1 - \frac{2}{c} < \frac{\log |\bx - \br|}{\log |\bx|} < 1 + \frac{1}{c}.
\end{equation*}
Recalling the definition of $\varphi_{\alpha, \eps}$ (Definition \ref{def:phiae}) we conclude that given $\delta > 0$ and $\alpha_0$ we can pick $r_0 < \frac{1}{2e}$ and $c$ big enough so that
\begin{equation*}
    1 - \frac{\delta}{100} < \frac{\varphi_{\alpha, \eps} (|\bx|) \varphi_{\alpha, \eps} (|\bx - \br|)}{\varphi_{\alpha, \eps} (|\bx|)^2} < 1 + \frac{\delta}{100}
\end{equation*}
for every $\bx$ in spherical layer $B_{\frac{1}{2e}} (0) \setminus B_{c r} (0)$. Integrating this inequality we get
\begin{equation} \label{sphIntEq}
    1 - \frac{\delta}{100} < \frac{\int_{cr < |\bx| < \frac{1}{2e}} \varphi_{\alpha, \eps} (|\bx|) \varphi_{\alpha, \eps} (|\bx - \br|) \dd  \bx}{\int_{cr < |\bx| < \frac{1}{2e}} \varphi_{\alpha, \eps} (|\bx|)^2 \dd  \bx} < 1 + \frac{\delta}{100}.
\end{equation}
Using Lemma \ref{lm:partInt} it's not too hard to establish that given $\alpha_0 > 0$, $\delta > 0$, and $c$ we can choose $r_0 > 0$ and $\eps_0 > 0$ so that for every $0 < \alpha < \alpha_0$, every $0 < \eps < \eps_0$, and $0 < r < r_0$ we have 
\begin{equation} \label{VEquiv}
    1 - \frac{\delta}{100} < \frac{\int_{cr < |\bx| < \frac{1}{2e}} \varphi_{\alpha, \eps} (|\bx|)^2 \dd \bx}{V_{\alpha, \eps} (r)} < 1 + \frac{\delta}{100}.
\end{equation}
Namely, to prove the inequality above one needs to treat cases $r \ge \eps$ and $r < \eps$ separately. If $r \ge \eps$ then by Lemma \ref{lm:partInt} we have
\begin{multline*}
    V_{\alpha, \eps} (r) = \int_{r < |\bx| < 1/e} \varphi_{\alpha, \eps} (|\bx|)^2 \dd \bx + c_{\alpha, k} = \\
    = \int_{r < |\bx| < cr} \varphi_{\alpha, \eps} (|\bx|)^2 \dd \bx + \int_{cr < |\bx| < \frac{1}{2e}} \varphi_{\alpha, \eps} (|\bx|)^2 \dd \bx + \int_{\frac{1}{2e} < |\bx| < \frac{1}{e}} \varphi_{\alpha, \eps} (|\bx|)^2 \dd \bx + c_{\alpha, k} = \\
    = c_{\alpha, k} ((- \log(r))^{\alpha} - (- \log(cr))^{\alpha}) + c_{\alpha, k} (1 + \log(2))^{\alpha} + \int_{cr < |\bx| < \frac{1}{2e}} \varphi_{\alpha, \eps} (|\bx|)^2 \dd \bx.
\end{multline*}
Now we notice that all summands except for the last one are small compared to $V_{\alpha, \eps}(r) = c_{\alpha, k} (- \log(r))^{\alpha}$ for $r$ close to zero, i. e.
\begin{equation*}
    \frac{c_{\alpha, k} ((- \log(r))^{\alpha} - (- \log(cr))^{\alpha}) + c_{\alpha, k} (1 + \log(2))^{\alpha}}{c_{\alpha, k} (- \log(r))^{\alpha}} \to_{r \to 0} 0,
\end{equation*}
and the convergence is uniform in $\alpha \in (0, \alpha_0)$. Hence, by taking small enough $r_0$ we guarantee \eqref{VEquiv}. If $r < \eps$ we can represent $V_{\alpha, \eps} (r)$ in the following way:
\begin{equation*}
    V_{\alpha, \eps} (r) = V_{\alpha, \eps} (\eps) = \int_{\eps < |\bx| < \frac{1}{e}} \varphi_{\alpha, \eps} (|\bx|)^2 \dd \bx + c_{\alpha, k}.
\end{equation*}
The domain of that integral intersects with $(cr, \frac{1}{2e})$ by at least $(c \eps, \frac{1}{2e})$ (since $c > 1$ and $r < \eps$). Hence the following estimate holds:
\begin{multline*}
    \left| \int_{cr < |\bx| < \frac{1}{2e}} \varphi_{\alpha, \eps} (|\bx|)^2 \dd \bx - V_{\alpha, \eps} (r) \right| \le \\
    \le \int_{0 < |\bx| < \eps} \varphi_{\alpha} (\eps)^2 \dd \bx + \int_{\eps < |\bx| < c \eps} \varphi_{\alpha} (|\bx|)^2 \dd \bx + \int_{\frac{1}{2e} < |\bx| < \frac{1}{e}} \varphi_{\alpha} (|\bx|)^2 \dd \bx + c_{\alpha, k} = \\
    = \omega_k \eps^k \frac{(-\log(\eps))^{\alpha - 1}}{\eps^k} + c_{\alpha, k} ((- \log(\eps))^{\alpha} - (- \log(c\eps))^{\alpha}) + c_{\alpha, k} (1 + \log(2)).
\end{multline*}
Dividing both sides by $V_{\alpha, \eps}(r) = c_{\alpha, k} (-\log(\eps))^{\alpha}$ and picking $\eps_0$ small enough we establish \eqref{VEquiv}.

Now equations \eqref{sphIntEq} and \eqref{VEquiv} together imply \eqref{mainSph}.

We organize the rest of the proof of Proposition \ref{prop:singEst} in a sequence of Lemmata estimating integrals over domains mentioned in \ref{balls}, \ref{difference}, and \ref{outer}.

\begin{lemma}
    For every $\alpha_0 > 0$, and $\delta > 0$ there exists $r_0 > 0$ and $\eps_0 > 0$ such that for every $0 < \alpha < \alpha_0$, $0 < \eps < \eps_0$, and every $0 < r < r_0$ we have
    \begin{equation} \label{ballsIneq}
        \int_{B_{r/2} (0) \cup B_{r/2} (\br)} \varphi_{\alpha, \eps} (|\bx|) \varphi_{\alpha, \eps} (|\bx - \br|) \dd \bx < \frac{\delta}{10} V_{\alpha, \eps} (r).
    \end{equation}
\end{lemma}

\begin{proof}
    Due to the central symmetry of $\varphi_{\alpha, \eps} (|\bx|)$ it will be enough to estimate just the integral over $B_{r/2} (0)$. If $r < \eps$ as we have seen before 
    \begin{equation*}
        \int_{B_{r/2} (0)} \varphi_{\alpha, \eps} (|\bx|) \varphi_{\alpha, \eps} (|\bx - \br|) \dd \bx < \int_{|\bx| < \eps} \varphi_{\alpha, \eps} (\eps)^2 \dd \bx = \omega_k (-\log(\eps))^{\alpha - 1}
    \end{equation*}
    and for small enough $\eps_0$ the estimate \eqref{ballsIneq} follows. 
    
    If $r \ge \eps$ the desired inequality will follow from the uniform in $\alpha \in (0, \alpha_0)$ convergence 
    \begin{equation*}
        \frac{\int_{|\bx| < r/2} \varphi_{\alpha} (|\bx|) \varphi_{\alpha} (|\br - \bx|) \dd \bx}{(- \log(r))^{\alpha}} \to_{r \to 0} 0.
    \end{equation*}
    To prove said convergence we start with the following computation:
    \begin{multline} \label{ineq:ball}
        \int_{|\bx| < r/2} \varphi_{\alpha} (|\bx|) \varphi_{\alpha} (|\br - \bx|) \dd \bx < \varphi_{\alpha} (r/2) \int_{|\bx| < r/2} \varphi_{\alpha} (|\bx|) \dd \bx = \\ 
        = k \omega_k \frac{(- \log(r/2))^{\frac{\alpha - 1}{2}}}{(r/2)^{k/2}} \int_{0}^{r/2} \frac{(- \log(t))^{\frac{\alpha - 1}{2}}}{t^{k/2}} t^{k - 1} \dd t.
    \end{multline}

    We will focus on the last integral. If $\alpha \le 1$ then 
    \begin{equation*}
        \int_{0}^{r/2} (- \log(t))^{\frac{\alpha - 1}{2}} t^{k/2 - 1} \dd t \le \int_{0}^{r/2} t^{k/2 - 1} \dd t = \frac{2}{k} \left( \frac{r}{2} \right)^{k/2}.
    \end{equation*}
    Combining that with formula (\ref{ineq:ball}) we get
    \begin{equation*}
        \frac{\int_{|\bx| < r/2} \varphi_{\alpha} (|\bx|) \varphi_{\alpha} (|\br - \bx|) \dd \bx}{(-\log(r))^{\alpha}} < 2 \omega_k \frac{(- \log(r/2))^{\frac{\alpha - 1}{2}}}{(- \log(r))^{\alpha}}
    \end{equation*}
    and uniform convergence follows.

    If $\alpha > 1$ after taking the last integral in formula (\ref{ineq:ball}) by parts we obtain:
    \begin{multline} \label{eq:intByParts}
        \int_{0}^{r/2} (- \log(t))^{\frac{\alpha - 1}{2}} t^{k/2 - 1} \dd t = \\
        = \left. \frac{2 (- \log(t))^{\frac{\alpha - 1}{2}} t^{k/2}}{k} \right|_{0}^{r/2} + \frac{\alpha - 1}{k} \int_{0}^{r/2} (- \log(t))^{\frac{\alpha - 3}{2}} t^{k/2 - 1} \dd t.
    \end{multline}
    Since $r$ goes to zero we can assume that $- \log(r/2) > 2 (\alpha_0 - 1)/k$. In that case we can write
    \begin{multline*}
        \frac{\alpha - 1}{k} \int_{0}^{r/2} (- \log(t))^{\frac{\alpha - 3}{2}} t^{k/2 - 1} \dd t = \\
        = \frac{\alpha - 1}{k} \int_{0}^{r/2} (- \log(t))^{\frac{\alpha - 3}{2}} (- \log(t)) ( - \log(t))^{-1} t^{k/2 - 1} \dd t < \\
        < \frac{\alpha - 1}{k} (-\log(r/2))^{-1} \int_{0}^{r/2} (- \log(t))^{\frac{\alpha - 1}{2}} t^{k/2 - 1} \dd t < \\
        < \frac{1}{2} \int_{0}^{r/2} (- \log(t))^{\frac{\alpha - 1}{2}} t^{k/2 - 1} \dd t.
    \end{multline*}
    Together with formula (\ref{eq:intByParts}) the last inequality implies 
    \begin{equation*}
        \int_{0}^{r/2} (- \log(t))^{\frac{\alpha - 1}{2}} t^{k/2 - 1} \dd t < \frac{4}{k} \left( - \log \left( \frac{r}{2} \right)\right)^{\frac{\alpha - 1}{2}} \left( \frac{r}{2} \right)^{k/2}
    \end{equation*}
    and the uniform convergence follows.
\end{proof}

\begin{lemma}
    For every $\alpha_0 > 0$, $\delta > 0$, and $c > 0$ there exists $r_0 > 0$ and $\eps_0>0$ such that for every $0 < \alpha < \alpha_0$, $0 < \eps < \eps_0$, and every $0 < r < r_0$ we have
    \begin{equation} \label{diffIneq}
        \int_{B_{cr}(0) \setminus (B_{r/2} (0) \cup B_{r/2} (\br))} \varphi_{\alpha, \eps} (|\bx|) \varphi_{\alpha, \eps} (|\bx - \br|) \dd \bx < \frac{\delta}{10} V_{\alpha, \eps} (r).
    \end{equation}
\end{lemma}

\begin{proof}
    Let us denote the domain of integration by $D$:
    \begin{equation*}
        D = B_{cr} (0) \setminus (B_{r/2} (0) \cup B_{r/2} (\br)).
    \end{equation*}
    Using Cauchy-Schwarz inequality we conclude that
    \begin{equation*}
        \int_{D} \varphi_{\alpha, \eps} (|\bx|) \varphi_{\alpha, \eps} (|\bx - \br|) \dd  \bx \le \sqrt{ \int_{D} \varphi_{\alpha, \eps} (|\bx|)^2 \dd  \bx \int_{D} \varphi_{\alpha, \eps} (|\bx - \br|)^2 \dd  \bx}. 
    \end{equation*}
    By increasing domains of both integrals we arrive to
    \begin{equation*}
        \int_{D} \varphi_{\alpha, \eps} (|\bx|) \varphi_{\alpha, \eps} (|\bx - \br|) \dd \bx \le \int_{r/2 < |\bx| < 2cr} \varphi_{\alpha, \eps} (|\bx|)^2 \dd \bx.
    \end{equation*}
    Now we have to consider two cases, depending on the relation of $r$ and $\eps$. We treat these cases similarly to our proof of estimate \eqref{VEquiv}. Assume that $r \ge \eps$. Then by Lemma \ref{lm:partInt} we have
    \begin{equation*}
        \int_{r/2 < |\bx| < 2cr} \varphi_{\alpha, \eps} (|\bx|)^2 \dd \bx \le c_{\alpha, k} ((-\log(r/2))^{\alpha} - (-\log(cr))^{\alpha}).
    \end{equation*}
    Choosing $r_0$ small enough we guarantee inequality \eqref{diffIneq}.

    If $r < \eps$ we have 
    \begin{equation*}
        \int_{r/2 < |\bx| < 2cr} \varphi_{\alpha, \eps} (|\bx|)^2 \dd \bx < \omega_k \eps^k \frac{|\log(\eps)|^{\alpha - 1}}{\eps^k} + \int_{\eps < |\bx| < 2c\eps} \varphi_{\alpha, \eps} (|\bx|)^2 \dd \bx,
    \end{equation*}
    and choosing $\eps_0$ small enough we establish \eqref{diffIneq}.

\end{proof}

To conclude the proof of Proposition \ref{prop:singEst} it remains to notice that the integral over domain \ref{outer} is uniformly bounded. 

\end{proof}

Finally, using Proposition \ref{prop:singEst} we are able to prove Proposition \ref{prop:UAEps}

\begin{proof}[Proof of Proposition \ref{prop:UAEps}]

    \begin{enumerate}

        \item[\textbf{(I)}] First, notice that the function $\varphi_{\alpha, \eps}(r)$ is non\-increasing. Using that fact Part \textbf{(I)} follows from the definition of $U_{\alpha, \eps}$, for details see \cite[Lemma 6.6]{GKM}.

        \item[\textbf{(II)}] Choosing $\delta = 1$ in Proposition \ref{prop:singEst} we obtain inequality (\ref{ineq:UB}).

        \item[\textbf{(III)}] Choosing $\delta = 1$ in Proposition \ref{prop:singEst} we obtain inequality (\ref{ineq:LB}).

        \item[\textbf{(IV)}] Thanks to Proposition \ref{prop:singEst} it is enough to prove inequality (\ref{UOneStep}) for $V_{\alpha, \eps} (r_1)$ and $V_{\alpha, \eps} (r_2)$, which easily follows from the definition.
    \end{enumerate}

\end{proof}

\section{Joint regularity of two random measures} \label{appendix:B}

In this section we present a tailored version of Theorem \ref{thm:mainLip-3} that we intend to use in order to prove Central Limit Theorem for non-stationary products of random $\SL(2, \R)$ matrices with optimal assumption on the the distributions' tails (see \cite{GKM2}). Namely, the following result allows us to control the probability that two points independently generated by iterating two random dynamical systems will end up close to one another.

\begin{theorem} \label{thm:forCLT}
    Let $K$ be a compact set (with respect to weak-* topology) in the space of Borel probability measures on $\Homeo(M)$, satisfying the following assumptions:
    \begin{itemize}
        \item For every $\mgr \in K$ we have $\supp(\mgr) \subset \Lip(M)$.
        \item 
        There exist $\alpha > 0$ and $C_0$ such that for every $\mgr \in K$
        \begin{equation} \label{CLT_const}
            \int_{\Lip(M)} \left( \log(\fL(f)) \right)^{\alpha} \; d\mgr(f) < C_0.
        \end{equation}
        \item For every $\mgr \in K$ there are no measures $m_1, m_2 \in \mM$, such that $f_*m_1=m_2$ for $\mgr$-a.e.~$f$.
    \end{itemize}
    Then there exist $C$ and $\kappa > 1$ such that for any initial measures $\msp_0^{(1)}, \msp_0^{(2)}$, every number of iterations $n_1, n_2 \in \N$, and every choice of two sequences of measures $\mgr_1^{(1)}, \mgr_2^{(1)}, \ldots, \mgr_{n_1}^{(1)} \in K$ and $\mgr_1^{(2)}, \mgr_2^{(2)}, \ldots, \mgr_{n_2}^{(2)} \in K$ one has:
    \begin{equation} \label{CLT_est}
        \iint_{M \times M} |\log \left(\max\{d(x, y), r\}\right)|^{\alpha} \dd \msp_1(x) \dd \msp_2(y) < C,
    \end{equation}
    where $r = e^{-\kappa^{\min(n_1, n_2)}}$, $\msp_1 = \mgr_{n_1}^{(1)} * \mgr_{n_1 - 1}^{(1)} * \ldots * \mgr_1^{(1)} * \msp_0^{(1)}$, and $\msp_2 = \mgr_{n_2}^{(2)} * \mgr_{n_2 - 1}^{(2)} * \ldots * \mgr_{1}^{(2)} * \msp_0^{(2)}$.

\end{theorem}

\begin{proof}

    For this proof let us fix $k = \dim(M)$ and $\alpha > 0$ from inequality \eqref{CLT_const}. According to Part \textbf{(III)} of Proposition \ref{prop:UAEps} there exists $\eps_0 > 0$ and $C_U$ such that for any $\eps < r < \eps_0$ we have
    \begin{equation*}
        U_{\alpha, \eps}(r) > C_U |\log(r)|^{\alpha}.
    \end{equation*}
    Fix $\kappa = \lambda^{-\frac{1}{\alpha}}$, some $r > e^{-\kappa^{\min(n_1, n_2)}}$, and measures $\msp_1, \msp_2$ from the statement. In order to prove \eqref{CLT_est} it is enough to show that for some constant $\tilde{C}$ we have
    \begin{equation*}
        \iint_{M \times M} U_{\alpha, r}(d(x, y)) \dd \msp_1(x) \dd \msp_2(y) < \tilde{C}.
    \end{equation*}
    Notice that according to the Definition \ref{def:En}:
    \begin{multline*}
        \En_{\alpha, r} \left( \frac{\msp_1 + \msp_2}{2} \right) = \iint_{M \times M} U_{\alpha, r}(d(x, y)) \dd \frac{\msp_1 + \msp_2}{2}(x) \dd \frac{\msp_1 + \msp_2}{2}(y) \ge \\ 
        \ge \frac{1}{2}\iint_{M \times M} U_{\alpha, r}(d(x, y)) \dd \msp_1(x) \dd \msp_2(y),
    \end{multline*}
    so all we need is to come up with a constant upper bound for $\En_{\alpha, r} \left( \frac{\msp_1 + \msp_2}{2} \right)$. Thanks to Proposition \ref{prop:EnEquiv} it is sufficient to estimate $\tEn_{\alpha, r} \left( \frac{\msp_1 + \msp_2}{2} \right)$. Now we are able to employ a geometric inequality in $L^2(M, \Leb)$ using vectors $\rho_{\alpha, r} [\msp_1]$ and $\rho_{\alpha, r} [\msp_2]$. Namely, recall that due to the Definition \ref{def:tEn} we have
    \begin{multline*}
        \tEn_{\alpha, r} \left( \frac{\msp_1 + \msp_2}{2} \right) = \left\| \frac{\rho_{\alpha, r} [\msp_1] + \rho_{\alpha, r} [\msp_2]}{2} \right\|_{L^2(M)}^2 \le \\ 
        \le \frac{\|\rho_{\alpha, r} [\msp_1]\|_{L^2(M)}^2 + \|\rho_{\alpha, r} [\msp_1]\|_{L^2(M)}^2}{2} = \frac{1}{2} (\tEn_{\alpha, r} (\msp_1) + \tEn_{\alpha, r} (\msp_2)).
    \end{multline*}
    Using Proposition \ref{prop:EnEquiv} one more time we conclude that for big enough energies we have 
    \begin{equation*}
        \frac{1}{2} (\tEn_{\alpha, r} (\msp_1) + \tEn_{\alpha, r} (\msp_2)) \le \En_{\alpha, r} (\msp_1) + \En_{\alpha, r} (\msp_2).
    \end{equation*}
    Starting from here we are mimicking the end of the proof of Theorem \ref{thm:mainHol-3}. Applying Corollary \ref{Lip:EnUniv} we arrive to 
    \begin{equation*}
        \En_{\alpha, r} (\msp_1) \le \max \left( \lambda^{n_1} \En_{\alpha, r} \left( \msp_0^{(1)} \right), \tC \right) \quad \text{and} \quad \En_{\alpha, r} (\msp_2) \le \max \left( \lambda^{n_2} \En_{\alpha, r} \left( \msp_0^{(2)} \right), \tC \right).
    \end{equation*}
    From Lemma \ref{lm:EnUpperBound} we know that
    \begin{equation*}
        \max \left( \En_{\alpha, r} \left( \msp_0^{(1)} \right), \En_{\alpha, r} \left( \msp_0^{(2)} \right) \right) < (\omega_k + c_{\alpha, k}) (- \log(r))^{\alpha},
    \end{equation*}
    so since 
    \begin{equation*}
        r > e^{-\kappa^{\min(n_1, n_2)}} = e^{-\lambda^{- \frac{\min(n_1, n_2)}{\alpha}}}    
    \end{equation*}
    we have 
    \begin{equation*}
        \En_{\alpha, r} (\msp_1) + \En_{\alpha, r} (\msp_2) < \max \left( 2 (\omega_k + c_{\alpha, k}), \tC \right).
    \end{equation*}
    From here Theorem \ref{thm:forCLT} follows. 

\end{proof}

\section*{Acknowledgements}

The author is grateful to A. Gorodetski and V. Kleptsyn for fruitful discussions and to anonymous reviewers for their advice on improving both contents and presentation of the paper. The author was supported in part by NSF grant DMS--2247966 (PI: A.\,Gorodetski).

\end{document}